\numberwithin{equation}{section}
\theoremstyle{remark}
\numberwithin{equation}{section}
\theoremstyle{plain}
\newtheorem{thm}{Theorem}[section]
\newtheorem{remark}{Remark}[section]
\newtheorem{lem}[thm]{Lemma}
\newtheorem{cor}{Corollary}
\theoremstyle{definition}
\newtheorem{ass}{Assumption}[section]
\newcommand {\BB} {{\mathcal B}}
\newcommand {\MM} {{\mathcal M}}
\newcommand {\XX} {{\mathcal X}}
\newcommand {\rr} {{\mathbb R}}
\newcommand {\RR} {{\mathcal R}}
\newcommand {\UU} {{\mathcal U}}
\begin{document}

\begin{frontmatter}
\title{An iterative hard thresholding estimator for low rank matrix recovery with explicit limiting distribution}
\runtitle{An iterative hard thresholding estimator for low rank matrix}

\begin{aug}
\author{\fnms{Alexandra} \snm{Carpentier\thanksref{a1,e1}}\ead[label=e1,mark]{carpentier@math.uni-potsdam.de}}
\and
\author{\fnms{Arlene K. H.} \snm{Kim\thanksref{a,e2}}\corref{}\ead[label=e2,mark]{a.kim@statslab.cam.ac.uk}}
\address[a1]{Institut f\"ur Mathematik, Universit\"at Potsdam, Am Neuen Palais 10 \break 14469 Potsdam, Germany.\\
\printead{e1}}
\address[a]{Statistical Laboratory, Centre of mathematical sciences, Wilberforce Road, \break Cambridge, CB3 0WB, UK.\\
\printead{e2}}

\runauthor{Carpentier and Kim}

\affiliation{University of Cambridge and University of Cambridge}

\end{aug}

\begin{abstract}
We consider the problem of low rank matrix recovery in a stochastically noisy high dimensional setting. We propose a new estimator for the low rank matrix, based on the iterative hard thresholding method, and that is computationally efficient and simple. We prove that our estimator is optimal both in terms of the Frobenius risk, and in terms of the operator norm risk, i.e.~in terms of the entry-wise risk uniformly over any change of orthonormal basis. This result allows us to provide the limiting distribution of the estimator. In the case where the design is Gaussian, we prove that the entry-wise bias  of the limiting distribution of the estimator is small, which is of great interest for constructing tests and confidence sets for low dimensional subsets of entries of the low rank matrix.
\end{abstract}
\begin{keyword}
\kwd{low rank matrix recovery}
\kwd{high dimensional statistical inference}
\kwd{inverse problem}
\kwd{numerical methods}
\kwd{limiting distribution}
\kwd{uncertainty quantification}
\end{keyword}
\end{frontmatter}

\section{Introduction}

High-dimensional data have generated a great challenge in different fields of statistics, computer science, and machine learning.
In order to consider cases where the number of covariates is larger than the sample size, new methodologies, applicable for the model under some structural constraints, have been developed.
For instance, there have been substantial works under the sparsity assumption including sparse linear regression, sparse covariance matrices estimation or sparse inverse covariance matrices estimation \citep[see e.g.][]{meinshausen2006, bickel2009, huang2008, friedman2008, cai2012}. In this paper, we focus on the problem of \textit{low rank matrix recovery and uncertainty quantification}.

There have been quite a few work on estimating low rank matrices in the matrix regression setting (also named the trace regression setting, the matrix compressed sensing setting, or the quantum tomography setting when the parameter is a density matrix). Many authors \citep[e.g.][]{candes2009, candes2010, recht2011, gross2011} considered the exact recovery of a low-rank matrix based on
a subset of uniformly sampled entries. Also \cite{recht2011, candes2011, FGLE12, GLFBE10, liu} considered matrix recovery based on a small number of noisy linear measurements in the framework of Restricted Isometry Property (RIP). 
\cite{negahban2011} proved non-asymptotic bounds on the Frobenius risk, and investigated
matrix completion under a row/column weighted random sampling.
 \cite{koltchinskii2011} proposed a nuclear norm minimisation method and derived
a general sharp oracle inequality under the condition of restricted isometry property.
Very recently, \cite{cai2015} considered a rank-one projection model and used constrained nuclear norm minimization method to estimate the matrix. 
\cite{FGLE12, GLFBE10} considered a specific quantum tomography problem where the parameter is a density matrix (for more details, plasase see Subsection \ref{ss:q}), and \cite{liu} proved that the quantum tomography design setting satisfies the RIP. In addition, \cite{kol} proposed an estimator based on an entropy minimisation for solving a quantum tomography problem.

\cite{goldfarb2011, tanner2012} adapt the iterative hard thresholding method \citep[first introduced in the sparse linear regression setting, see e.g.~][]{needell2009,blumensath2009} to the problem of low rank matrix recovery in the case where the noise is non-stochastic and of small $L_2$ norm. This procedure has the advantage of being very computationally efficient. In the same vein but applied to the more challenging stochastically noisy setting, \cite{agarwal2012} introduced a soft thresholding technique that provides efficient result in this setting in Frobenius norm, see also~\cite{bunea2011optimal,chen2015fast,klopp2015matrix} for other thresholding methods in related settings that provide results also in Frobenius norm.

Another important problem is on understanding the uncertainty associated to these statistical methodologies, by e.g.~characterizing the limiting distribution of the efficient estimators. Yet results in this area for high dimensional models are still scarce, available mainly for the sparse (generalised) linear regression models \citep{zhang,javanmard, vandegeer2014, nickl}. In the papers~\citep{zhang,javanmard, vandegeer2014}, the authors focus first on constructing an estimator for the sparse parameter that has good properties in $L_{\infty}$ risk, and they use then this result to exhibit the limiting distribution of their estimator. Knowing this limiting distribution immediately enables the construction of tests and confidence sets for low dimensional subsets of parameters.

A similar achievement, i.e.~the construction of an estimator that has an explicit limiting distribution, does not exist in the low rank matrix recovery setting. To the best of our knowledge, moreover, all the theoretical results from the above papers on the estimation of the parameter in the noisy setting are derived in Frobenius risk---neither in the entrywise matrix $L_\infty$ risk, nor in the operator norm (i.e.~the largest singular value). 


In our paper, we consider the problem of constructing an estimator for low-rank matrix in a stochastically noisy high-dimensional setting, under the assumption that a RIP-type isometry condition is satisfied (see Assumption~\ref{ass:designbis}). 
We provide (in~Theorem \ref{th:mainthm2}) error bounds for our estimator in all $p$ Schatten norms for $p>0$. We prove in particular that this estimator has optimal Frobenius and operator norm risk by proving that this estimator has optimal $L_\infty$ risk performance uniformly over any change of orthonormal basis. In addition, a slight modification of our estimator has an explicit Gaussian limiting distribution with bounded bias in operator norm (see~Theorem \ref{thm:asymnorm2}); and in the particular case when the design consists in uncorrelated Gaussian entries, we prove that the bias in $L_{\infty}$ entry wise norm is bounded as well, which is immediately useful for testing hypotheses and constructing confidence intervals for each parameter of interest, similar to the ideas in~\cite{zhang,javanmard, vandegeer2014}. Moreover our estimator is computationally efficient with an explicit algorithm. The proposed  algorithm is inspired by the iterative hard thresholding, that refines its estimation of the matrix by iteratively estimating the low rank sub-space where the matrix's image is defined. It requires only $O(\log n)$ iteration steps to converge approximately, and the computational complexity of the method is of order $O(n d^2 \log n)$ where $d$ is the dimension of the matrix, and $n$ is the sample size. 

In the experiment section we first provide some simulations where we illustrate the efficiency of our method and explain how it can be used to create a confidence interval for the entries of the low rank matrix. We then apply our method to a specific \textit{quantum tomography application}, namely multiple ion tomography~\citep[see, e.g.][]{Guta, GLFBE10,butucea2015spectral, Blatt, acharya2015efficient, holevo2001statistical,nielsen}, where the assumptions required by our method are naturally satisfied~\citep[see e.g.][]{liu, FGLE12}. Finally we compare our method with other existing estimation methods for the trace regression setting~\citep{candes2010, GLFBE10, koltchinskii2011,FGLE12} using the gradient descent implementation of~\cite{agarwal2012} and  also regularized  maximum likelihood based procedures~\citep{butucea2015spectral, acharya2015efficient}. 


As a complement in the Supplementary Material, we adapt our method to the setting of sparse linear regressionm and provide an estimator that has an explicit limiting distribution (recovering the results of \cite{zhang,javanmard, vandegeer2014}).

\section{Setting}

\subsection{Preliminary notations}

For $T>0$, $q \in \mathbb{N}$ and $u \in \mathbb C^q$, we write $\lfloor u\rfloor_{T}$ for the hard thresholded version of $u$ at level $T$, i.e.~for the vector $v$ such that $v_i = u_i \mathbf 1 \{|u_i| \geq T\}$ for $i=1, \ldots, q$. For $q>0$ and $u \in \mathbb R^q$, we write $\|u\|_2 = \sqrt{\sum_{i \leq q} |u_i|^2}$ for the standard $L_2$ norm of $u$, and $\|u\|_{\infty} = \sup_i |u_i|$ for the standard $L_{\infty}$ norm of $u$.

For a $q \times q$ complex matrix $A$, we write $A^T$ as the conjugate transpose of~$A$. We write $\mathrm{tr}(A) = \sum_{k}A_{k,k}$ for the trace of $A$, and $\text{diag}(A)$ for the matrix whose diagonal entries are the same as $A$ while its non-diagonal entries are all zeros. We write the entry-wise matrix norm of $A$ as 
$\|A\|_\infty = \max_{i,j} |A_{i,j}|$, and its squared Frobenius norm as 
$\|A\|_2^2 = \sum_{i,j} A_{i,j}^2$.
We write also the operator norm of $A$ as
$\|A\|_{S} = \max_{i} \lambda_i$, where the $\lambda_i$ are the singular values of $A$, and the Schatten $p$ norm of $A$ for $p > 1$ as $\|A\|_{S_p} = \Big(\sum_{i} \lambda_i^p\Big)^{1/p}$ - and note that $\|A\|_{S_2} = \|A\|_{2}$.

For $T>0$, we write $\lfloor A\rfloor_{T}$ for the hard thresholded version of $A$ at level $T$ for each entry, i.e.~for the matrix $V$ such that $V_{i,j} = A_{i,j} \mathbf 1 \{|A_{i,j}| \geq T\}$ for $i,j=1,\ldots,q$.

\subsection{Model}

Let $d,n \in \mathbb{N}$. Let $\mathcal M$ be the set of $d \times d$ matrices, and 
$$\MM(k),$$
be the set of $d \times d$ complex matrices of rank less than or equal to $k$. Let us also write
$$\MM_{\Omega},$$
for the set of orthonormal matrices in $\MM$.

For $X^i \in \MM, \Theta \in \mathcal M$, we consider the matrix regression problem  where for any $i \leq n$,
\begin{equation*}
Y_i=\mathrm{tr}\big((X^i)^T \Theta\big) + \epsilon_i,
\end{equation*}
where $\epsilon \in \mathbb R^n$ is an i.i.d.~vector of Gaussian white noise, i.e.~$\epsilon \sim \mathcal N(0, I_n)$ (but our results hold in the same way for any sub-Gaussian independent noise $\epsilon$: see Remark~\ref{rem:noise}), and $d \leq n$ but $d^2 \gg n$. Let us write $\mathbb X$ for the linear operator going from $\MM$ to $\mathbb R^n$, and such that for any $A \in \MM$,
$$\mathbb X(A) = \Big( \mathrm{tr}\big((X^i)^T A \big) \Big)_{i \leq n}.$$
The model can be rewritten as
$$Y = \mathbb X(\Theta) + \epsilon,$$
where $Y = (Y_i)_{i \leq n}$. This matrix regression model is directly related to the quantum tomography model (in which case the design $\mathbb X$ is often chosen to be the random Pauli design~\citep{FGLE12, GLFBE10, liu, gross2011,kol}, but it is also  related to e.g.~matrix completion~\citep{negahban2011,kol}.

We state the following assumption on the design operator $\mathbb X$.
\begin{ass}\label{ass:designbis}
Let $K \leq d$. For any $k\leq 2K$, it holds that
\begin{align*}
\sup_{A \in \MM(k)}\Big|  \frac{1}{n}\|\mathbb X(A)\|_2^2 - \|A\|_2^2 \Big| \leq \tilde c_n(k)\|A\|_2^2,
\end{align*}
where $\tilde c_n(k)>0$.
\end{ass}
\begin{remark}\label{rem:gaussian}
The above assumption is very related to the Restricted Isometry Property. Typically, for uncorrelated Gaussian design with mean $0$ and variance $1$ entries, it will hold with probability larger than $1-\delta$ for $\tilde c_n(k) \leq C \sqrt{kd\log(1/\delta)/n}$ where $C>0$ is a universal constant. For the Pauli design used in quantum tomography, it will hold with probability larger than $1-\delta$ for $\tilde c_n(k) \leq C \sqrt{kd\log(d/\delta)/n}$ where $C>0$ is a universal constant~\citep{liu} - see Subsection~\ref{ss:q} for a description of description of a quantum tomography setting in which the Pauli matrices represent measurements. 
\end{remark}

\section{Main results}
As a generalization of sparsity constraints in linear regression models, we impose a rank $k \leq d$ constraint on a matrix $\Theta \in \rr^{d \times d}$. That is, we require the rows (or columns) of $\Theta$ lie in some $k$-dimensional subspace of $\rr^d$. This type of rank constraint arises in numerous applications such as quantum tomography, matrix completion, and matrix compressed sensing \citep[see e.g.][]{FGLE12, GLFBE10, liu, gross2011,negahban2011,koltchinskii2011}.

\subsection{Method}

Our method considers the parameters $B>0, \delta>0, K>0$. The parameter $\delta$ is a small probability that will calibrate the precision of the estimate: the theoretical results that we will prove later for this estimate will hold with probability $1-\delta$, and the smaller $\delta$, the larger the constant in the bound (see Theorem~\ref{th:mainthm2}). The parameter $K$ is an upper bound on two times the actual low rank of the parameter $\Theta$. It does not need to be tight, and the final results will not  depend on it as long $\sqrt{K}\tilde c_n(K) \ll 1$ (see Assumption~\ref{ass:designbis} and Theorem~\ref{th:mainthm2}). The parameter $B$ is an upper bound on the Frobenius norm of the parameter $\Theta$. It again does not need to be tight, but constants in the proof will scale with it.

We set the initial values for the estimator $\hat \Theta^0$ and the threshold $T_0$ such that
$$\hat \Theta^0 =0 \in \rr^{d\times d}, \ \ \ T_0=B \in \rr^+.$$

We update the thresholds $$T_r = 4 \tilde c_n(2K) \sqrt{K} T_{r-1} + \upsilon_n := \rho T_{r-1} + \upsilon_n.$$
where $\upsilon_n = C\sqrt{d\frac{\log(1/\delta)}{n}}$, $C$ is an universal constant (see Lemma \ref{helo}) and $\rho := 4 \tilde c_n(2K) \sqrt{K}$.

Set now recursively, for $r \in \mathbb N$, $r\geq 1$,
$$\hat \Psi^r =\frac{1}{n} \sum_{i=1}^n (X^i)^T\big(Y_i - \mathrm{tr}(X^i\hat \Theta^{r-1})\big) \in \rr^{d\times d},$$
and let $U^{r},V^{r} \in \MM_{\Omega}^2$ be two orthonormal matrices that diagonalise $\hat \Theta^{r-1} + \hat \Psi^r$.
Then we set
\begin{equation}\label{eq:estimator2}
\hat \Theta^r =  U^{r} \lfloor (U^{r})^T (\hat \Theta^{r-1} +\hat \Psi^r ) V^{r} \rfloor_{T_r} (V^{r})^T.
\end{equation}
This procedure provides a sequence of estimates, and as we will prove in the next subsection, this sequence is with high probability close to the true $\Theta$ as soon as $r$ is of order $\log(n)$ (see Theorems~\ref{th:mainthm2} and~\ref{thm:asymnorm2}).

\begin{remark}\label{rem:para}
Note that although we describe this method using many quantities, 
in fact while implementing our method we only need to set up four quantities: $\rho, \upsilon_n, T_0$ and the stopping time $r$. We describe in Equation~\eqref{eq:sr} how to implement a good stopping rule, and in Subsection~\ref{ss:disc} how to choose the three first parameters. In particular, $T_0$ can be chosen in a data driven way.
\end{remark}

This method is related to Iterative Hard Thresholding (IHT), a method that has been developed for the sparse regression setting \citep[see e.g.][]{blumensath2009, needell2009}. It is less straightforward to see this in this setting, as in the sparse regression setting where we adapt also our method in Subsection~\ref{ss:sr}, and for a more comprehensive discussion of the relation between our method and IHT, see the Remark~\ref{rem:iht}. Note that IHT algorithms have been proved to work in settings where the noise is small and non-stochastic \citep[see e.g.][]{blumensath2009, needell2009,goldfarb2011, tanner2012}, but to the best of our knowledge, there are no results on IHT in a stochastically noisy setting.


\subsection{Results for the low rank matrix recovery}

\paragraph{Main result for our thresholded estimator} We now provide a theorem that guarantees that the estimate $\hat \Theta^r$ after $O(\log(n))$ iterations has at most rank $k$, and its entry-wise $L_\infty$ risk and Frobenius risk are bounded with the optimal rates (see the first point in Subsection 3.3).
\begin{thm}\label{th:mainthm2}
Assume that Assumption~\ref{ass:designbis} is satisfied and that $\tilde c_n(2K) \sqrt{K} < 1/4$. Let $r \approx O(\log(n))$. We have that for a constant $C_1>0$ it holds that with probability larger than $1-\delta$ and for any $k \leq K/2$
\begin{equation*}
\sup_{\Theta \in \MM(k), \|\Theta\|_{2} \leq B} \| \Theta  - \hat \Theta^r\|_S \leq  C_1 \sqrt{\frac{d\log(1/\delta) }{n}},
\end{equation*}
and also that
$$\sup_{\Theta \in \MM(k), \|\Theta\|_{2} \leq B} \mathrm{rank}(\hat \Theta^r) \leq k,$$
and also that for any $p >0$
$$\sup_{\Theta \in \MM(k), \|\Theta\|_{2} \leq B} \| \Theta - \hat \Theta^r\|_{S_p} \leq  C_1 k^{1/p} \sqrt{\frac{d\log(1/\delta) }{n}}.$$
\end{thm}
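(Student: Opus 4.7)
The plan is to prove all three conclusions simultaneously by induction on the iteration index $r$, with the joint induction hypothesis
\[
\|\hat\Theta^{r-1}-\Theta\|_{S}\leq 2T_{r-1} \quad\text{and}\quad \mathrm{rank}(\hat\Theta^{r-1})\leq k.
\]
The base case $r=0$ is immediate because $\hat\Theta^0=0$ has rank zero and $\|\Theta\|_S\leq\|\Theta\|_2\leq B=T_0$. Once the induction is closed, the linear recursion $T_r=\rho T_{r-1}+\upsilon_n$ with $\rho=4\tilde c_n(2K)\sqrt{K}<1$ gives $T_r\leq\rho^r B+\upsilon_n/(1-\rho)$, so after $r\asymp\log(B/\upsilon_n)=O(\log n)$ iterations we have $T_r=O(\upsilon_n)=O(\sqrt{d\log(1/\delta)/n})$; this delivers the operator-norm rate, and the rank bound is built into the hypothesis.

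The crux of the induction step is a sharp bound on the effective noise
\[
\xi^r\;:=\;\hat\Theta^{r-1}+\hat\Psi^r-\Theta\;=\;\Big(\tfrac{1}{n}\mathbb X^*\mathbb X-\mathrm{Id}\Big)(\Theta-\hat\Theta^{r-1})+\tfrac{1}{n}\mathbb X^*\epsilon,
\]
where $\mathbb X^*$ denotes the adjoint of $\mathbb X$. Lemma~\ref{helo} gives $\|\tfrac{1}{n}\mathbb X^*\epsilon\|_S\leq \upsilon_n$ on an event of probability $1-\delta$, and from here the argument is deterministic. Set $D=\Theta-\hat\Theta^{r-1}$, which has rank at most $2k\leq K$ by the induction hypothesis. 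For any unit-Frobenius rank-one test matrix $A=uv^T$ one has the adjoint identity
\[
\big\langle A,(\tfrac{1}{n}\mathbb X^*\mathbb X-\mathrm{Id})D\big\rangle\;=\;\tfrac{1}{n}\langle\mathbb X(A),\mathbb X(D)\rangle-\langle A,D\rangle,
\]
and the usual polarization of Assumption~\ref{ass:designbis} applied to the rank-$\leq 2K$ matrices $A\pm\lambda D$, optimized in $\lambda>0$, yields $|\langle A,\cdot\rangle|\leq \tilde c_n(2K)\|D\|_2$. Taking the supremum over $A$ and invoking the low-rank Frobenius-to-operator bound $\|D\|_2\leq\sqrt{2k}\|D\|_S\leq\sqrt{K}\|D\|_S$, then the induction hypothesis,
\[
\|\xi^r\|_S\;\leq\;\tilde c_n(2K)\sqrt{K}\cdot 2T_{r-1}+\upsilon_n\;=\;\tfrac{\rho}{2}T_{r-1}+\upsilon_n\;<\;T_r.
\]

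Given $\|\xi^r\|_S<T_r$, the thresholding step closes the induction. Writing the SVD $U^r\Sigma^r(V^r)^T=\hat\Theta^{r-1}+\hat\Psi^r=\Theta+\xi^r$, Weyl's inequality gives $|\sigma_i(\Sigma^r)-\sigma_i(\Theta)|\leq\|\xi^r\|_S<T_r$, so at most $k$ singular values of $\Sigma^r$ exceed $T_r$ and $\mathrm{rank}(\hat\Theta^r)\leq k$. Also $\hat\Theta^r-(\hat\Theta^{r-1}+\hat\Psi^r)=U^r(\lfloor\Sigma^r\rfloor_{T_r}-\Sigma^r)(V^r)^T$ is diagonal in the SVD basis with entries of modulus strictly less than $T_r$, hence of operator norm $<T_r$; combining,
\[
\|\hat\Theta^r-\Theta\|_S\;\leq\;\|\hat\Theta^r-(\hat\Theta^{r-1}+\hat\Psi^r)\|_S+\|\xi^r\|_S\;<\;2T_r,
\]
which closes the induction. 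The Schatten-$p$ bounds then follow from $\|A\|_{S_p}\leq(\mathrm{rank}\,A)^{1/p}\|A\|_S$ applied to $\hat\Theta^r-\Theta$ (whose rank is at most $2k$), producing the announced $k^{1/p}$ factor.

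I anticipate the main obstacle to be the polarization step of the second paragraph: Assumption~\ref{ass:designbis} is stated in Frobenius norm, but the thresholding argument insists on operator-norm control of the residual $(\tfrac{1}{n}\mathbb X^*\mathbb X-\mathrm{Id})D$ uniformly over rank-one test matrices. The $\sqrt{K}$ factor appearing in $\rho=4\tilde c_n(2K)\sqrt{K}$ is a direct consequence of this Frobenius-to-operator conversion, and the induction constant $2$ has to be calibrated precisely against it so that the bound $\|\xi^r\|_S<T_r$ is simultaneously tight enough to preserve the rank (via Weyl) and loose enough to propagate through the recursion.
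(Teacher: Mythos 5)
Your proof is correct and follows essentially the same route as the paper's: the same decomposition of the effective noise $\hat\Theta^{r-1}+\hat\Psi^r-\Theta$ into a restricted-isometry deviation term (controlled by polarizing Assumption~\ref{ass:designbis}) plus the stochastic term of Lemma~\ref{helo}, the same joint induction on the rank and a norm of the residual, and the same thresholding argument to close it. The only cosmetic differences are that you carry the operator norm of $\Theta-\hat\Theta^{r-1}$ through the induction rather than its Frobenius norm (equivalent via the rank bound $\|D\|_2\leq\sqrt{2k}\,\|D\|_S$) and that you invoke Weyl's inequality by name where the paper proves the needed singular-value comparison from scratch as Lemma~\ref{lem:mati}.
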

The above theorem proves among other things that our estimate attains the minimax optimal Schatten $p$ risk, which other estimates in the literature also attain for e.g.~$p=2$. The first interesting property is that our proposed estimator has an explicit algorithmic form and is very computationally efficient. Another interesting additional property is that it is also minimax-optimal in operator norm (or entry-wise matrix $L_{\infty}$ risk), and that the entry-wise error is not more than $\sqrt{d/n}$ with high probability \textit{for any orthonormal change of basis of the matrix $\Theta$}. This is a strong result since the entry-wise norm is not invariant by orthonormal change of basis while the Frobenius norm is. This result is already useful for measuring the uncertainty of an estimate (in particular since it does not require the a priori knowledge of the rank of the matrix $\Theta$). 

\paragraph{Asymptotic normality results} To prove asymptotic normality, we slightly modify the estimator defined in Theorem \ref{th:mainthm2}. Consider the estimator $\hat \Theta^r$ of Theorem~\ref{th:mainthm2} (with $r \approx O(\log(n))$) and define
$$\hat \Theta = \hat \Theta^r + \frac{1}{n}\sum_{i=1}^n (X^i)^T [Y_i  -  \mathrm{tr}( (X^i)^T\hat \Theta^r)].$$


\begin{thm}\label{thm:asymnorm2}
Set \begin{align*} 
Z &:=\frac{1}{\sqrt{n}} \sum_{i\leq n} (X^i)^T \epsilon_i \\
\Delta &:= \sqrt{n}(\hat \Theta^r - \Theta)  - \frac{1}{\sqrt{n}}\sum_{i\leq n} (X^i)^T\mathrm{tr}\big((X^i)^T(\hat \Theta^r - \Theta)\big).
\end{align*} 
Then we have
\begin{equation}\label{eq:split2}
\sqrt{n}(\hat \Theta - \Theta) = \Delta +Z,
\end{equation}
where $Z|\mathbb X \sim \mathcal N \Big(0,  \big(\frac{1}{n}\sum_{i \leq n} (X^i_{j,j'}X^i_{l,l'})\big)_{j,j',l,l'}\Big)$. 

Let $r \approx O(\log(n))$. The two following bounds hold for the bias term $\Delta$ under two different assumptions.
\begin{itemize}
\item Assume that Assumption~\ref{ass:designbis} is satisfied for some $K>0$ and that $\tilde c_n(2K) \sqrt{K} = o(1)$. If the rank of $\Theta$ is smaller than $2K$ and if its Frobenius norm is bounded by $B$, there is a constant $C_1>0$ such that with probability larger than $1-\delta$ 
$$\frac{\| \Delta \|_S}{\sqrt{d}} \leq 4C_1 \tilde c_n(2K) \sqrt{K}\log(1/\delta) = o_{\mathbb{P}} (1).$$
\item Assume that the elements in the design matrices $X^i \in \MM$ are i.i.d.~Gaussian with mean $0$ and variance $1$, and that $\max(K^2d, Kd\log(d)) = o(n)$, we have that
$$\| \Delta \|_\infty = o_{\mathbb{P}} (1).$$
Note that this implies the previous result.
\end{itemize}


\end{thm}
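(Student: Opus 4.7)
The first step is purely algebraic. Substituting $Y_i = \mathrm{tr}((X^i)^T \Theta) + \epsilon_i$ into the definition of $\hat\Theta$ and letting $W := \hat\Theta^r - \Theta$, multiplication by $\sqrt n$ yields directly the identity $\sqrt n(\hat\Theta - \Theta) = \sqrt n\, W - \frac{1}{\sqrt n}\sum_i (X^i)^T \mathrm{tr}((X^i)^T W) + \frac{1}{\sqrt n}\sum_i (X^i)^T \epsilon_i = \Delta + Z$. Conditional on $\mathbb X$, $Z$ is a linear functional of the Gaussian vector $\epsilon\sim\mathcal N(0,I_n)$, so $Z$ is centered Gaussian, and a direct covariance computation gives the stated formula $\mathrm{Cov}(Z_{j,j'},Z_{l,l'}\mid\mathbb X)=\frac1n\sum_i X^i_{j,j'}X^i_{l,l'}$.

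For the operator-norm bound (first bullet), I rewrite $\Delta = \sqrt n(I-\mathbb X^*\mathbb X/n)W$ and use $\|\Delta\|_S=\sup_{\|u\|=\|v\|=1}|u^T\Delta v|=\sqrt n\sup|\langle uv^T,(I-\mathbb X^*\mathbb X/n)W\rangle|$. The key tool is a bilinear form inequality derived from Assumption 2.1: by polarization and an optimal scaling in the magnitudes, for any $A,B$ with $\mathrm{rank}(A)+\mathrm{rank}(B)\le 2K$ one has $|\langle A,(I-\mathbb X^*\mathbb X/n)B\rangle|\le\tilde c_n(2K)\|A\|_2\|B\|_2$. Applied to $A=uv^T$ (rank one) and $B=W$ (rank at most $2k\le K$), this yields $|u^T\Delta v|\le\sqrt n\,\tilde c_n(2K)\|W\|_2$. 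Combining with the Theorem~3.1 Frobenius bound $\|W\|_2\le C_1\sqrt{Kd\log(1/\delta)/n}$ and dividing by $\sqrt d$ gives the claimed inequality, which is $o_\mathbb P(1)$ under the hypothesis $\tilde c_n(2K)\sqrt K=o(1)$.

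For the entrywise bound under the i.i.d.\ Gaussian design (second bullet), I would fix $(j,j')$ and decompose $X^i=\xi_i E_{j,j'}+\tilde X^i$ with $\xi_i:=X^i_{j,j'}$ independent of $\tilde X^i$, so that
\[
\Delta_{j,j'}=\frac{1}{\sqrt n}\sum_i(1-\xi_i^2)W_{j,j'}-\frac{1}{\sqrt n}\sum_i\xi_i\langle\tilde X^i,W\rangle.
\]
The first (diagonal) summand is the product of the sub-exponentially concentrated centered chi-square sum, which is $O(\sqrt{\log d})$ uniformly in $(j,j')$ after a union bound, and the entrywise bound $|W_{j,j'}|\le\|W\|_S=O(\sqrt{d/n})$ from Theorem~3.1; their product is $O(\sqrt{d\log d/n})=o(1)$ under $Kd\log d=o(n)$.

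The off-diagonal summand is the main obstacle since $W$ itself depends on $(\xi_i)_i$. I would decouple by introducing a leave-one-entry-out version $\hat\Theta^r_{(-j,j')}$ of the estimator, obtained by re-running the same IHT iterations after zeroing every $X^i_{j,j'}$, and setting $W_{(-j,j')}:=\hat\Theta^r_{(-j,j')}-\Theta$, which is independent of $\xi$. Splitting $\langle\tilde X^i,W\rangle=\langle\tilde X^i,W_{(-j,j')}\rangle+\langle\tilde X^i,W-W_{(-j,j')}\rangle$, the first part is, conditionally on $\tilde X$, a mean-zero Gaussian with variance $\frac1n\sum_i\langle\tilde X^i,W_{(-j,j')}\rangle^2$, controlled by $\|W_{(-j,j')}\|_2^2=O(Kd/n)$ through the restricted isometry applied to the low-rank matrix $W_{(-j,j')}$; a Gaussian tail plus a union bound over $(j,j')$ then yield $O(\sqrt{Kd\log d/n})=o(1)$. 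The remainder is bounded via Cauchy--Schwarz and the approximate isometry by a multiple of $\|W-W_{(-j,j')}\|_2$, and the main technical difficulty is to show that zeroing a single entry of each design matrix changes the IHT output by $o(1/\sqrt n)$ in Frobenius norm. I would prove this by induction on the iteration index $r$, using the fact that the one-step IHT map is contractive with rate $\rho=4\tilde c_n(2K)\sqrt K\ll 1$, so that after the $r=O(\log n)$ iterations prescribed by the algorithm any initial perturbation in $\hat\Psi^1$ is damped by a factor of $\rho^r$ of order $1/n$.
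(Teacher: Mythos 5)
Your algebraic decomposition, the conditional Gaussianity and covariance of $Z$, and your treatment of the first bullet are all sound; for the operator-norm bound the paper does not actually spell out an argument, and your route (polarized restricted isometry applied to the pair $(\mathbf{u}\mathbf{v}^T, W)$ with $\mathrm{rank}(W)\le 2k$, combined with the Frobenius bound of Theorem~\ref{th:mainthm2}) is exactly the natural one and gives the stated rate.

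The gap is in the second bullet. Your leave-one-entry-out decoupling hinges on the claim that zeroing $X^i_{j,j'}$ for every $i$ perturbs the IHT output by $o(1/\sqrt{n})$ in Frobenius norm, justified by ``the one-step map is contractive with rate $\rho$, so the perturbation is damped by $\rho^r$.'' This does not work: the modified design enters $\hat\Psi^s$ at \emph{every} iteration $s$, not only at initialization, so the perturbation is injected persistently and accumulates to roughly $(1-\rho)^{-1}$ times the per-step perturbation rather than being damped geometrically. Moreover the per-step perturbation of $\hat\Psi^s$ at the affected entry is itself of order $n^{-1}\sum_i \xi_i Y_i = O(1/\sqrt{n})$, not $o(1/\sqrt{n})$, so even the target of the induction is in doubt. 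The paper avoids all of this: for i.i.d.\ $\mathcal N(0,1)$ design, $\mathcal X A\sim\mathcal N(0,\|A\|_2^2 I_n)$ for \emph{any} fixed matrix $A$ with no rank restriction, so chi-square (Bernstein) concentration gives $\bigl|\tfrac1n\|\mathcal X A\|_2^2-\|A\|_2^2\bigr|\le C\|A\|_2^2\bigl(\sqrt{\log(1/\delta)/n}+\log(1/\delta)/n\bigr)$ (Lemma~\ref{lem:concer}); polarization as in Lemma~\ref{lem:tra} then bounds $\bigl|\tfrac1n\langle\mathcal X\mathcal U^{m,m'},\mathcal X\psi^{r+1}\rangle-\langle\mathcal U^{m,m'},\psi^{r+1}\rangle\bigr|$ by $4\tilde v_n(\delta/(3d^2))\|\psi^{r+1}\|_2$ after a union bound over the $d^2$ entries, and the Frobenius bound $\|\psi^{r+1}\|_2=O(\sqrt{kd\log(1/\delta)/n})$ finishes the proof under $kd\log d=o(n)$. (Your instinct that the dependence of $W$ on the design needs care is legitimate---the paper applies its fixed-matrix concentration to the random $\psi^{r+1}$ without comment---but the decoupling device you propose is not a viable repair as sketched.)
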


This theorem, which is in the spirit of the works in the context of sparse linear regression of~\cite{zhang,javanmard, vandegeer2014}, implies that there exists an estimator of $\Theta$ that has a Gaussian limiting distribution, and whose rescaled bias $\Delta$ with respect to $\Theta$ can be bounded $i)$ in operator norm under our Assumption~\ref{ass:designbis} and $ii)$ in $L_{\infty}$ norm as well in the specific case where the design is Gaussian.


\begin{remark}\label{rem:noise}
Theorems~\ref{th:mainthm2} and~\ref{thm:asymnorm2} are proved for a Gaussian noise $\epsilon$, but these results are easily generalisable to any independent, sub-Gaussian noise, with a similar but more technical proof (based on Talagrand's inequality). The results of Theorem~\ref{thm:asymnorm2} would however be modified in that the random variable $Z$, conditioned on the design $\mathbb X$, would then not be exactly Gaussian, but have a limiting Gaussian distribution using the central limit theorem.
\end{remark}


\paragraph{Stopping rule $r$} Theorem~\ref{th:mainthm2} is satisfied after $r= O(\log(n))$ iterations of our thresholding strategy, and so we know what is a theoretical value for $r$ so that our strategy works. However, it is possible to propose a data driven stopping rule that will perform well. For a desired precision $e>0$, we propose to stop the algorithm as soon (after having thresholded a last time) as
\begin{equation}\label{eq:sr}
T_r \leq (1+e) \frac{1}{1-\rho} v_n.
\end{equation}
Let us write $\hat r$ for the time where the stopping rule stops. The following result holds for the estimator stopped at this stopping rule.

\begin{thm}\label{prop:srule}
Assume that Assumption~\ref{ass:designbis} is satisfied and that $\tilde c_n(2K) \sqrt{K} < 1/8$, i.e.~$\rho \leq 1/2$. Let $e \leq 0.1$ in (\ref{eq:sr}. The estimator $\hat \Theta^{\hat r}$ satisfies with probability larger than $1-\delta$ and for any $k \leq K/2$
\begin{equation*}
\sup_{\Theta \in \MM(k), \|\Theta\|_{2} \leq B} \| \Theta  - \hat \Theta^{\hat r}\|_{S} \leq  \frac{1.1}{1-\rho} v_n = 2.2 C \sqrt{\frac{d\log(1/\delta) }{n}},
\end{equation*}
and also that
$$\sup_{\Theta \in \MM(k), \|\Theta\|_{2} \leq B} \mathrm{rank}(\hat \Theta^{\hat r}) \leq k,$$
which implies for any $p>0$
$$\sup_{\Theta \in \MM(k), \|\Theta\|_{2} \leq B} \| \Theta - \hat \Theta^{\hat r}\|_{S_p} \leq  2.2C (2k)^{1/p} \sqrt{\frac{d\log(1/\delta) }{n}}.$$
Moreover $\hat r$ is such that
$$\hat r \leq 1 + \frac{\log\Big(10(1-\rho)T_0/(v_n)\Big)}{\log(1/\rho)} \leq O(\log(n)).$$
\end{thm}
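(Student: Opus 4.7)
The plan is to reduce everything to two ingredients: the explicit dynamics of the threshold sequence $(T_r)$, and a per-iteration invariant $\|\Theta - \hat\Theta^r\|_S \leq T_r$ (together with $\mathrm{rank}(\hat\Theta^r)\leq k$) which is precisely the induction that drives the proof of Theorem~\ref{th:mainthm2}. Crucially, since $T_r = \rho T_{r-1} + \upsilon_n$ is a deterministic recursion involving only the algorithmic parameters $\rho$, $\upsilon_n$ and $T_0$, the stopping time $\hat r$ is itself deterministic, so stopping introduces no extra stochastic difficulty on top of what is already handled by Theorem~\ref{th:mainthm2}.

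I would first solve the recursion for $(T_r)$. Subtracting the fixed point $T_\infty := \upsilon_n/(1-\rho)$ gives $T_r - T_\infty = \rho^r(T_0 - T_\infty)$, hence $T_r \leq T_\infty + \rho^r T_0$. The stopping condition $T_r \leq (1+e) T_\infty$ in \eqref{eq:sr} is therefore implied by $\rho^r T_0 \leq e\, T_\infty$, i.e.\ $r \geq \log\bigl((1-\rho)T_0/(e\upsilon_n)\bigr)/\log(1/\rho)$. Plugging $e = 0.1$ and taking the ceiling yields the bound $\hat r \leq 1 + \log\bigl(10(1-\rho)T_0/\upsilon_n\bigr)/\log(1/\rho)$; since $T_0 = B$ is a constant and $\upsilon_n \asymp \sqrt{d\log(1/\delta)/n}$ while $\rho \leq 1/2$, this is of order $\log n$.

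For the error bound I would invoke, on the high-probability event used in Theorem~\ref{th:mainthm2}, the simultaneous invariants
\begin{equation*}
\|\Theta - \hat\Theta^r\|_S \leq T_r, \qquad \mathrm{rank}(\hat\Theta^r) \leq k \qquad \text{for every } r \geq 0.
\end{equation*}
Applied at the (deterministic) time $r = \hat r$, the stopping condition gives $T_{\hat r} \leq (1+e)T_\infty = 1.1\,\upsilon_n/(1-\rho)$, so $\|\Theta - \hat\Theta^{\hat r}\|_S \leq 1.1\,\upsilon_n/(1-\rho) = 2.2\, C\sqrt{d\log(1/\delta)/n}$ (using $\rho \leq 1/2$). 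The rank claim is exactly the second invariant, and the Schatten-$p$ bound is then immediate: $\Theta - \hat\Theta^{\hat r}$ has rank at most $2k$, so $\|\Theta - \hat\Theta^{\hat r}\|_{S_p} \leq (2k)^{1/p}\|\Theta - \hat\Theta^{\hat r}\|_S$, which yields the stated $2.2\,C(2k)^{1/p}\sqrt{d\log(1/\delta)/n}$.

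The main (and only nontrivial) obstacle is importing the per-iteration invariant $\|\Theta - \hat\Theta^r\|_S \leq T_r$ from the proof of Theorem~\ref{th:mainthm2}. That invariant rests on two things: Assumption~\ref{ass:designbis} applied to the rank-$2K$ matrix $\hat\Theta^{r-1} - \Theta$ (which contributes the factor $4\tilde c_n(2K)\sqrt{K} = \rho$ multiplying the previous error through $\hat\Psi^r$), and a concentration bound on the stochastic term $\tfrac{1}{n}\sum_i (X^i)^T \epsilon_i$ (which contributes the additive $\upsilon_n$ via Lemma~\ref{helo}); both are already established once and for all for Theorem~\ref{th:mainthm2} and hold on a single event of probability $\geq 1-\delta$ uniformly in $r$. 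Thus once the sequence $(T_r)$ is controlled and $\hat r$ is shown to be $O(\log n)$, the rest of Theorem~\ref{prop:srule} follows without further stochastic analysis.
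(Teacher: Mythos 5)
Your proof is correct and follows essentially the same route as the paper's: both apply the per-iteration invariant from the proof of Theorem~\ref{th:mainthm2} (on the same probability-$1-\delta$ event) at the stopping time, use the stopping condition to bound $T_{\hat r}$ by $(1+e)\upsilon_n/(1-\rho)$, and bound $\hat r$ via the arithmetico-geometric recursion for $T_r$. Your write-up is if anything slightly more explicit (solving the recursion, noting that $\hat r$ is deterministic, and spelling out the rank-$2k$ step for the Schatten-$p$ bound); the only caveat, shared with the paper's own proof, is that the induction in Theorem~\ref{th:mainthm2} actually yields $\|\Theta-\hat\Theta^{r}\|_S\le 2T_{r}$ rather than $T_{r}$, a factor of two that affects only the explicit constant.
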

This empirical stopping rule, that does not require the tuning of any additional parameters\footnote{In Subsection \ref{s:sim} we introduced the desired precision $e$. The precision $e$ is a very natural quantity to choose for the experimenter---one can set e.g.~$e=0.1$ for an $1.1$ optimal solution with respect to the solution outputted by an algorithm that runs for an infinitely long time}, is guaranteeing minimax optimal results in less than $\log(n)$ iterations. Note that Theorem~\ref{thm:asymnorm2} would also hold using this stopping rule - this can be proved in the same way as Theorem~\ref{prop:srule} is proved. 

\subsection{Discussion}\label{ss:disc}

\paragraph{Comparison of our results with the literature} Our Theorem~\ref{th:mainthm2} gives bounds for our estimators in all Schatten $p>0$ norms (including the operator norm, and therefore uniform entry wise bounds in all rotation basis). A first point is that our results are minimax optimal in both Frobenius and operator norm. The corresponding lower bound in Frobenius norm can be found in e.g.~ Theorem 5 of \cite{koltchinskii2011} (under an assumption related to our Assumption~\ref{ass:designbis}) or~\cite{candes2011} under an assumption that is the same as ours. The corresponding lower bound in operator norm can be found in e.g.~\cite{carpentier2015uncertainty}. In addition to this, the paper~\cite{koltchinskii2015optimal} even contains further lower bounds results proving that the operator norm rate $\sqrt{d/n}$ (and associated Schatten $q$ norm $k^{1/q}\sqrt{d/n}$) is optimal also in the case of quantum tomography that we use in our experiments later on, i.e.~under the additional assumptions that the parameter is a density matrix and that the design is random Pauli. To the best of our knowledge, our method is the first iterative method that has such an optimality property in operator norm - for instance, the paper~\cite{koltchinskii2015optimal} provides results for Schatten norms with $q \in [1,2]$, but not for other Schatten norms. Besides, we proved in Theorem~\ref{thm:asymnorm2} a slight modification of our estimator has an explicit Gaussian limiting distribution, and this is, again to the best of our knowledge, the first iterative method for low rank matrix recovery that has such a property. On top of that, the computational complexity of our algorithm is low as for any procedure based on iterative hard thresholding : see the papers~\citep{goldfarb2011, tanner2012}. Our assumption~\ref{ass:designbis} is a strong RIP condition. But it is in particular satisfied in the interesting application of multiple ion tomography for the natural Pauli design as soon as the number of settings is large enough, see Subsection~\ref{ss:q}.

Operator norm bounds are particularly interesting since they provide an entrywise bound up to any change of orthonormal basis. In particular, they provide a bound on the eigen values - and since these bounds do not depend on the true rank $k$, they can be used to implement conservative confidence sets. Moreover as highlighted in the papers~\cite{zhang,javanmard, vandegeer2014}, having a bound on the entrywise risk, and then an estimator with explicit limiting distribution, is interesting in that it can be used to construct tests and confidence intervals for subsets of coordinates of the parameter $\Theta$. We illustrate this point in the Simulation section (see Section~\ref{s:sim}), where a confidence set is constructed using the limiting distribution. Note however that the bound on the bias term $\Delta$ in $L_{\infty}$ norm in Theorem~\ref{thm:asymnorm2} requires the fact that the design is Gaussian. On the other hand, the bound on the bias term $\Delta$ in operator norm in Theorem~\ref{thm:asymnorm2} requires only the fact that our assumption~\ref{ass:designbis} is satisfied.

\paragraph{Stopping rule $r$} Our theorems are satisfied after $r= O(\log(n))$ iterations of our thresholding strategy, and so we know what is a theoretical value for $r$ so that our strategy works. We also defined an empirical stopping rule, see~\eqref{eq:sr} and Theorem~\ref{prop:srule}. We use this stopping rule in practice for all our experiments in Section~\eqref{s:ex}.

\paragraph{Calibration of the parameters of the proposed method} Our method is not parameter free - there are three quantities that need to be calibrated. The two first ones, that we write $\rho$ and $\upsilon_n$, enter in the definition of the thresholds sequence $(T_r)_r$. $\rho$ controls the rate at which we make our threshold decay, and $\upsilon_n/(1-\rho)$ is the quantity toward which it converges when $r$ goes to infinity. The last quantity, namely $T_0$ is the initialisation of the threshold sequences. Here are some comments on how to choose these quantities:
\begin{itemize}
\item {\bf Rate of decay $\rho$ :} In theory the parameter $\rho$ can be taken between $1$ and $4\sqrt{K}\tilde c_n(2K)$ where $K$ is an upper bound on the rank of the parameter and $\tilde c_n(2K)$ is the constant associated to the design such that Assumption~\ref{ass:design} is satisfied. It might not be really possible to compute exactly $K$ or $\tilde c_n(2K)$ without more assumptions on the design. But there is at least one design that is interesting in practice, namely the random Pauli design for quantum tomography, that is such that we have an upper bound on $\tilde c_n(2K)$ for all $K$ that is of order $\sqrt{Kd\log(d)/n}$ with high probability, i.e.~it decays with $n$ (the same holds in Gaussian design up to the $\log$ term). In this design if $n$ is large enough, we know that taking $\rho = 1/2$ will work - we do not want to take $\rho$ too close to $1$ since the bound on the performance of the estimator scales with $1/(1-\rho)$.
\item {\bf Smallest threshold calibration $\upsilon_n$} The interpretation and theoretical value of $\upsilon_n$ is clear: it should be taken to be larger than the $\delta$ quantile of the LHS quantity defined in Equation~\eqref{eq:1} divided by $\|A\|_2$. Now since we do not have access to this quantile, we calibrate it in the experimental section of this paper as an empirical estimator of the asymptotic quantile described above (using Theorem~\ref{thm:asymnorm2}).
\item {\bf Initialisation threshold $T_0$ :} The constant $T_0$ needs to be taken as an upper bound on the Frobenius norm of $\Theta$. Note first that estimating from the data an upper bound on $\|\Theta\|_2^2$ is easy under Assumption~\ref{ass:design}, i.e.~the quantity for $\kappa>0$
$$\frac{1}{n}\|Y\|_2^2(1+\kappa),$$
overestimates $\|\Theta\|_2^2$ . 
In our simulations, we propose a slightly more refined heuristic upper bound and use the same $T_0$ and $T_r$ in Subsection~\ref{s:sim} and \ref{ss:q}.
\end{itemize}
To conclude on the practical tuning of the constants, we would like to emphasize that at least in practical situation, namely quantum tomography, we have enough information about both the design and the noise level to know that the above calibration will be working, provided that the target matrix is indeed low rank. So although the tuning of parameters is always a tricky issue for any algorithm, in at least this specific application, our algorithm can be used as it is. 

\section{Experiments}\label{s:ex}

In this section we present some experiments, first some simulation for the construction of confidence intervals, and then a formal comparison of our thresholded estimator with other methods on specific quantum tomography problem, namely multiple ion tomography.

\subsection{Simulation results for the construction of entry-wise confidence intervals}\label{s:sim}



We performed experiments for low-rank matrix recovery, with matrix dimension $d$ . We consider a Gaussian design where each $X^i_{j,j'} \sim \mathcal N(0,1)$ and are independent. We also consider a Gaussian uncorrelated noise $\epsilon \sim \mathcal N(0, I_n)$. We consider a parameter $\Theta$ of rank $k$ that is stochastically generated in an isotropic way as
$$\Theta = \sum_{l = 1}^k N_l N_l^T,\quad \mathrm{where,} \quad N_l \sim \mathcal N(0, I_d).$$

We implemented our method choosing a data-driven heuristic for the choice of our parameters. We first set
$$\hat \Theta^0 = 0.$$
We set for any $r\geq 1$
\begin{equation}\label{emprisk} \hat \sigma_r^2 = \|Y - (\mathrm{tr}\big((X^i)^T \hat \Theta^{r-1}\big)_{i\leq n}\|_2^2/n,
\end{equation}
i.e.~the empirical risk, and
\begin{equation}\label{upsilonr}
\upsilon_{n}(r) = \hat \sigma_r \sqrt{\frac{d}{n}} q_{90\%},
\end{equation}
where $q_{90\%}$ is the $90\%$ quantile of a $\mathcal N(0,1)$ random variable. $\upsilon_n(r)$ replaces here $\upsilon_n$, and is a heuristic high probability bound on the error for each coordinate.

 We set
\begin{equation}\label{T1}
T_1 = B = \hat \sigma_1 +  \upsilon_{n}(1),
\end{equation}
which is by construction higher than the Frobenius norm of $\Theta$ with high probability, and
\begin{equation}\label{Tr}
T_r = \rho T_{r-1} + \upsilon_{n}(r),
\end{equation}
where we select $\rho = 1/2$ (we take $1/2$ so that the decay is not too fast, but also so that $1/(1-\rho)$ is not too large).

We also use the heuristic stopping rule described in Equation~\eqref{eq:sr}, i.e.~we iterate until
$$T_r \leq (1 + e) \times \frac{1}{1 - \rho} \upsilon_{n}(r) = 2.2 \upsilon_{n}(r),$$
for $e = 0.1$.

We also construct, using the limiting distribution results provided in~Theorem \ref{thm:asymnorm2}, a confidence set for the all the entries of $\Theta$ that is such that for any entry $(m,m')$, we set the confidence interval
$$C_n^{m,m'} = [\hat \theta_{m,m'} - c_{m,m'}, \hat \theta_{m,m'} + c_{m,m'}],$$
where
$$c_{m,m'} = \hat \sigma_r \hat \Sigma_{m,m'} \frac{q_{95\%}}{\sqrt{n}},$$
where $\hat \Sigma_{m,m'}^2 = 1/n \times \sum_{i \leq n} (X_{m,m'}^i)^2$.

We provide several results, depending on the values of $(n,p,k)$, averaged over $100$ iterations of simulations. For these simulations, we present three kinds of results:
\begin{itemize}
\item A first set of graphs (Figure~\ref{fig:1}) presents, for different values of $p,k$, and in function of the sample size $n$, the logarithm of the rescaled Frobenius risk of the estimate $\hat \Theta$, i.e.
$$\log\Big(\frac{\|\hat \Theta - \Theta\|_2}{\|\Theta\|_2}\Big).$$
\item A second set of graphs (Figure~\ref{fig:2}) presents, for different values of $p,k$, and in function of the sample size $n$, the logarithm of the averaged diameter of the confidence intervals $C_n^{m,m'}$, i.e.
$$\log\Big(\frac{1}{d^2} \sum_{m,m'} c_{m,m'}\Big).$$
\item A last set of graphs (Figure~\ref{fig:3}) presents, for different values of $p,k$, and in function of the sample size $n$, the averaged coverage probability of the confidence intervals $C_n^{m,m'}$, i.e.
$$\frac{1}{d^2} \sum_{m,m'} \mathbf 1 \{ \theta_{m,m'} \in C_n^{m,m'}\}.$$
\end{itemize}
All these graphs also exhibit $95\%$ confidence intervals (upper and lower $2.5\%$ quantile values from 100 iterations) around their means (dotted lines in the graphs, the solid line being the mean).


\begin{center}
\begin{figure}
\begin{minipage}{0.45\textwidth}
 \includegraphics[width=1\textwidth]{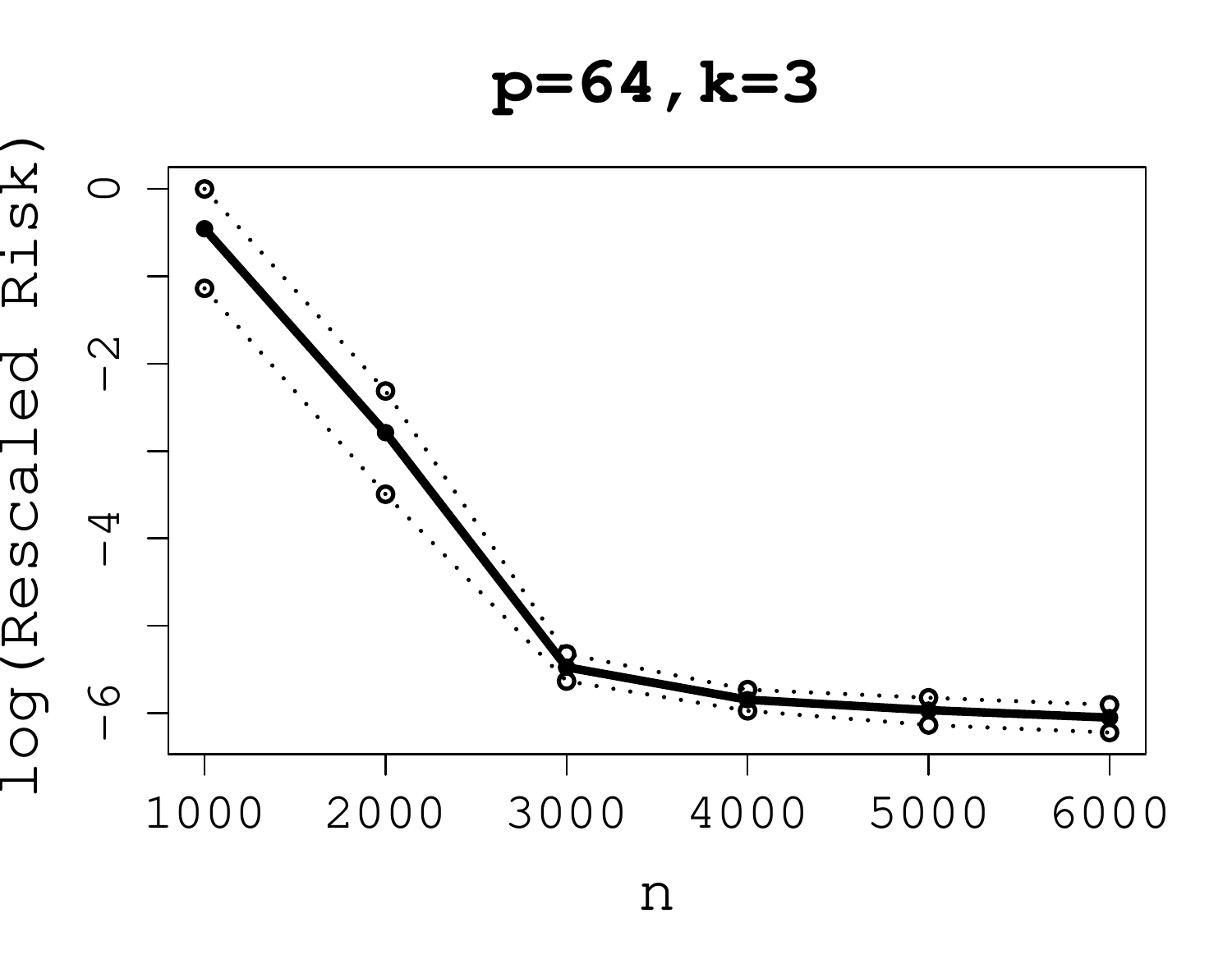}
\end{minipage}
\begin{minipage}{0.45\textwidth}
 \includegraphics[width=1\textwidth]{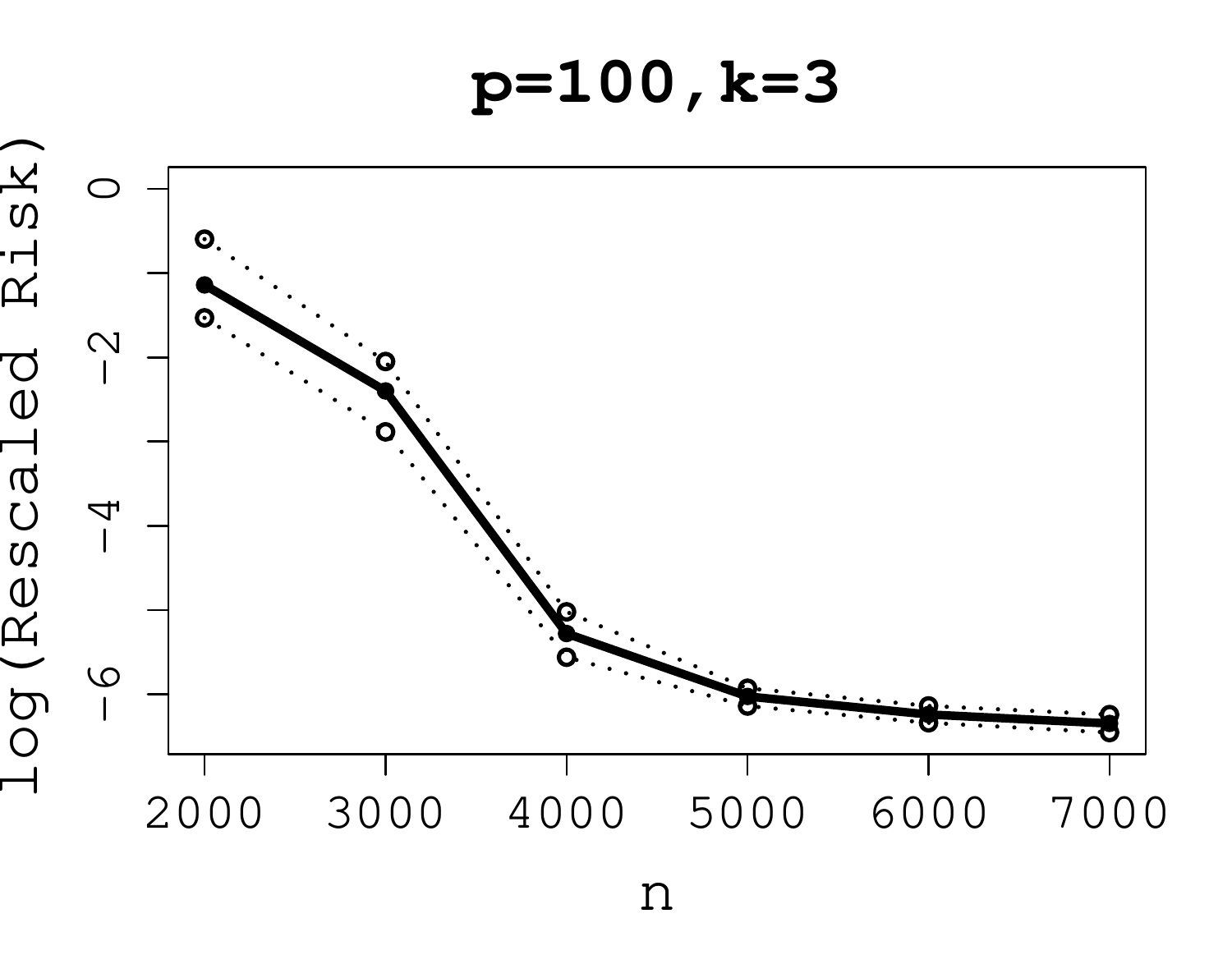}
\end{minipage}

\vspace{-0.3cm}

\begin{minipage}{0.45\textwidth}
 \includegraphics[width=1\textwidth]{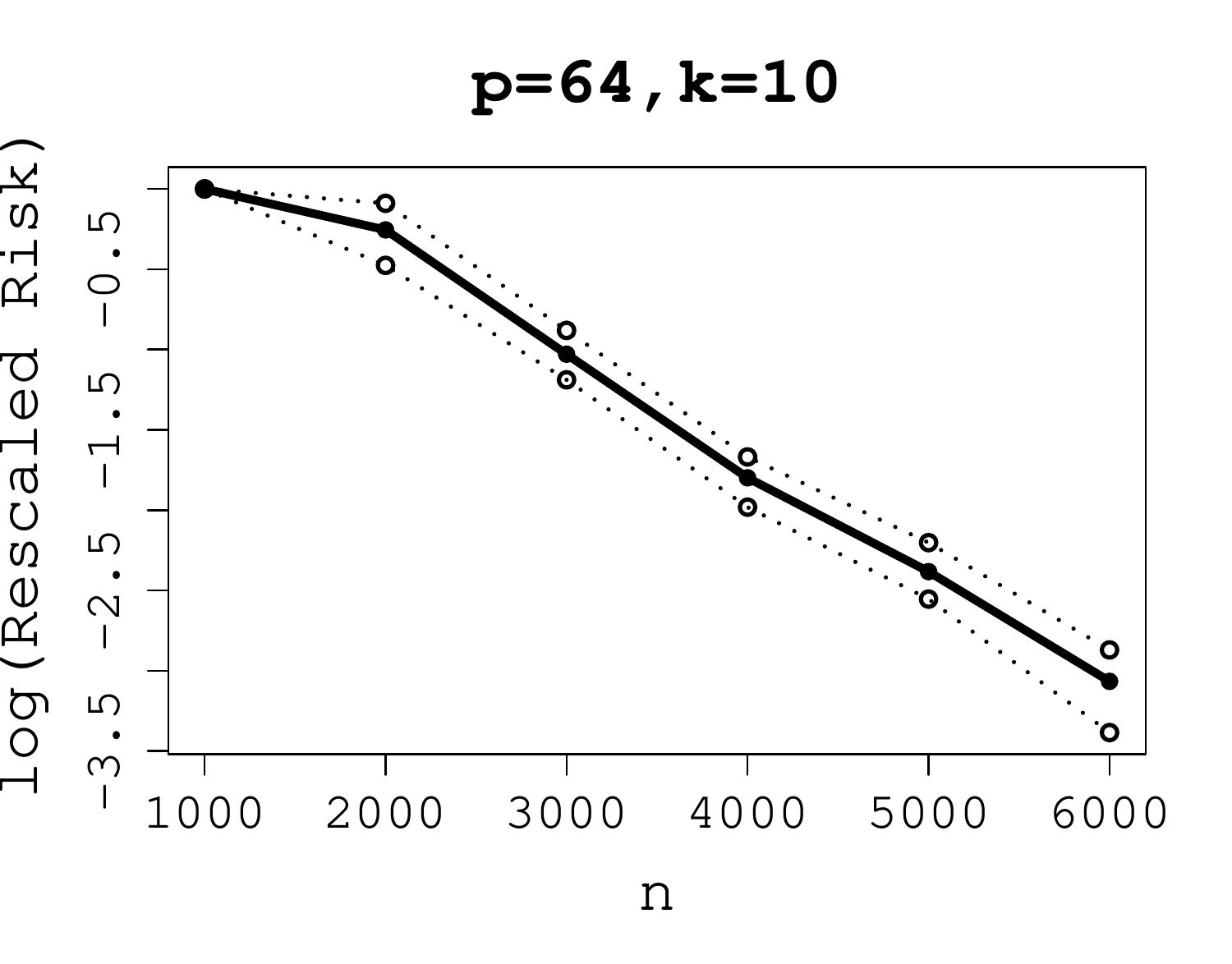}
\end{minipage}
\begin{minipage}{0.45\textwidth}
 \includegraphics[width=1\textwidth]{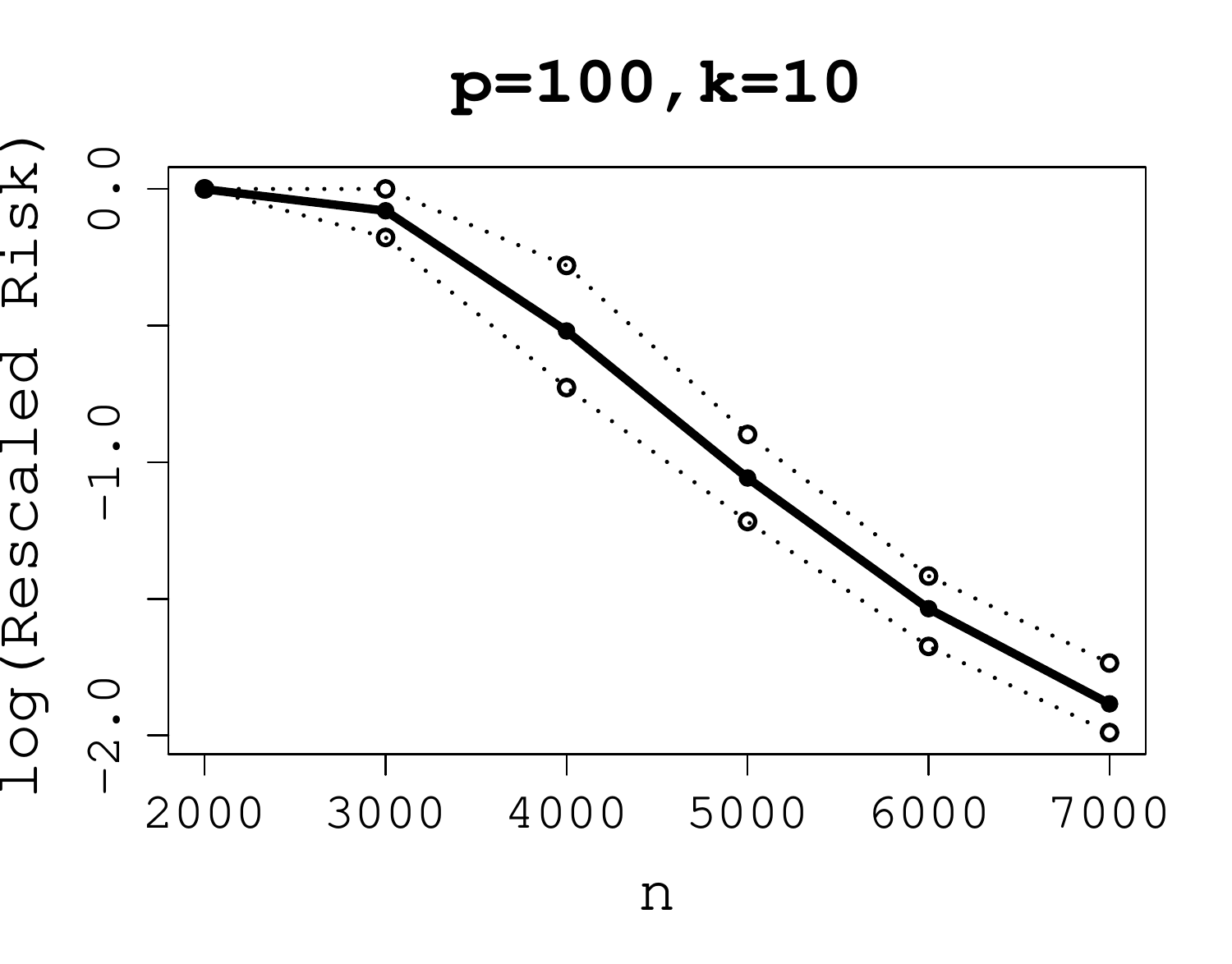}
\end{minipage}
\vspace{-0.2cm}
\caption{Logarithm of the rescaled Frobenius risk of the estimate in function of $n$, for different values of $p,k$. The solid line is the average over $100$ iterations, the dotted lines form $95\%$ confidence intervals.} \label{fig:1}
\end{figure}
\end{center}

\begin{center}
\begin{figure}
\begin{minipage}{0.45\textwidth}
 \includegraphics[width=1\textwidth]{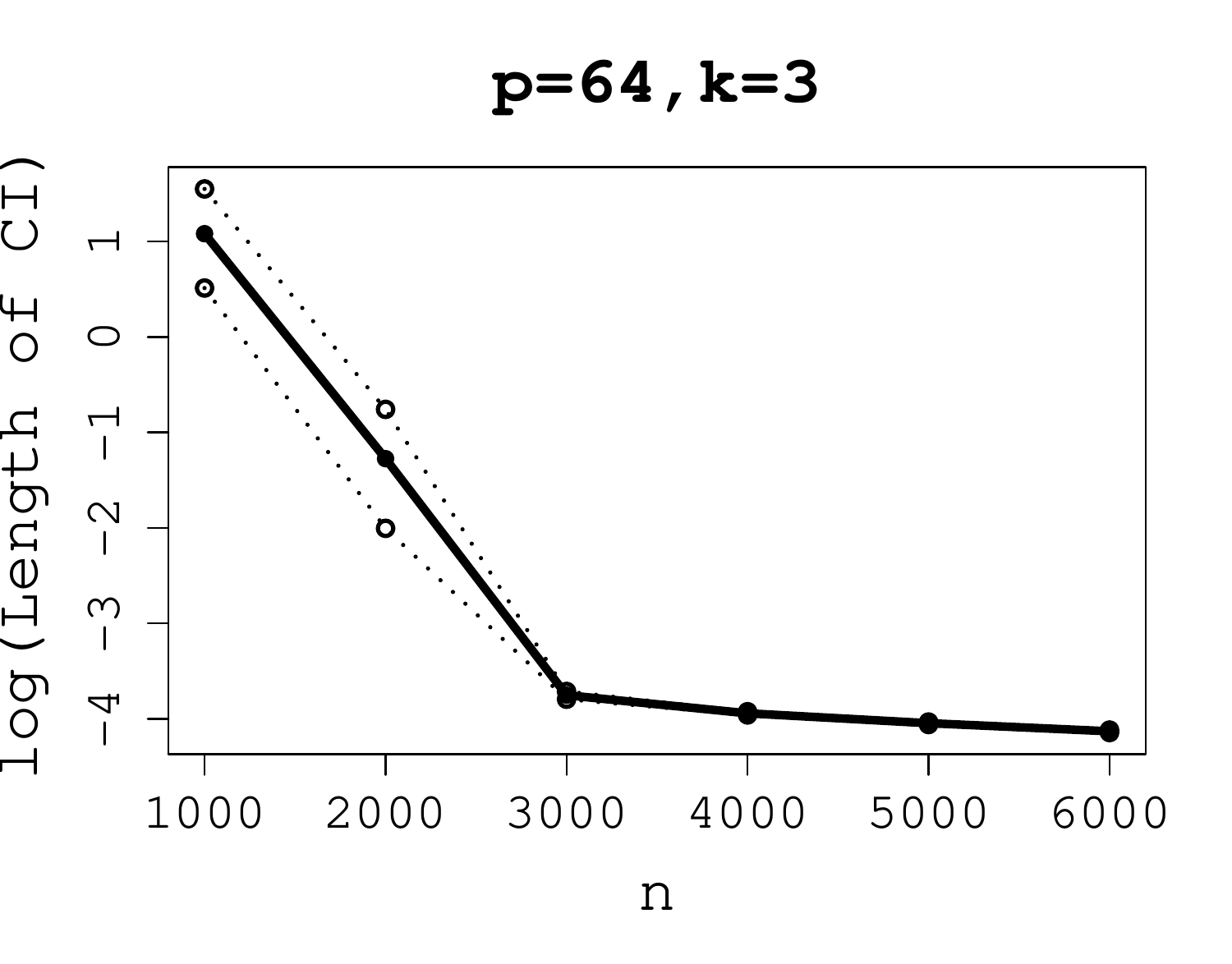}
\end{minipage}
\begin{minipage}{0.45\textwidth}
 \includegraphics[width=1\textwidth]{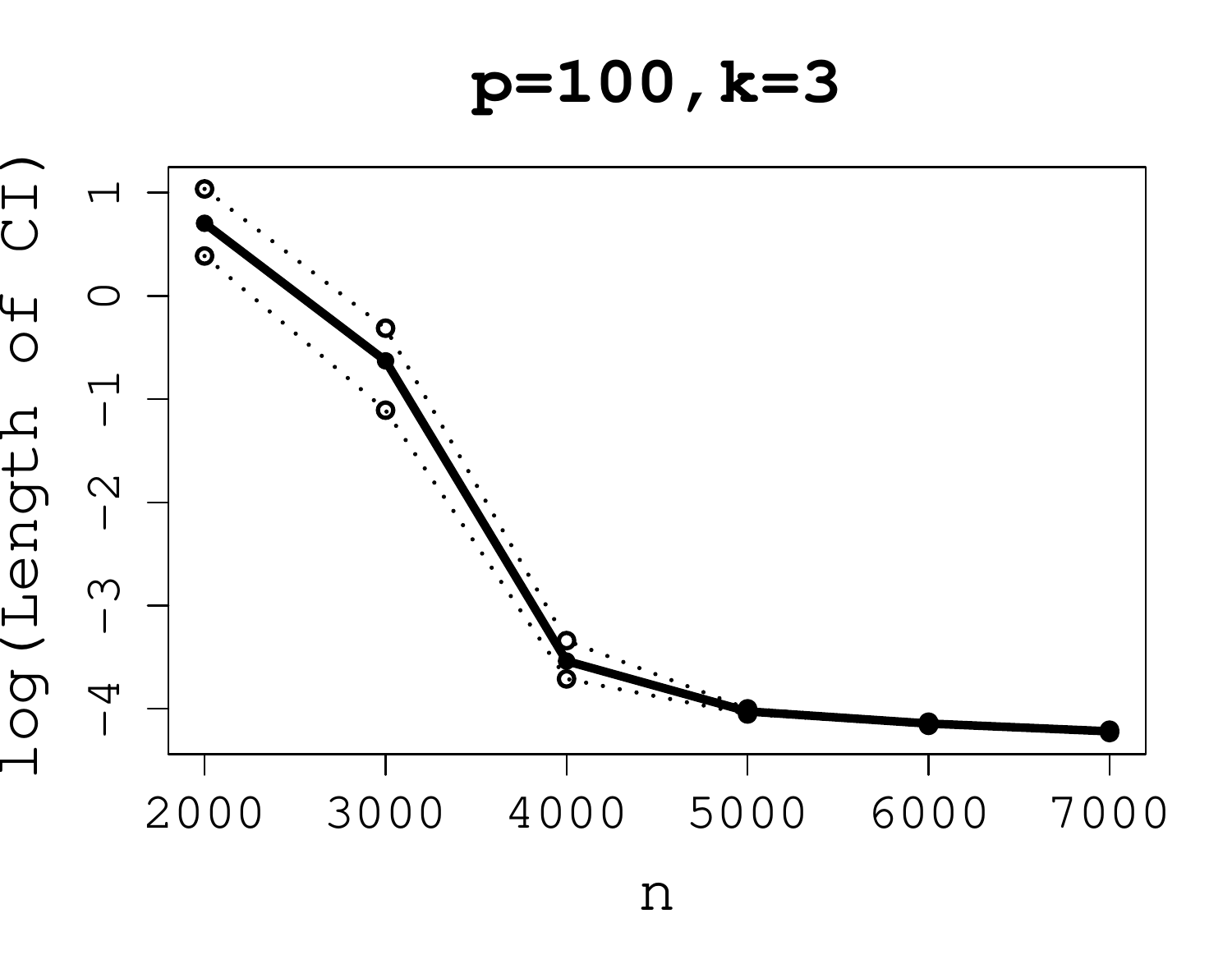}
\end{minipage}

\vspace{-0.3cm}

\begin{minipage}{0.45\textwidth}
 \includegraphics[width=1\textwidth]{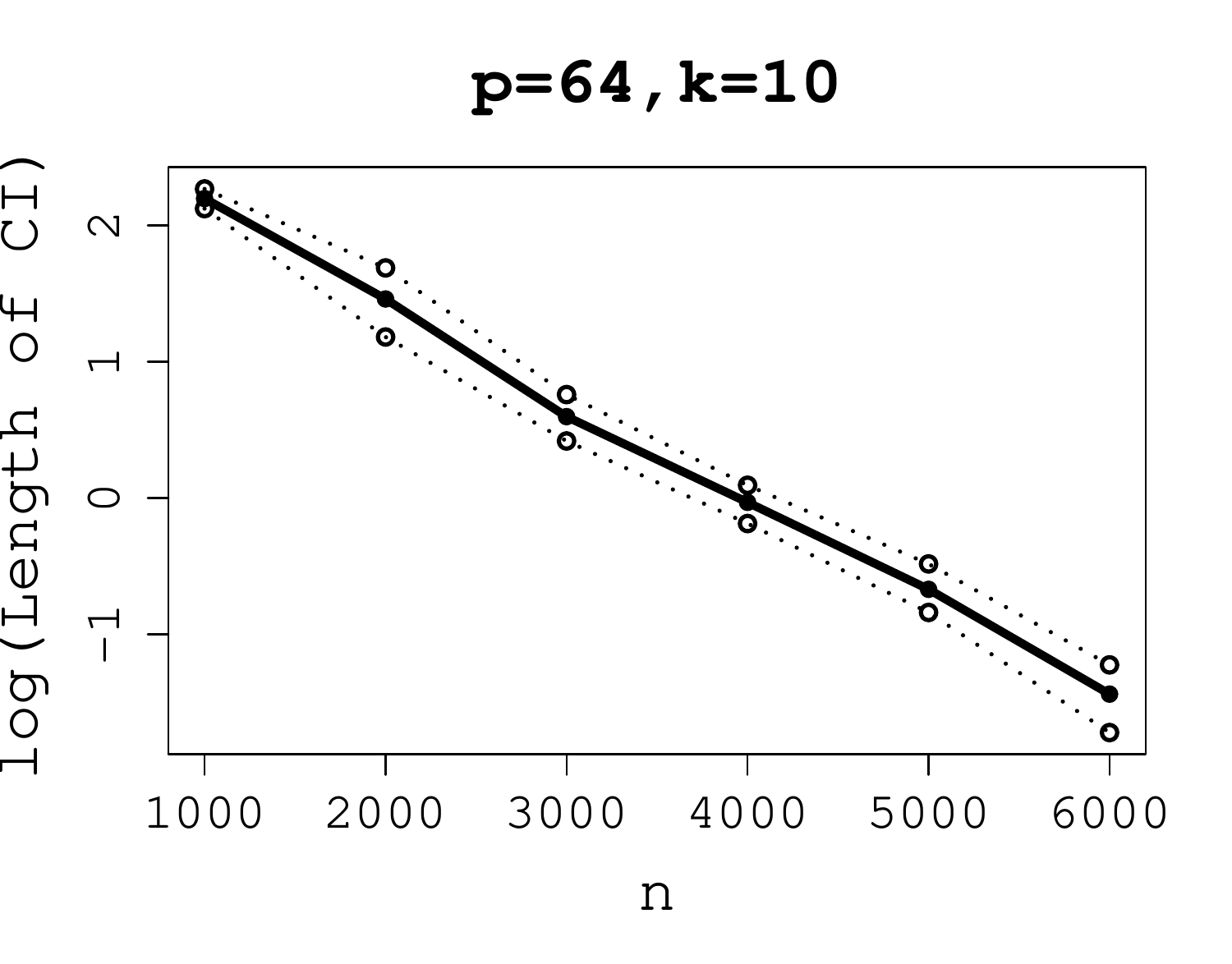}
\end{minipage}
\begin{minipage}{0.45\textwidth}
 \includegraphics[width=1\textwidth]{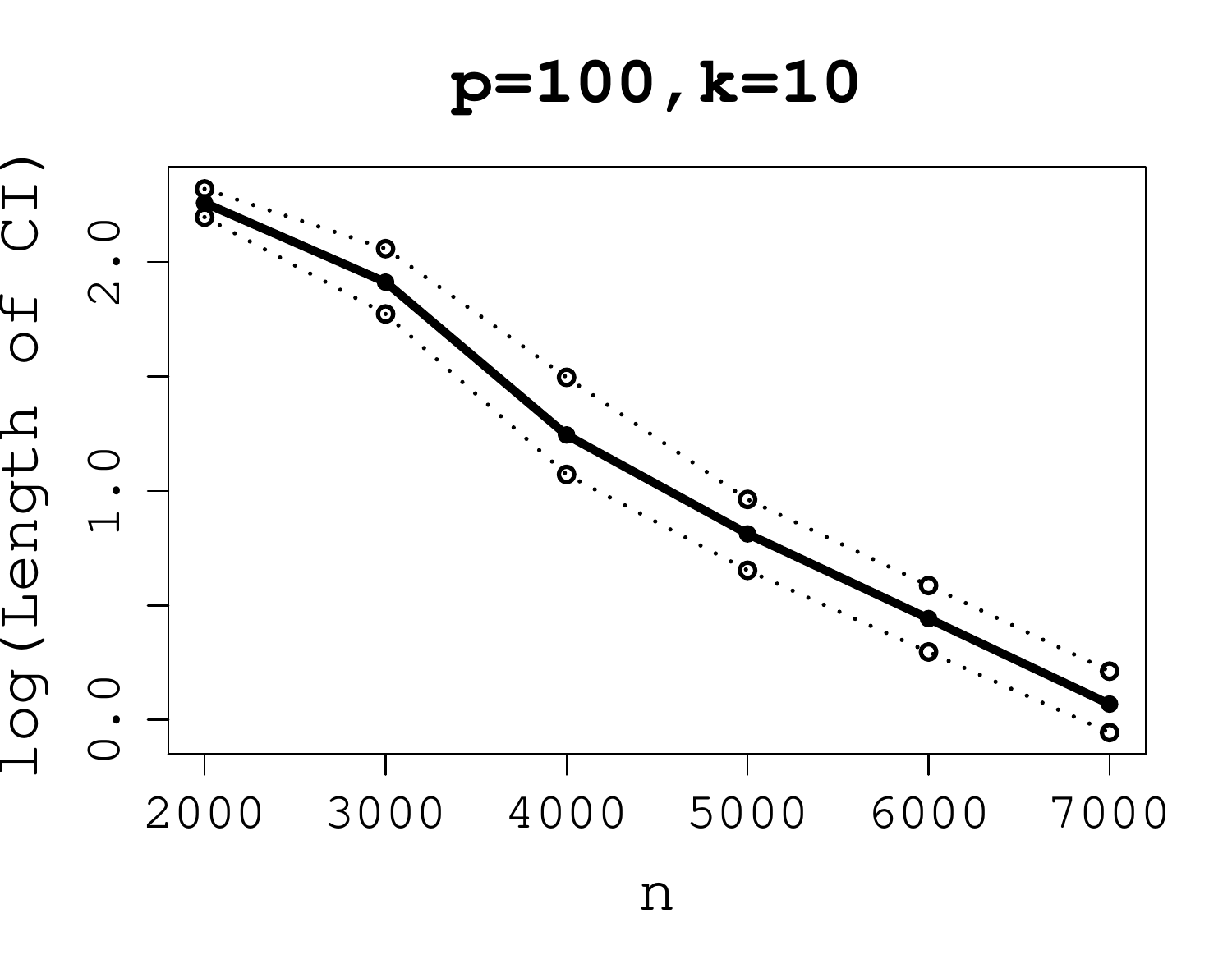}
\end{minipage}
\vspace{-0.2cm}
\caption{Logarithm of the averaged rescaled length of the confidence intervals of the in function of $n$, for different values of $p,k$. The solid line is the average  over $100$ iterations, the dotted lines form $95\%$ confidence intervals.} \label{fig:2}
\end{figure}
\end{center}

\begin{center}
\begin{figure}
\begin{minipage}{0.45\textwidth}
 \includegraphics[width=1\textwidth]{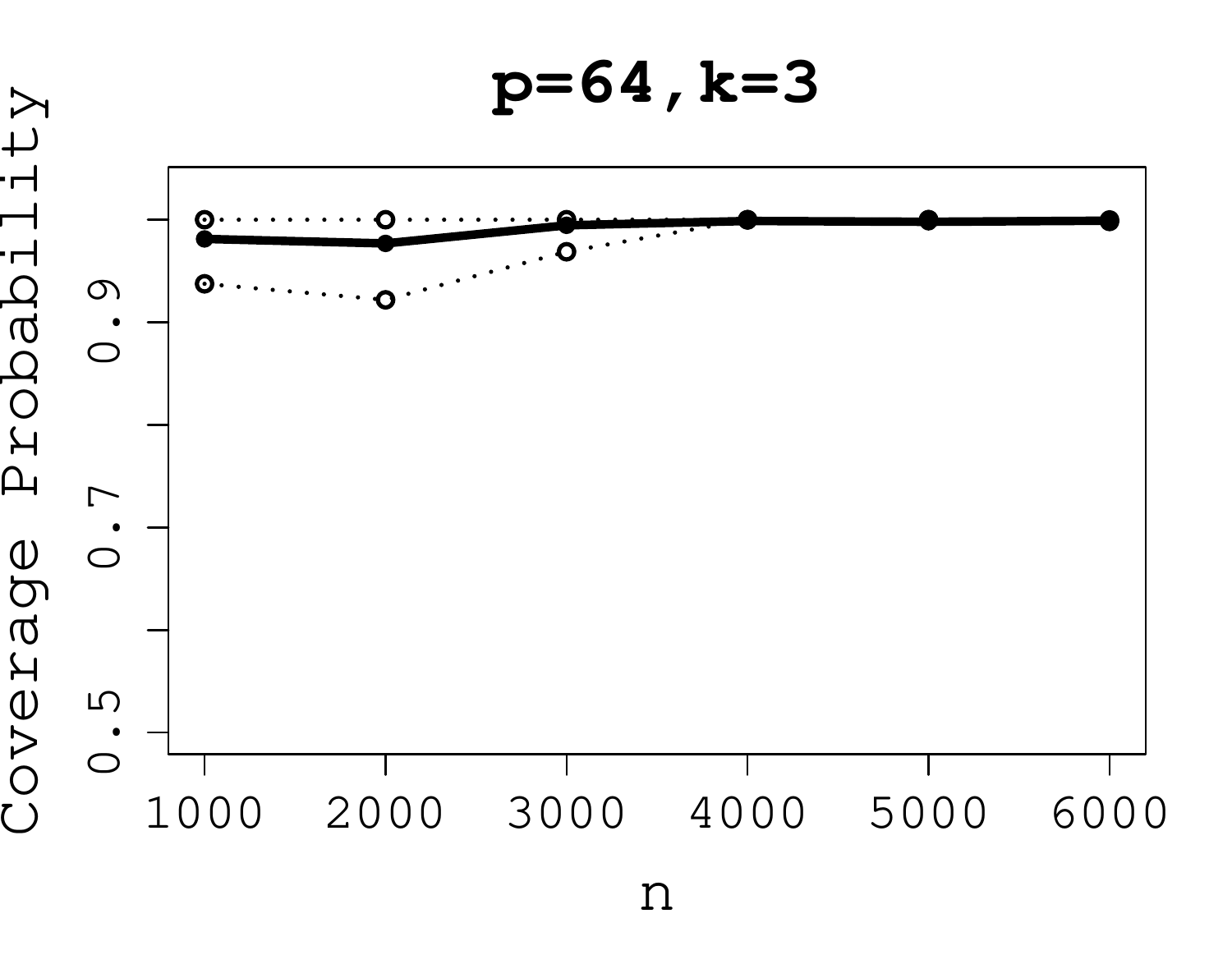}
\end{minipage}
\begin{minipage}{0.45\textwidth}
 \includegraphics[width=1\textwidth]{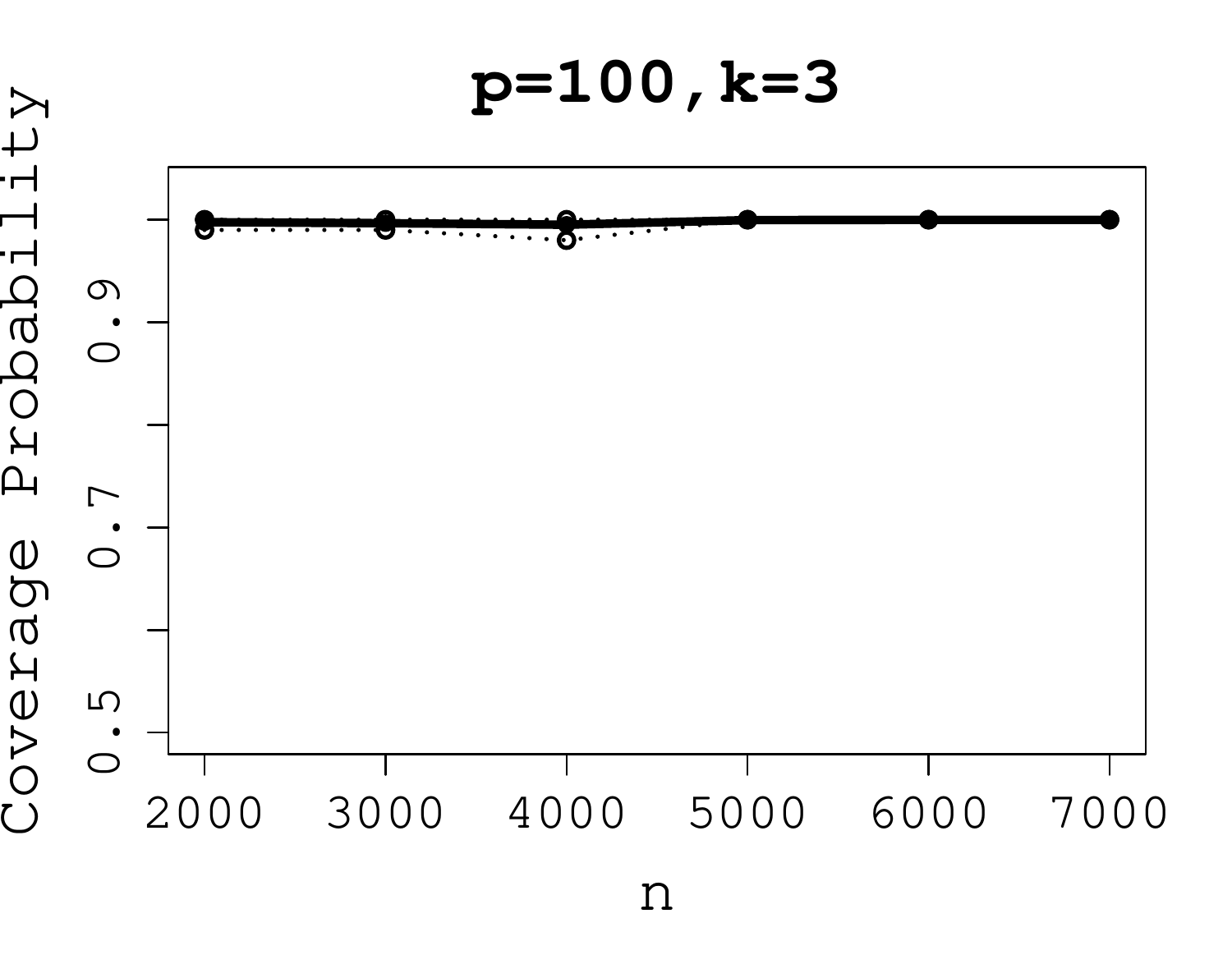}
\end{minipage}

\vspace{-0.3cm}

\begin{minipage}{0.45\textwidth}
 \includegraphics[width=1\textwidth]{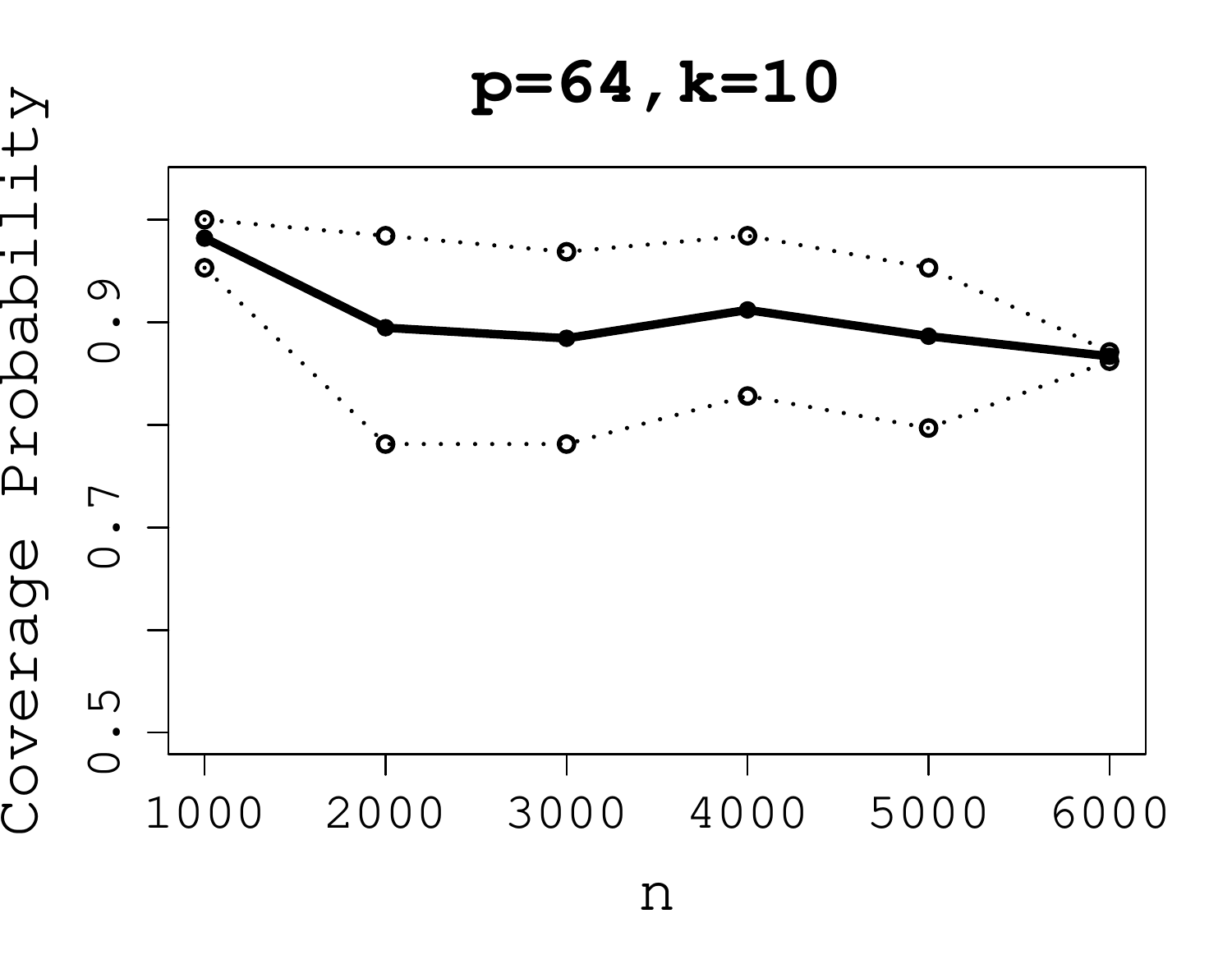}
\end{minipage}
\begin{minipage}{0.45\textwidth}
 \includegraphics[width=1\textwidth]{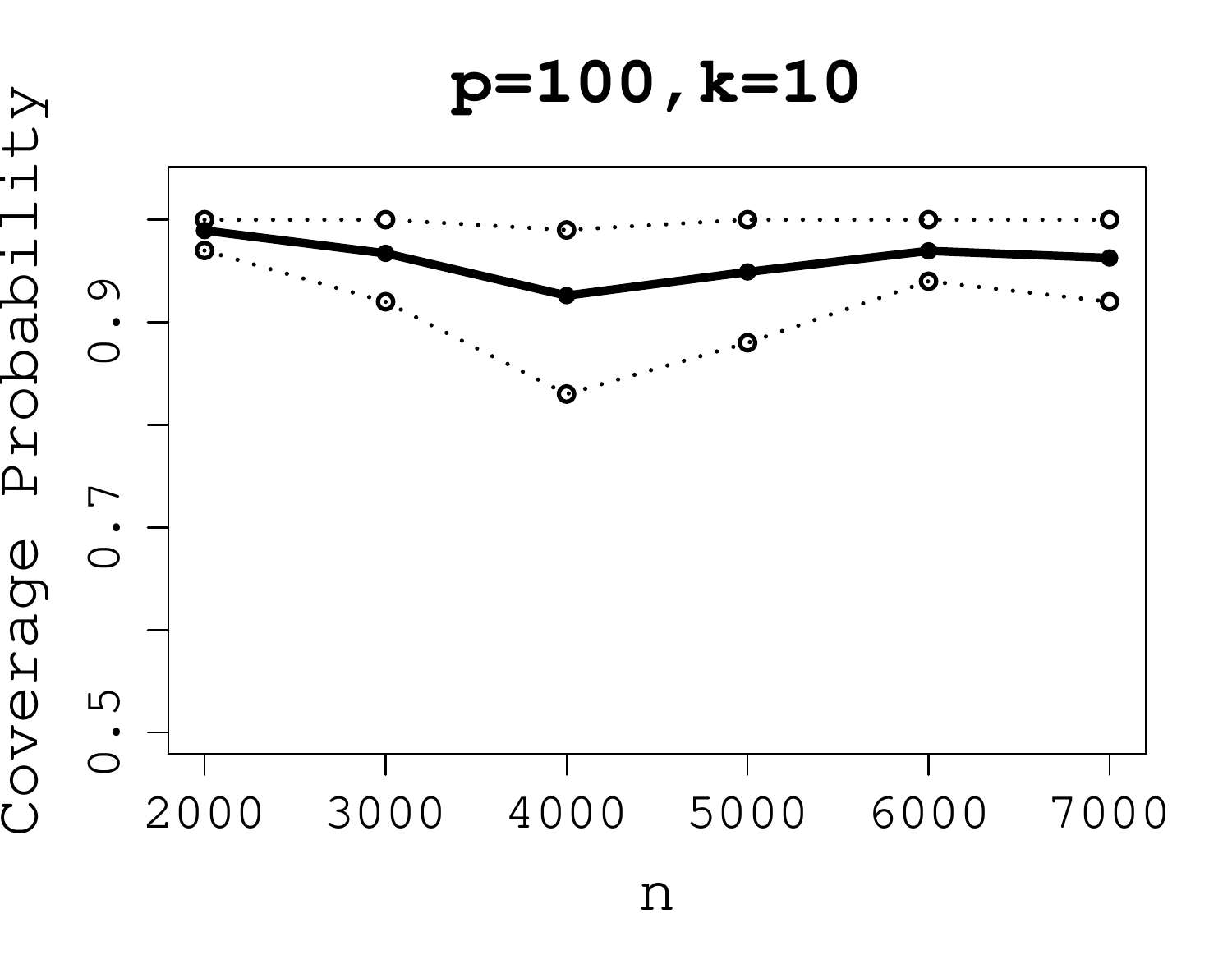}
\end{minipage}
\vspace{-0.2cm}
\caption{Averaged coverage of the confidence intervals of the in function of $n$, for different values of $p,k$. The solid line is the average over $100$ iterations the dotted lines form $95\%$ confidence intervals.} \label{fig:3}
\end{figure}
\end{center}

These figures exhibit different behaviours depending on the difficulty of the problems (increasing with $p$ and more importantly with $k$). The graphs in Figure~\ref{fig:1} for $k=3$ (and $p \in \{64, 100\}$) exhibit first a very fast decay of the risk, until some critical threshold $n = ckd$ where $c$ seems to be between $10$ and $20$. At this point, one can actually observe that the method recovers in most case the true rank $k$ of the matrix, whereas it before recovered only a smaller rank approximate of $\Theta$---with a too small $n$, it could not distinguish all the signal from the noise, and the fact that it gradually does for larger $n$ explains the fast decay of the logarithm of the rescaled risk. After that, the curve has a kink and the decay becomes slower (the theory predicts that the logarithm of the rescaled risk should decrease with $n$ as $-\log(n)$). After this kink, all the $k$ ``rank directions" have been identified, and the logarithm of the rescaled risk starts decreasing slower, according to the theoretical rate of $-\log(n)$. The graphs in Figure~\ref{fig:1} for $k=10$ (and $p \in \{64, 100\}$)  exhibit mainly the first regime, since $k$ is larger and the second regimes comes for larger values of $n$---empirically, we can observe that the method recovers most of the time all $k$ ``directions" as soon as $n = 4000$ for $p = 64$, as soon as $n = 6000$ for $p = 100$. 

A parallel evolution can be observed in Figure~\ref{fig:2}, for the logarithm of the average length of the confidence intervals. It is not at all surprising since this length is supposed to reflect the risk. The averaged coverage of these intervals in Figure~\ref{fig:3} is in average larger than $87\%$ in all cases, and in more than $95\%$ of the cases, it is higher than $74\%$ in all cases, which makes our method reliable.


\subsection{Quantum tomography experiments}\label{ss:q}

\subsubsection{Description of the ion tomography setting}

An important application that satisfies our assumptions and to which our method can be applied is \textit{quantum tomography}, i.e.~the estimation of quantum states. 




We consider the popular problem of multiple ion tomography, i.e.~the problem of estimating the joint quantum state of $m$ two-dimensional
systems (qubits), as encountered in ion trap quantum tomography, see~\cite{Guta, GLFBE10,butucea2015spectral, Blatt, acharya2015efficient} or~\cite{holevo2001statistical,nielsen} for textbooks on this problem. Such a system's \textit{quantum state} can be represented by a positive semi-definite unit trace complex matrix $\Theta$ (a \emph{density matrix}) of dimension $d := 2^m$.

For each individual qubit, the experimenter can measure one of the three Pauli observables described by the $2$ by $2$ \emph{Pauli matrices} $\sigma_1 , \sigma_2 , \sigma_3$, where
\begin{equation}
	\sigma^1= \left[
	\begin{array}{cc}
	0 & 1\\
	1 & 0
	\end{array}
	\right],\,
	\sigma^2=\left[
	\begin{array}{cc}
	0 & -i\\
	i & 0
	\end{array}
	\right],\,
	\sigma^3 =\left[
	\begin{array}{cc}
	1 &0\\
	0 & -1
	\end{array}
	\right],
\end{equation}	
and each of these measurements may yield one of two outcomes, denoted by
$+1$ and $-1$ respectively.

Therefore, a full experiment (i.e.~an experiment that describes the measurement for each of the $m$ qubits) is then defined by a setting $S = (s_1 , \ldots , s_m )$ where each $s_l \in \{\sigma_1, \sigma_2, \sigma_3\}$ for $l \leq m$, which specifies which of the $3$ Pauli observables is measured for each qubit. For each fixed setting $S$, the measurement produces random outcomes $O\in \{+1, -1\}^m$, with expected probability 
$$p_{O,S} = \mathrm{tr}(P_{O,S}\Theta),$$
where
$$P_{O,S} = \pi_{o_1,s_1} \otimes \ldots, \otimes \pi_{o_m,s_m},$$
where $\pi_{o_l,s_l}$ is the eigen projector of the the $2$ by $2$ Pauli matrix $s_l$ associated to the eigen value $o_l$ (we remind that the $2$ by $2$ Pauli matrices have eigen values of either $+1$ or $-1$).

Now set \begin{equation}
	\sigma_0=\left[
	\begin{array}{cc}
	1 &0\\
	0 & 1
	\end{array}
	\right],
\end{equation}
for the last $2\times 2$ Pauli matrix such that $(\sigma_i)_{i\in \{0,\ldots, 3\}}$ form an orthogonal basis of $\mathbb C^{2\times 2}$. Let $O$ be the outcome of an experiment given a setting $S = (s_1 , \ldots , s_m )$ (where each $s_l \in {\sigma_1, \sigma_2, \sigma_3}$). Write $\tilde S(E)$ for  a setting where a subset $E\subset \{1,\ldots, m\}$ of the $m$ matrices $s_l$ of this setting have been replaced by $\sigma_0$, and $\tilde O(E)$ for the outcome where the same subset $E$ of the $m$ elements $o_l$ have been replaced by $1$. Since the only eigen value of $\sigma_0$ is $1$, the outcome of the measurement of a qubit by $\sigma_0$ is always $1$. This implies in particular that the distribution of $\tilde O(E)$ as described above, is the same as the distribution of the outcome of an experiment when the measurement setting is $\tilde S(E)$. For this reason, measuring according to setting $S$ actually gives information about all settings $\tilde S(E)$ for any subset $E$ of $\{1, \ldots, m\}$. Thus instead of measuring all settings which are tensor products of $2\times2$ Pauli matrices $\sigma_0, \ldots, \sigma_3$, it is enough to measure all settings which are tensor products of $2\times2$ Pauli matrices $\{\sigma_1, \sigma_2, \sigma_3\}$, as they provide information about corresponding settings that involve $\sigma_0$. Therefore if one measures all $3^m$ settings that correspond to the settings $S = (s_1 , \ldots , s_m )$ where each $s_l \in \{\sigma_1, \sigma_2, \sigma_3\}$, we have observations about all measurements direction, and our measurement setting is complete.

Now we are interested in also dealing with situations where one does not want to observe all $3^m$ settings, and where one has only a number of settings $N \leq 3^m$.

We consider a random measurement setting as in~\cite{FGLE12}, i.e.~each measurement setting $S$ is chosen uniformly at random (each $s_l$ is chosen uniformly at random among $\sigma_1, \ldots, \sigma_3$). Let $N$ be the total number of measurement setting chosen in this random way. For each chosen measurement setting $S$, we perform $T$ repetition of the experiment and observe $T$ independent outcomes. So for each chosen measurement setting $S^i$ with $i \leq N$, we observe $T$ independent outcomes $O^{t,S^i}$ which are observations according to setting $S^i$.

\subsubsection{Expression of the outcomes in the trace regression model}

It is often convenient to express the 
information contained by a measurement $(S,O)$ in a way that involves
tensor products of $2\times2$ Pauli matrices, rather than their spectral
projections, see~\cite{FGLE12}. Indeed, the set of matrices that are created by $m$ tensor products of $2\times2$ Pauli matrices $\sigma_0, \ldots, \sigma_3$ is exactly the set of $2^m\times 2^m =d\times d$ Pauli matrices rescaled by $\sqrt{d}$ (introduced briefly in Remark~\ref{rem:gaussian}) i.e.~the $d\times d$ rescaled Pauli basis. Indeed let $f(O) = \prod_l o_l$, then one easily verifies that
\begin{align*} 
&\mathrm{tr} ((s_1 \otimes \dots \otimes s_{m} )\Theta)\\ 
&= \sum_{O\in\{1,-1\}^m} \Big(\prod_l o_l\Big) \mathrm{tr} \Big( (\pi_{s_1, o_1} \otimes \dots \otimes  \pi_{s_m, o_m})\Theta\Big)= \mathbb E_{O|S}\big(f(O)\big),
\end{align*}
where $\mathbb E_{O|S}$ is the expectation according to the outcome $O$ when measurement $S$ is chosen. In this sense, the measurement described by the $d\times d$ rescaled Pauli matrix  $P_S = s_{1}\otimes\dots\otimes s_m$ can be measured by the parity of the spins $f(O)$ that one gets when performing measurement $S$ : in fact $f(O)|S$ is a random variable such that its value is $1$ with probability $\big(\mathrm{tr} (P_S\Theta)+1\big)/2$ and $-1$ with probability $1-\big(\mathrm{tr} (P_S\Theta)+1\big)/2$ - and its expectation is $\mathrm{tr} (P_S\Theta)$ as noted. We write $\mathcal R(\mathrm{tr} (P_S\Theta))$ for this distribution.

\smallskip
Let us go back to our experimental setup. As remarked before, each of the $N$ quantum measurement settings according to setting $S^i$ (with $i \leq N$), where $S^i = (s_1^i , \ldots , s_m^i )$ and where each $s_l^i \in \{\sigma_1, \sigma_2, \sigma_3\}$ for $k \leq m$ provides us with $d$ information in the sense of our trace regression model, i.e.~we observe at each measurement $S^i$, for each replication $t \leq T$ and for all $E\subset \{1,\ldots m\}$
$$y_{S^i,E} =f(\tilde O^{t, S_i}(E)) \sim \mathcal R(\mathrm{tr} (P_{\tilde S^i(E)}\Theta)).$$

In our trace regression model, we can average the observations $f(\tilde O^{t, S_i}(E))$ and we have the the following averaged observations for any $i\leq s$ and for all $E\subset \{1,\ldots m\}$ 
$$\bar Y_{S^i,E} = \frac{1}{T}\sum_{t \leq T} \bar y_{S^i,E}^t =\mathrm{tr} (P_{\tilde S^i(E)}\Theta)) + \bar\epsilon_{S^i,E},$$
where $\bar y_{S^i,E}^t$ is the $t^{\text{th}}$ repetition (among $T$ iterations) of the observation and where $\bar \epsilon_{S^i,E}$ is the averaged noise and is such that $\mathbb E_{(O^{t,S^i})|S^i} \bar \epsilon_{S^i,E}= 0$, and such that $\bar \epsilon_{S^i,E}$ is sub-Gaussian has a sum of bounded random variables and satisfies $\mathbb E_{(O^{t,S^i}))|S^i} \exp(\lambda \bar \epsilon_{S^i,E}) \leq \exp(\lambda^2/T)$ for any $\lambda \geq 0$.

\smallskip
Now in order for our Assumption~\ref{ass:designbis} to be satisfied for rank $k$ matrices for a large enough number of settings $N$, we need to rescale our data. We set
$$Y_{S^i,E} = \sqrt{d} 3^{-|E|/2} \Big(\frac{3}{4}\Big)^{m/2} \bar Y_{S^i,E} = \sqrt{d} 3^{-|E|/2} \Big(\frac{3}{4}\Big)^{m/2}\mathrm{tr} (P_{\tilde S^i(E)}\Theta)) + \epsilon_{S^i,E},$$
where $|E|$ is the cardinality of $E$ and where $\epsilon_{S^i,E}$ is the rescaled noise and is such that $\mathbb E_{(O^{t,S^i})|S^i}  \epsilon_{S^i,E}= 0$, and such that $ \epsilon_{S^i,E}$ is sub-Gaussian and satisfies $\mathbb E_{(O^{t,S^i})|S^i} \exp(\lambda \epsilon_{S^i,E}) \leq \exp(\lambda^2 3^{-|E|} \Big(\frac{3}{2}\Big)^{m}/T)$ for any $\lambda \geq 0$. It is a direct consequence from the results of~\cite{liu} and from our Remark~\ref{rem:gaussian} that if $N \geq O(k^2d \log(d))$, then with high probability on the random draws of our settings we have that Assumption~\ref{ass:designbis} is satisfied for rank $k$ matrices. We can therefore apply our method and other low rank recovery methods such as trace regression methods to our rescaled data 
\begin{equation}\label{rescaleddata}
\Bigg(Y_{S^i,E}, \sqrt{d} 3^{-|E|/2} \Big(\frac{3}{4}\Big)^{m/2} P_{\tilde S^i(E)}\Bigg)_{i \leq N, E \subset \{1,\ldots, m\}}.
\end{equation}

\subsubsection{Experimental results}\label{subsub:experi}
We let $m \in \{4,5,6\}$ so that $d \in \{16, 32, 64\}$, and let $k \in \{1,2\}$ with $\alpha \in \{2,3,4,5\}$ and consider $N = \alpha k d$ measurement settings. These settings with small $k$ and $\alpha$ were chosen since we are more interested in the truncated measurement setting (such that $N \leq 3^m$). For the replication, we consider $T \in \{d, 10d\}$. Using the data (\ref{rescaleddata}), we estimate $\Theta$ by three methods---our proposed method (IHT), the truncated maximum likelihood estimator (MLE) for the high dimensional multiple ion tomography model as described in~\cite{acharya2015efficient}, and nuclear norm penalization (NNP) method computed using a gradient descent method \citep[e.g.][]{agarwal2012}.

We use the same tuning parameters as in (\ref{emprisk}), (\ref{upsilonr}), (\ref{T1}), and (\ref{Tr}) and also we select $\rho=1/2$ and the same stopping rule as we describe in Subsection \ref{s:sim}. For the stepsize of the gradient descent method used to compute the NNP estimator, we follow the recommendation in Subsection 3.1 in \cite{agarwal2012}.

\begin{center}
\begin{figure}
 \includegraphics[width=1\textwidth]{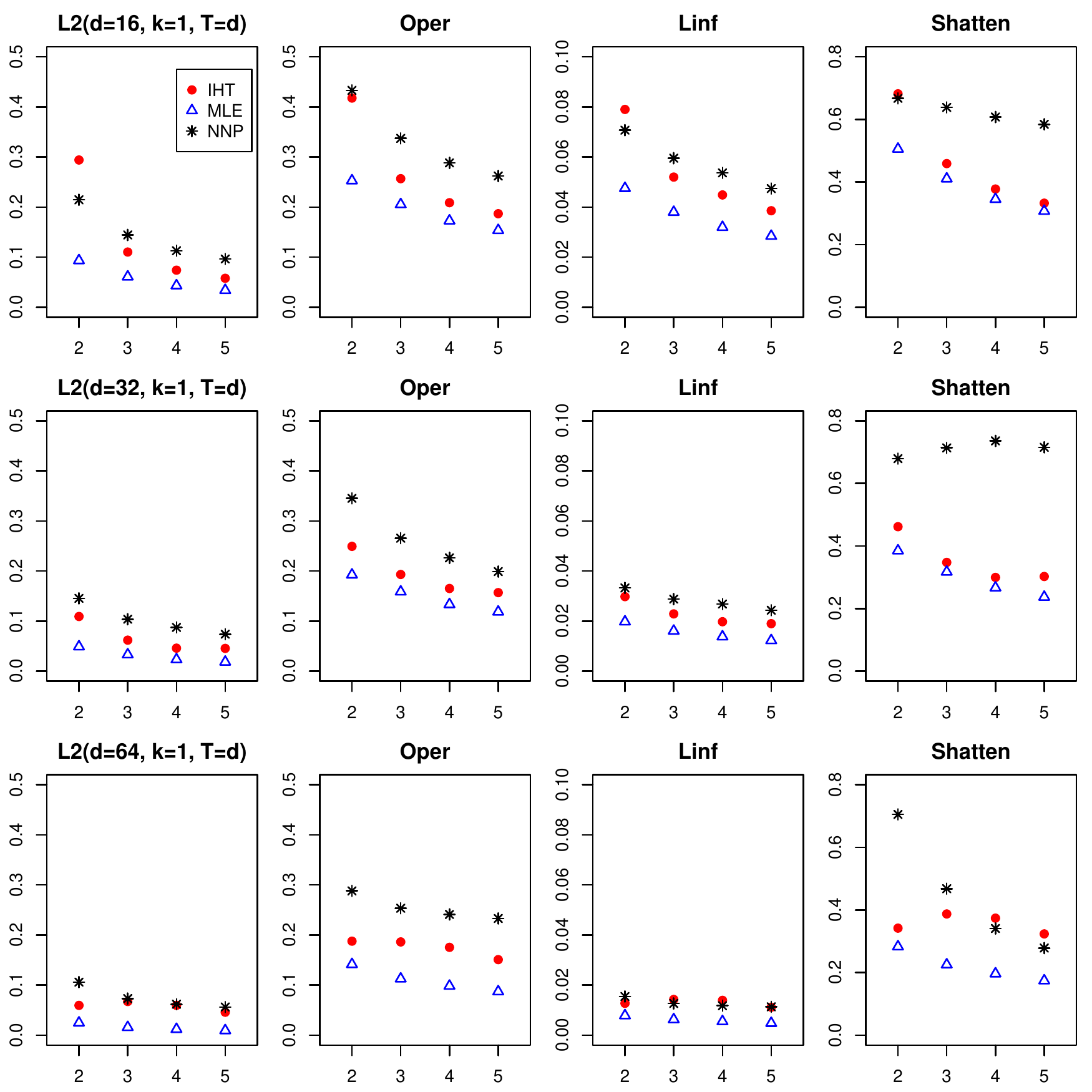}
\vspace{-0.2cm}
\caption{Squared Frobenius norm (L2), operator norm (Oper), entrywise $L_\infty$ norm (Linf) and Shatten $1$ norm (Shatten)  of $\hat \Theta - \Theta$ in function of $\alpha$ (and therefore in function og the number of settings $N$), for different values of $d$ for replication $T=d$ and $k=1$ using the three methods described.}  \label{fig:4}
\end{figure}
\end{center}

Figure \ref{fig:4} and \ref{fig:5} present the result when the number $T$ of replications is $d$ and when the true rank is 1 or 2 respectively, and for four different values of $d$. We provide average values of squared Frobenius norm, operator norm, entrywise $L_\infty$ norm, and Shatten $1$ norm of $\hat \Theta- \Theta$ averaged over 100 iterations. Red dots, blue blank triangles, and black asterisks are average value of IHT, MLE, and NNP, respectively. Intuitively when $\alpha$ increases these risks will decrease. Our estimator shows almost comparable result to the MLE except in the case where  $d=4$ and $\alpha=2$. In this case, IHT estimates $\Theta$ by $0$ a few times and pretty well for most cases so that on average the Frobenius norm is still large.  Figures \ref{fig:6} and \ref{fig:7} present the result when the replication is $10d$ and when the true rank is 1  or 2 respectively, and for four different values of $d$. They show similar patterns as for the cases $T=d$, but we can see that IHT performs well especially for $\alpha \in \{4,5\}$. An interesting feature that all these pictures illustrate is that IHT performs the best relatively to other methods for a large number of replication $T$, and for difficult problems with high $d$ and $k$ (see in particular the plots in Figure~\ref{fig:7} for large $d$). Note that our method is computationally more efficienct than the other two methods in the sense that when $d=64, \alpha = 5, k=2$, IHT takes about 40 seconds while MLE (and even NNP) takes about 2.5 minutes for one iteration, on a regular laptop, i.e.~it is about four times slower.

\begin{center}
\begin{figure}
 \includegraphics[width=1\textwidth]{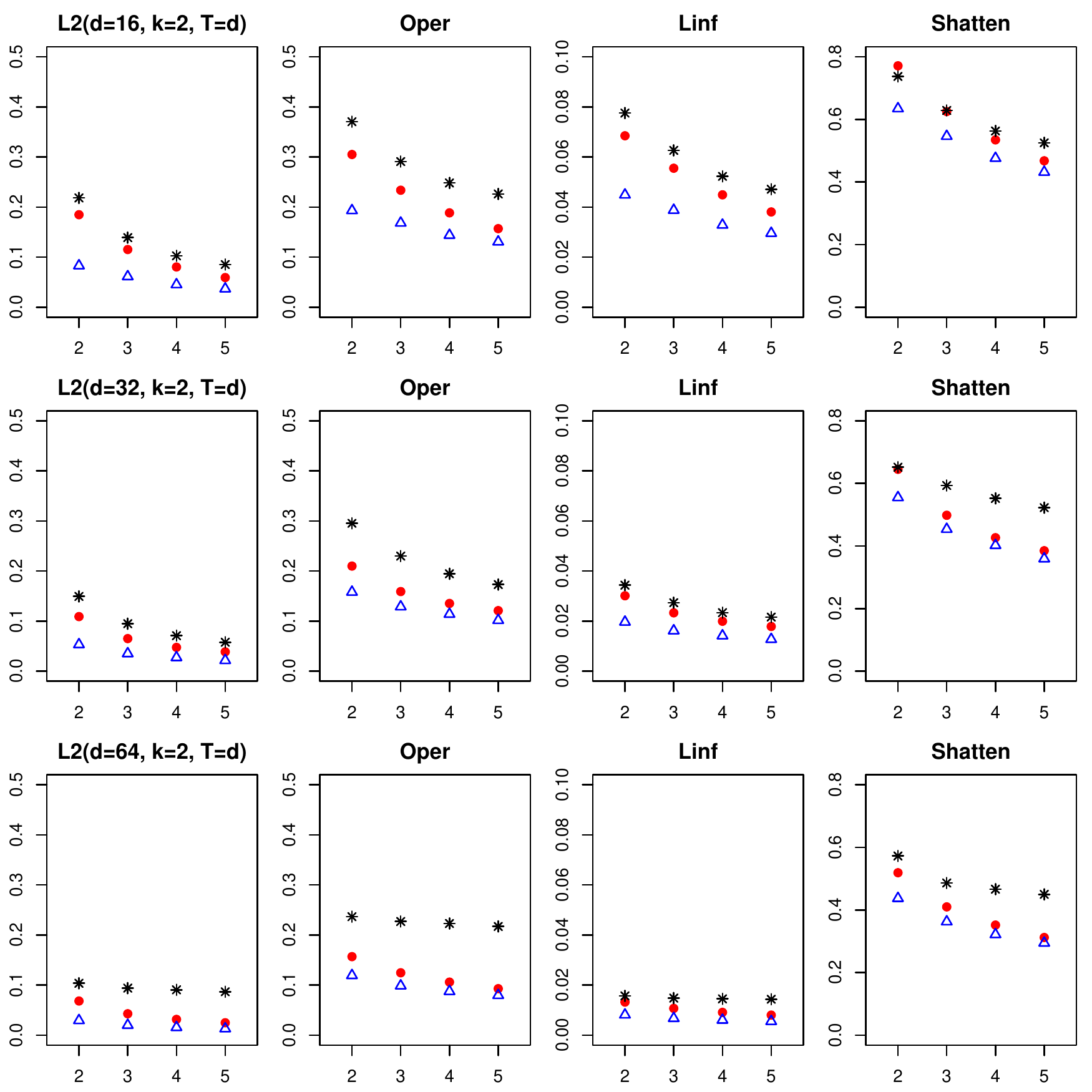}
\vspace{-0.2cm}
\caption{Squared Frobenius norm (L2), operator norm (Oper), entrywise $L_\infty$ norm (Linf) and Shatten $1$ norm (Shatten)  of $\hat \Theta - \Theta$ in function of $\alpha$, for different values of $d$ for replication $T=d$ and $k=2$ using three methods.}  \label{fig:5}
\end{figure}
\end{center}

\begin{center}
\begin{figure}
 \includegraphics[width=1\textwidth]{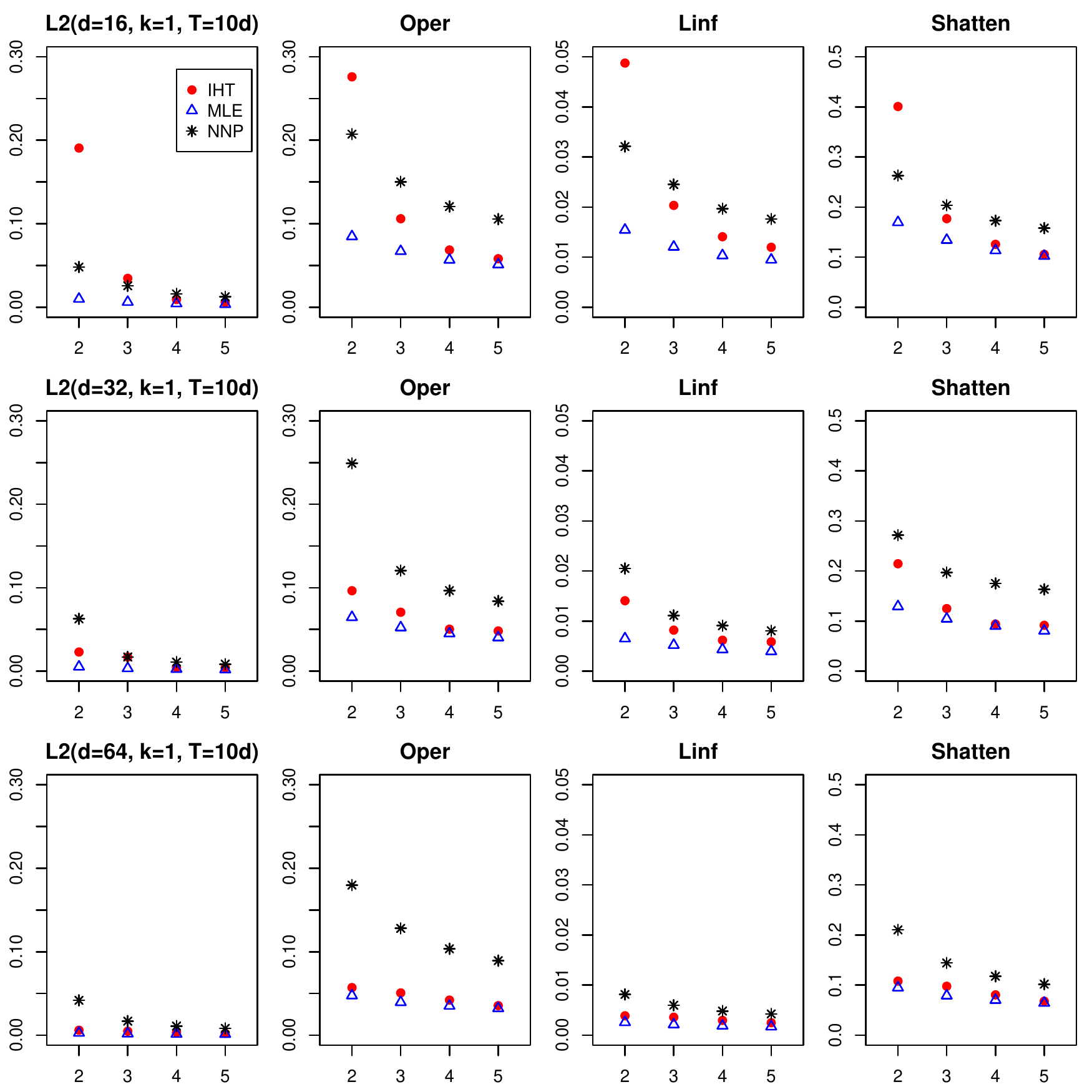}
\vspace{-0.2cm}
\caption{Squared Frobenius norm (L2), operator norm (Oper), entrywise $L_\infty$ norm (Linf) and Shatten $1$ norm (Shatten)  of $\hat \Theta - \Theta$ in function of $\alpha$, for different values of $d$ for replication $T=10d$ and $k=1$ using three methods.}  \label{fig:6}
\end{figure}
\end{center}

\begin{center}
\begin{figure}
 \includegraphics[width=1\textwidth]{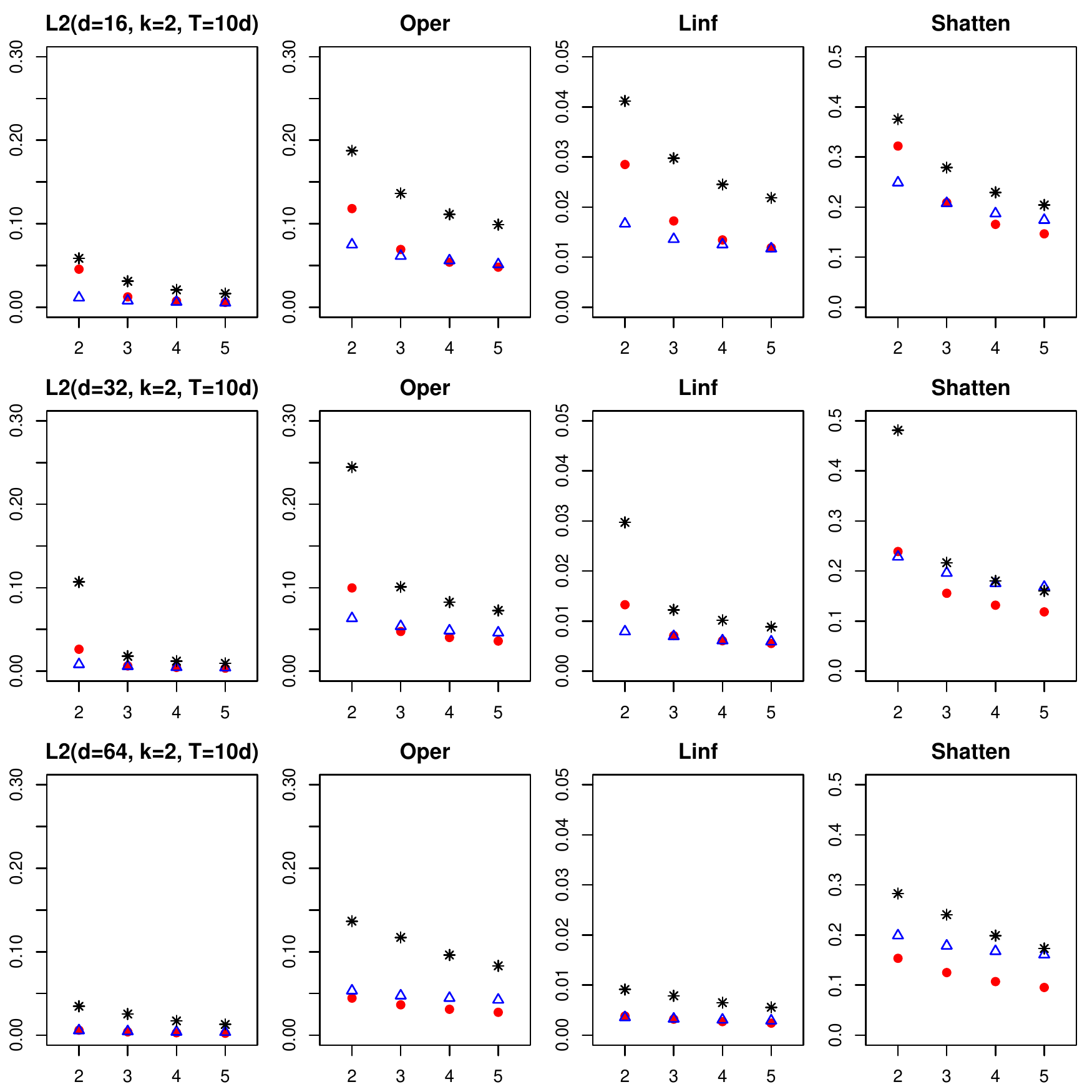}
\vspace{-0.2cm}
\caption{Squared Frobenius norm (L2), operator norm (Oper), entrywise $L_\infty$ norm (Linf) and Shatten $1$ norm (Shatten)  of $\hat \Theta - \Theta$ in function of $\alpha$, for different values of $d$ for replication $T=10d$ when $k=2$ using three methods.}  \label{fig:7}
\end{figure}
\end{center}

\section{Proofs}

\subsection{Preliminaries}
For convenience in writing the proofs, we introduce the following quantities.

We write, for integers $q,q'$, the vectorisation of a $q \times q'$ matrix (where $q'>0$) $A$ by stacking the rows of $A \in \rr^{q \times q'}$ as 
$$\text{vec}(A) = (A_{1,1}, A_{1,2},\ldots, A_{1,q'},A_{2,1},\ldots,A_{2,q'},\ldots,A_{q,1},\ldots,A_{q,q'})^T.$$
We write the Kronecker product between two matrices $A$ and $B$ as $A \otimes B$.

Consider the $n \times d^2$ matrix $\XX$ such that $\mathcal X_{i,M} = X^i_{m,m'}$ for $i \leq n$ and for $M = (m-1)d+ m'\leq d^2$ where $m,m'=1,\ldots,d$:
\begin{center}
$ \XX := \left[ \begin{array}{c}
 \text{vec}(X^1)^T\\
 \text{vec}(X^2)^T\\
\vdots \\
 \text{vec}(X^n)^T\\
\end{array}
\right]
= \left[ \begin{array}{cccccccc}
X^1_{1,1} & X^1_{1,2} & \ldots & X^1_{1,d} & \ldots & X^1_{d,1} & \ldots & X^1_{d,d} \\
X^2_{1,1} & X^2_{1,2} & \ldots & X^2_{1,d} & \ldots & X^2_{d,1} & \ldots & X^2_{d,d} \\
\vdots & \vdots & \vdots & \vdots & \vdots & \vdots & \vdots & \vdots \\
X^n_{1,1} & X^n_{1,2} & \ldots & X^n_{1,d} & \ldots & X^n_{d,1} & \ldots & X^n_{d,d}
\end{array}
\right].
$
\end{center}

 Let $\mathcal{R}(k)$ be the set of vectorization of matrices in $\MM(k)$, that is,
$\mathcal{R}(k) = \{\text{vec}(A): A \in \MM(k) \}$. If $A \in \MM(k)$, then $\RR(k)$ contains a vector $\bf a$ of dimension $d^2$ such that ${\bf{a}}_{M} = A_{m,m'}$ for $M = (m-1)d + m' \in \{1,\ldots,d^2\}$. 

Assumption~\ref{ass:designbis} can be rewritten as follows in this vectorized new notation.
\begin{ass}\label{ass:design}
Let $K \leq d$. For any $k\leq 2K$, it holds that
\begin{align*}
\sup_{A \in \mathcal{R}(k)}\Big|  \frac{1}{n}\|\mathcal X A\|_2^2 - \|A\|_2^2 \Big| \leq \tilde c_n(k)\|A\|_2^2,
\end{align*}
where $\tilde c_n(k)>0$.
\end{ass}

Assumption~\ref{ass:design} actually implies the following lemma that bounds the scalar products rather than the norms.
\begin{lem}\label{lem:tra}
If Assumption~\ref{ass:design} holds, then for any $k \leq K$, we have that
\begin{align}
&\sup_{A, B \in \mathcal{R}(k)^2} \Big|  \frac{1}{n}\langle \mathcal X A, \mathcal X B\rangle - \langle A, B\rangle \Big| \nonumber\\ 
&\leq 2\tilde c_n(2k)\|A\|_2 \|B\|_2 =: \|A\|_2 \|B\|_2 c_n(k). \label{RIPtrace}
\end{align}
\end{lem}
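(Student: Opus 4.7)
}
The plan is to reduce the bilinear bound on scalar products to the quadratic bound on norms already supplied by Assumption~\ref{ass:design} via the polarization identity. First I observe that for any $A,B\in\mathcal{R}(k)$, the vectors $A+B$ and $A-B$ correspond to $d\times d$ matrices whose ranks are at most $2k$ (rank is subadditive), so $A\pm B\in\mathcal{R}(2k)$, which lets me feed these vectors into Assumption~\ref{ass:design} with parameter $2k\le 2K$.

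Applying the polarization identity to both sides yields
\begin{align*}
\tfrac{1}{n}\langle \mathcal X A,\mathcal X B\rangle - \langle A,B\rangle
&= \tfrac{1}{4}\Bigl[\bigl(\tfrac{1}{n}\|\mathcal X(A+B)\|_2^2 - \|A+B\|_2^2\bigr) \\
&\qquad - \bigl(\tfrac{1}{n}\|\mathcal X(A-B)\|_2^2 - \|A-B\|_2^2\bigr)\Bigr].
\end{align*}
Taking absolute values, using the triangle inequality, and applying Assumption~\ref{ass:design} to each of the two bracketed terms gives
\[
\Bigl|\tfrac{1}{n}\langle \mathcal X A,\mathcal X B\rangle - \langle A,B\rangle\Bigr|
\le \tfrac{\tilde c_n(2k)}{4}\bigl(\|A+B\|_2^2 + \|A-B\|_2^2\bigr).
\]

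Next I invoke the parallelogram identity $\|A+B\|_2^2 + \|A-B\|_2^2 = 2(\|A\|_2^2+\|B\|_2^2)$, which turns the bound into $\tfrac{\tilde c_n(2k)}{2}(\|A\|_2^2+\|B\|_2^2)$. To convert this into the desired product $\|A\|_2\|B\|_2$, I use the fact that both sides of the target inequality are homogeneous of degree one in $A$ and in $B$ separately: rescaling $A\mapsto A/\|A\|_2$ and $B\mapsto B/\|B\|_2$ (the case where one of them vanishes is trivial) keeps $A,B\in\mathcal{R}(k)$ and reduces the inequality to the unit-norm case $\|A\|_2=\|B\|_2=1$, in which the previous bound reads simply $\tilde c_n(2k)$. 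Undoing the rescaling then gives $\tilde c_n(2k)\|A\|_2\|B\|_2$, which is even sharper than the claimed $2\tilde c_n(2k)\|A\|_2\|B\|_2$.

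There is no real obstacle; the only subtlety worth flagging is the rank-subadditivity observation that keeps $A\pm B$ inside the set $\mathcal{R}(2k)$ to which Assumption~\ref{ass:design} applies, and the scaling step needed to swap the arithmetic-mean quantity $\|A\|_2^2+\|B\|_2^2$ for the geometric-type product $\|A\|_2\|B\|_2$ that appears in the statement.
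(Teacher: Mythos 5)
Your proof is correct, and it takes a slightly but genuinely different route from the paper's. The paper first normalises to $\|A\|_2=\|B\|_2=1$ and then uses the one-sided identity $\langle A,B\rangle=\tfrac12\bigl(\|A\|_2^2+\|B\|_2^2-\|A-B\|_2^2\bigr)$, so it must apply Assumption~\ref{ass:design} three times: to $A$ and $B$ (rank $\le k$) and to $A-B$ (rank $\le 2k$), and then sum the three error terms. You instead use the symmetric polarization identity in $A+B$ and $A-B$ together with the parallelogram law, so only the two rank-$\le 2k$ vectors $A\pm B$ ever enter the assumption. Both routes rest on the same two ingredients (a quadratic-to-bilinear identity plus rank subadditivity), but yours is tidier in two respects: it yields the sharper constant $\tilde c_n(2k)$ rather than $2\tilde c_n(2k)$, and it sidesteps a bookkeeping subtlety in the paper's version, where the term $\bigl|\tfrac{1}{n}\|\mathcal X(A-B)\|_2^2-\|A-B\|_2^2\bigr|$ is bounded by $2\tilde c_n(2k)$ even though $\|A-B\|_2^2$ can be as large as $4$ for unit-norm $A,B$ (your parallelogram-law cancellation makes this issue moot). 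Your homogeneity/rescaling step and the observation that $\mathcal R(k)$ is closed under nonzero scalar multiples are both sound; the only caveat, which applies equally to the paper's own argument, is that the polarization identities used are the real ones, consistent with the paper's implicit treatment of the inner product and Frobenius norm as real throughout.
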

The proof of this lemma is in Subsection~\ref{proof:tra}.

\subsection{Proof of Theorem~\ref{th:mainthm2}}

Let $\Omega$ be the set of vectors of $\mathbb R^d$ and of norm $1$. 

\paragraph{1. Explicit writing of the quantities}

For any matrices $U,V$ of dimension $d \times m$ with $m\geq 1$, we set
\begin{align*}
 \tilde \gamma^r(U, V) &:= U^T \Big(\frac{1}{n}\sum_{i\leq n} (X^i)^T [Y_i  -  \mathrm{tr}\big( (X^i)^T\hat \Theta^{r-1}\big)] \Big)V \\
&= U^T \Big(\frac{1}{n}\sum_{i\leq n} (X^i)^T Y_i^r \Big)V =: U^T \hat \Psi^r V
\end{align*}
where $Y^{r}_i = Y_i  -  \mathrm{tr}\big( (X^i)^T\hat \Theta^{r-1}\big)$ and $\tilde \gamma^r(U,V) \in \rr^{m \times m}$. Also we set $\Psi^r:= \Theta - \hat \Theta^{r-1}$ and
$$\gamma^r(U, V) := U^T \Big(\Theta - \hat \Theta^{r-1}\Big) V = U^T \Psi^r V \in \rr^{m \times m}.$$
Note that $Y_i^r = \mathrm{tr}\big((X^i)^T \Psi^r \big)+\epsilon_i$ by linearity of the trace.

Let $\mathbf{u},\mathbf{v} \in \Omega$, then $\tilde \gamma^r(\mathbf{u},\mathbf{v})$ is a scalar:
\begin{align*}
\tilde \gamma^r(\mathbf{u}, \mathbf{v}) &= \sum_{m, m'\leq d} \mathbf{u}_{m} \mathbf{v}_{m'}\frac{1}{n}\sum_{i\leq n} X^i_{m',m}Y_i^r\\
&= \sum_{m, m'\leq d} \mathbf{u}_{m} \mathbf{v}_{m'} \frac{1}{n}\sum_{i \leq n} X^i_{m',m}\Big(\mathrm{tr}\big((X^i)^T\Psi^r\big) + \epsilon_i\Big)\\
&= \sum_{m, m'\leq d} \mathbf{u}_{m} \mathbf{v}_{m'}\frac{1}{n}\sum_{i\leq n} X^i_{m',m}\Big(\sum_{k,k'\leq d} X^i_{k,k'}\Psi_{k,k'}^r + \epsilon_i\Big).
\end{align*}

Let $\mathcal U$ be the column vector of dimension $d^2$ such that $\UU = \text{vec}(\mathbf{u} \mathbf{v}^T) = \mathbf{u} \otimes \mathbf{v}$, that is, 
$\UU_{M} = \mathbf{u}_{m} \mathbf{v}_{m'}$ for $M = (m-1)d+ m'$. Note that $\mathcal U \in \RR(1)$, and that  $\|\mathcal U\|_2^2 = \sum_{m,m' \leq d} (\mathbf{u}_{m} \mathbf{v}_{m'})^2  =  \sum_{m} (\mathbf{u}_{m})^2 \sum_{m'}(\mathbf{v}_{m'})^2 = 1$. Consider the $n \times d^2$ matrix $\XX$ such that $\mathcal X_{i,M} = X^i_{m,m'}$ for $i \leq n$ . Consider the column vector $\psi^r \in \rr^{d^2}$ where $\psi^r = \text{vec}(\Psi^r)$. Then we have
\begin{align}
\tilde \gamma^r(\mathbf{u}, \mathbf{v}) &=\frac{1}{n} \sum_{M \in \{1, \ldots, d^2\}} \mathcal U_{M}\sum_{i\leq n} (\mathcal X_{i,M})^T \Big(\sum_{K \in \{1, \ldots, d^2\}} \mathcal X_{i,K} \mathcal \psi_K^r + \epsilon_i \Big)\nonumber\\
&=\frac{1}{n} (\mathcal U)^T (\mathcal X)^T\Big( \mathcal X \psi^r + \epsilon\Big) \nonumber\\
&= \frac{1}{n}\Big( \langle \mathcal X \mathcal U, \mathcal X \psi^r\rangle + \langle \mathcal X \mathcal U, \epsilon \rangle \Big),\label{simple1}
\end{align}
where here $\langle .,.\rangle$ is the classic vectorial scalar product on $\mathbb R^n$. 
Also by definition of $\UU$ and $\psi^r$, we have
\begin{equation}\label{simple2}
\gamma^r(\mathbf{u},\mathbf{v}) = \langle \UU, \psi^r \rangle.
\end{equation}

The last equation implies that
\begin{align}
&\sup_{\mathbf{u}, \mathbf{v} \in \Omega} |\tilde \gamma^r(\mathbf{u}, \mathbf{v}) - \gamma^r(\mathbf{u}, \mathbf{v})| \nonumber \\
&\leq \sup_{\mathcal U \in \RR(1), \|\mathcal U\|_2=1} \Big|  \frac{1}{n}\langle \mathcal X \mathcal U, \mathcal X \psi^r\rangle - \langle \UU, \psi^r \rangle\Big|+ \sup_{\mathcal U \in \RR(1), \|\mathcal U\|_2=1}  \Big| \frac{1}{n} \langle \mathcal X \mathcal U, \epsilon \rangle\Big|. \label{eq:dec}
\end{align}

\paragraph{2. Bound on the stochastic term}

We first bound the second term in (\ref{eq:dec}) with the following lemma.
\begin{lem}\label{helo}
Assume that $c_n(1) \leq 1$ (note that $c_n(1) \leq c_n(K)$ for $K \geq 1$). It holds with probability larger than $1-\delta$ that
\begin{equation}\label{eq:1}\sup_{A \in \RR(1)} \Big| \frac{1}{n}  \langle \mathcal X A, \epsilon \rangle \Big| \leq   C \|A\|_2 \sqrt{d \frac{\log(1/\delta)}{n}} =: \|A\|_2 \upsilon_n, 
\end{equation}
where $C$ is an universal constant.
\end{lem}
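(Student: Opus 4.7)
The plan is to combine a pointwise Gaussian tail bound with a covering argument exploiting the low-dimensional structure of $\RR(1)$. First I would observe that, conditionally on $\XX$, the scalar $\langle \XX A, \epsilon\rangle$ is centered Gaussian with variance $\|\XX A\|_2^2$. Since $A\in\RR(1)\subset\RR(2)$, the hypothesis $c_n(1)\leq 1$ and Assumption~\ref{ass:design} give $\|\XX A\|_2^2/n\leq 2\|A\|_2^2$, and standard Gaussian tail bounds yield
$$\mathbb P\Big(\big|\tfrac{1}{n}\langle\XX A,\epsilon\rangle\big|>t\,\big|\,\XX\Big)\leq 2\exp(-nt^2/4)$$
for every fixed unit-norm $A\in\RR(1)$.

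Next I would replace the supremum over the non-linear variety of unit-norm rank-$1$ matrices by a finite maximum. Any such matrix is of the form $\mathbf u\mathbf v^T$ with $\mathbf u,\mathbf v$ on the unit $(d-1)$-sphere, so I would take standard $(\eta/2)$-nets $\mathcal N_u,\mathcal N_v$ for the sphere (each of cardinality at most $(6/\eta)^d$) and set $\mathcal N=\{\mathbf u\mathbf v^T:\mathbf u\in\mathcal N_u,\mathbf v\in\mathcal N_v\}$, of cardinality at most $(6/\eta)^{2d}$. A union bound of the pointwise estimate over $\mathcal N$ then gives, with probability at least $1-\delta$,
$$\max_{A_0\in\mathcal N}\big|\tfrac{1}{n}\langle\XX A_0,\epsilon\rangle\big|\leq C\sqrt{\tfrac{d+\log(1/\delta)}{n}}\,,$$
which is of the claimed order $\sqrt{d\log(1/\delta)/n}$ up to constants.

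The key step is then to close the gap between $\mathcal N$ and the whole unit sphere of $\RR(1)$, despite $\RR(1)$ not being a linear subspace. For $A=\mathbf u\mathbf v^T$ approximated by $A_0=\mathbf u_0\mathbf v_0^T\in\mathcal N$, I would use the bilinear identity
$$A-A_0=(\mathbf u-\mathbf u_0)\mathbf v^T+\mathbf u_0(\mathbf v-\mathbf v_0)^T,$$
which exhibits the difference as a sum of two rank-$1$ matrices of Frobenius norm at most $\eta/2$ each. Renormalizing each summand produces two unit rank-$1$ matrices, so the absolute value of $\tfrac{1}{n}\langle\XX(A-A_0),\epsilon\rangle$ is at most $\eta$ times the supremum $S$ we are trying to bound. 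This gives the recursion $S\leq M+\eta\, S$, where $M$ is the maximum over $\mathcal N$; taking $\eta=1/2$ yields $S\leq 2M$ and the lemma.

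The conceptually delicate step is precisely this bootstrap, since one does not have access here to the usual ``linear doubling'' trick valid on balls in a normed space; the rank-$1$ structure is preserved only through the bilinear parametrization, and rewriting the difference as a genuine sum of rank-$1$ elements is what makes the covering argument self-contained. Everything else is a routine combination of Gaussian concentration with volumetric bounds on the $(d-1)$-sphere.
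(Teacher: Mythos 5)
Your proof is correct, and it reaches the lemma by a genuinely different route from the paper. The paper runs a full dyadic chaining argument: it takes a sequence of $2^{-i}$-covers $\mathcal B_i$ of the unit ball of $\RR(1)$ (with cardinality controlled by Lemma 3.1 of Cand\`es--Plan), writes $A=\sum_i(\mathbf u_i-\mathbf u_{i-1})$, observes that each increment is a difference of two rank-one matrices and hence lies in $\RR(2)$, applies the Gaussian tail bound together with the restricted isometry on $\RR(2)$ to each link, and closes with a union bound over all scales with geometrically shrinking failure probabilities $\delta_{i,j}$. You instead use a single-scale net built as a product of two sphere nets and handle the nonlinearity of the rank-one variety through the bilinear identity $A-A_0=(\mathbf u-\mathbf u_0)\mathbf v^T+\mathbf u_0(\mathbf v-\mathbf v_0)^T$, which splits the residual into two rank-one pieces of small Frobenius norm; reabsorbing these into the supremum gives the self-bounding inequality $S\leq M+\eta S$ and hence $S\leq 2M$ at $\eta=1/2$. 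Your version is more elementary (no multi-scale bookkeeping, no appeal to the external covering-number lemma, and only the rank-one/rank-two isometry bound is ever invoked), at the cost of needing the a priori finiteness of $S$ to run the bootstrap --- which is immediate here since $S$ is a supremum of a continuous function over a compact set. Both arguments share the same minor cosmetic looseness in the final step, where the $\sqrt{(d+\log(1/\delta))/n}$ coming from the net cardinality is absorbed into $\sqrt{d\log(1/\delta)/n}$, which implicitly requires $\delta$ bounded away from $1$; and both should, strictly speaking, account for the matrices being complex (doubling the effective dimension in the covering numbers), which changes only the universal constant.
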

Its proof is in Subsection~\ref{proof:helo}. 
Lemma \ref{helo} implies that on an event of probability larger than $1-\delta$, we can bound the stochastic term in (\ref{eq:dec}) 
\begin{equation}\label{eq:1_1}
\sup_{\mathcal U \in \RR(1), \|\mathcal U\|_2=1}  \Big| \frac{1}{n} \langle \mathcal X \mathcal U, \epsilon \rangle\Big| \leq \upsilon_n.
\end{equation}
Let $\xi$ be an event of probability larger than $1-\delta$ where the above holds.

\paragraph{3. Bound on the first term in (\ref{eq:dec}) provided that the rank $k^r$ of $\Psi^r$ is smaller than $2k$}

Let us assume, only for this Paragraph 3.~of the proof, that the rank $k^r$ of $\Psi^r$ is smaller than $2k \leq K$. By Lemma~\ref{lem:tra}, we can apply Equation~\eqref{RIPtrace} (since $k^r \leq 2k \leq K$), and combining this with the fact that $\|\UU \|_2 =1$, we have
\begin{equation}\label{eq:2}\sup_{\mathcal U \in \RR(1), \|\UU \|_2 = 1} \Big| \frac{1}{n}\langle \mathcal X \mathcal U, \mathcal X \psi^r\rangle - \langle \mathcal U, \psi^r\rangle\Big| \leq c_n(2k) \|\psi^r\|_2.\end{equation}


By combining Equation~\eqref{eq:dec}, \eqref{eq:1_1} and~\eqref{eq:2}, and using $\| \psi^r\|_2 = \|\Psi^r\|_2 $, we then have in the case that $k^r \leq 2k\leq K$ that on $\xi$ 
\begin{align}\label{eq:linf}
\sup_{\mathbf{u},\mathbf{v} \in \Omega} | \tilde \gamma^r(\mathbf{u},\mathbf{v}) - \gamma^r(\mathbf{u},\mathbf{v})| \leq  c_n(2k)\|\Psi^r\|_2  + \upsilon_n.
\end{align}
Since the previous result holds in the worst case of $ \mathbf{u},\mathbf{v} \in \Omega$, we directly have on $\xi$ the corresponding entrywise result whenever $k^r \leq 2k$
\begin{align}\label{eq:linf2}
\sup_{U, V \in \MM_\Omega} \| \tilde \gamma^r(U,V) - \gamma^r(U,V)\|_{\infty} \leq  c_n(2k) \|\Psi^r\|_2  + \upsilon_n.
\end{align}
By definition, we know $\tilde \gamma^r(U,V) = U^T \hat \Psi^r V$ and $\gamma^r(U,V)= U^T \Psi^r V = U^T(\Theta - \hat \Theta^{r-1})V$, which gives on $\xi$ whenever $k^r \leq 2k$,
\begin{align}\label{eq:linf4}
\sup_{U, V \in \MM_\Omega} \| U^T (\hat \Psi^r + \hat \Theta^{r-1} - \Theta) V\|_{\infty} \leq  c_n(2k)\|\Psi^r\|_2  + \upsilon_n.
\end{align}

Note also by definition of the thresholding process, the matrix
$$D = (U^{r})^T  (\hat \Psi^r + \hat \Theta^{r-1}) V^r - \lfloor (U^r)^T (\hat \Psi^r + \hat \Theta^{r-1}) V^r \rfloor_{T_r},$$
is such that it is diagonal with all diagonal elements smaller than $T_r$.
Let $\tilde U = (U^r)^TU \in \MM_\Omega$ and $\tilde V = (V^r)^T V\in \MM_\Omega$ for $U,V \in \MM_\Omega$. By elementary calculations, we have
$$\tilde U^T D \tilde V = \big(\sum_k \tilde U_{k,i} D_{k,k} \tilde V_{k,j}\big)_{i,j},$$
and so we have that
\begin{align*}
 \|\tilde U^T D \tilde V\|_{\infty} &\leq  \sup_{i,j} |\sum_k \tilde U_{k,i} D_{k,k} \tilde V_{k,j}|\\ 
&\leq T_r\sup_{i,j} \sum_k |\tilde U_{k,i} \tilde V_{k,j}| \leq T_r\sup_{i,j} \sqrt{\|\tilde U_{,i}\|_2 \|\tilde V_{,j}\|_2} = T_r.
\end{align*}

By definition of $\hat \Theta^r$, we have
\begin{align*}\tilde U^T D \tilde V &= U^T  (\hat \Psi^r + \hat \Theta^{r-1}) V -U^T U^{r}\lfloor (U^{r})^T (\hat \Psi^r + \hat \Theta^{r-1}) V^{r}\rfloor_{T_r} (V^{r})^T V\\
 &= U^T  (\hat \Psi^r + \hat \Theta^{r-1}) V - U^T \hat \Theta^{r} V,
\end{align*}
so this implies that
$$\sup_{U,V \in \MM_\Omega}  \|U^T  (\hat \Psi^r + \hat \Theta^{r-1}) V -U^T \hat \Theta^r V \|_{\infty} \leq T_r.$$
Combining this with Equation~\eqref{eq:linf4}, we obtain that on $\xi$ and whenever $k^r \leq 2k$
\begin{align}
\sup_{U,V \in \MM_\Omega} \| U^T \Psi^{r+1} V\|_{\infty} &= \sup_{ U, V \in \MM_\Omega} \| U^T (\Theta-\hat \Theta^{r} ) V\|_{\infty} \nonumber \\ 
 &\leq  c_n(2k)\|\Psi^r\|_2  + \upsilon_n + T_r. \label{eq:linf5}
\end{align}

\paragraph{4. Induction}


We now stop assuming that the rank $k^r$ of $\Psi^r$ is smaller than $2k$, and we consider the general case.

We are going to prove by induction that on $\xi$, for any integer $r\geq 1$, we have that (i) the rank of $\hat \Theta^{r-1}$ is smaller than $k$, and (ii) $\|\Psi^r\|_2 \leq 2\sqrt{2k }T_{r-1} := C_r$.

For $r = 1$, since $\hat \Theta^0 = 0$, then its rank is $0$ and is therefore bounded by $k$  and (i) is satisfied. Moreover, since $T_0 = B \geq \|\Theta\|_2 = \|\Theta - \hat \Theta^0\|_{2} = \|\Psi^1\|_{2}$, then (ii) is satisfied as well.

Now assume that (i) and (ii)  hold on $\xi$ for a given $r$ (as it holds for $r=1$ not only on $\xi$ but on the entire probability space). By induction assumption (i), we have that on $\xi$ the rank of $\hat \Theta^{r-1}$ is smaller than $k$, which implies that the rank of $\Psi^{r} = \Theta - \hat \Theta^{r-1}$ is smaller than $k + k = 2k$. 

Because we have that $k^r \leq 2k$, Equation~\eqref{eq:linf5} applies and on $\xi$ 
\begin{align}
\sup_{U, V \in \MM_\Omega} \| U^T  \Psi^{r+1} V\|_{\infty} &\leq  c_n(2k) C_r + \upsilon_n + T_r \nonumber\\
&\leq  2\sqrt{2k } c_n(2k) T_{r-1} + \upsilon_n + T_r \leq 2T_r,\label{eq:linf7}
\end{align}
by definition of $T_r$ and since $2\sqrt{2k}c_n(2k) \leq 4 \sqrt{K}\tilde c_n(2K)$ (since $2k \leq K$). 
Moreover, in the same way, we have that on $\xi$ (see Equation~\eqref{eq:linf4} since $k^r \leq 2k$)
\begin{align}\label{eq:linf8}
\sup_{U, V \in \MM_\Omega} \| U^T (\hat \Psi^r + \hat \Theta^{r-1} - \Theta) V\|_{\infty} \leq  c_n(2k)C_r + \upsilon_n \leq T_r.
\end{align}

Let us now state the following lemma.
\begin{lem}\label{lem:mati}
Let $M \in \rr^{d_1 \times d_2}$ be a matrix (with $d_1\geq d_2$), with singular values $(\lambda_j)_j$ ordered in decreasing order (all positive). For any $j \leq d_2$ and for any collection of orthogonal vectors $(\mathbf{w}^{j'})_{j'\leq j-1}$, we have 
$$\lambda_{j} \leq  \sup_{\mathbf{u}\in \rr^{d_1}, \mathbf{v}\in \rr^{d_2} : \| \mathbf{u}_2\|=1, \|\mathbf{v}\|_2=1,\mathbf{u} \perp (\mathbf{w}^{j'})_{j'\leq j-1}} |\mathbf{u}^T M \mathbf{v}|.$$
\end{lem}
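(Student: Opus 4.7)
The plan is to recognize that this is essentially the Courant--Fischer min--max characterization of the singular values of $M$, and to prove it via a dimension-counting argument using the SVD of $M$.

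First, I would introduce the singular value decomposition $M = U \Sigma V^T$, where $U = [u_1 \mid \cdots \mid u_{d_1}]$ and $V = [v_1 \mid \cdots \mid v_{d_2}]$ have orthonormal columns and $\Sigma$ is the diagonal matrix with $\lambda_1 \geq \lambda_2 \geq \cdots \geq \lambda_{d_2}$ on the diagonal. Second, I would consider the two subspaces $U_j := \mathrm{span}(u_1, \ldots, u_j)$ of dimension $j$, and $W^{\perp} := \{ \mathbf{u} \in \mathbb{R}^{d_1} : \mathbf{u} \perp \mathbf{w}^{j'} \text{ for all } j' \leq j-1\}$ of dimension at least $d_1 - (j-1)$. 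Since $\dim(U_j) + \dim(W^{\perp}) \geq j + (d_1 - j + 1) = d_1 + 1$, the formula $\dim(U_j \cap W^{\perp}) \geq \dim(U_j) + \dim(W^{\perp}) - d_1$ guarantees the intersection contains a unit vector $\mathbf{u}^{*}$.

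Third, writing $\mathbf{u}^{*} = \sum_{i=1}^j a_i u_i$ with $\sum_{i=1}^j a_i^2 = 1$, I would compute
\begin{equation*}
M^T \mathbf{u}^{*} = \sum_{i=1}^j a_i \lambda_i v_i,
\qquad
\|M^T \mathbf{u}^{*}\|_2^2 = \sum_{i=1}^j a_i^2 \lambda_i^2 \geq \lambda_j^2 \sum_{i=1}^j a_i^2 = \lambda_j^2,
\end{equation*}
where the inequality uses the monotonicity $\lambda_1 \geq \cdots \geq \lambda_j$. Fourth, setting $\mathbf{v}^{*} := M^T \mathbf{u}^{*} / \|M^T \mathbf{u}^{*}\|_2$ (a unit vector, assuming $\lambda_j > 0$, otherwise the bound is trivial), one has $(\mathbf{u}^{*})^T M \mathbf{v}^{*} = \|M^T \mathbf{u}^{*}\|_2 \geq \lambda_j$. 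Since $(\mathbf{u}^{*}, \mathbf{v}^{*})$ is feasible in the supremum, the claim follows.

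The main obstacle, such as it is, lies in the dimension-counting step: one must be careful that the orthogonality constraint encoded by the $\mathbf{w}^{j'}$'s removes at most $j-1$ dimensions (which is why the hypothesis allows them to be merely orthogonal rather than necessarily independent of each other), so that a nonzero vector survives in $U_j \cap W^{\perp}$. Everything else is a direct SVD computation.
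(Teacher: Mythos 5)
Your proof is correct and is essentially the paper's own argument: both use a dimension count to extract a unit vector lying in the span of the top $j$ singular vectors and in the orthogonal complement of the $\mathbf{w}^{j'}$, and then conclude by the same direct SVD computation. The only (cosmetic) difference is that you run the argument on the left singular vectors in $\rr^{d_1}$, which actually matches the constraint ``$\mathbf{u}\perp(\mathbf{w}^{j'})$'' in the statement more literally than the paper's proof, which works with the right singular vectors and constrains the $\mathbf{v}$-side instead.
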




Write $(\hat \lambda_j^r)_j$ for the singular values of $\hat \Psi^r + \hat \Theta^{r-1}$ ordered in decreasing order and all positive (and $U^{r},V^{r}$ for the diagonalising matrices). Let $U^{*},V^{*}$ be the matrices that diagonalise $\Theta$ and order its singular values in decreasing order on the diagonal and write $(\lambda_j^*)_j$ for its singular values (all positive). By Lemma~\ref{lem:mati}, we know that, for any $j \leq d$,
$$  \hat \lambda_{j}^r \leq  \sup_{\mathbf{u}, \mathbf{v} \in \Omega : \mathbf{u} \perp (U^{*}_{l,.})_{l\leq j-1}} |\mathbf{u}^T (\hat \Psi^r + \hat \Theta^{r-1}) \mathbf{v}|.$$

Therefore, on $\xi$, by Equation~\eqref{eq:linf8}, we know that for any $j \leq d$
$$\hat \lambda_{j}^r \leq  \sup_{\mathbf{u}, \mathbf{v}\in \Omega : \mathbf{u} \perp (U^{*}_{l,.})_{l\leq j-1}} |\mathbf{u}^T  \Theta \mathbf{v}| + T_r  = \lambda_{j}^* + T_r.$$
So since all $\hat \lambda_{j}^r$ that are smaller than $T_r$ are thresholded for constructing $\hat \Theta^r$ (we remind that the $\hat \lambda_j^r$ are the diagonal elements of the diagonal matrix $(U^r)^T  (\hat \Psi^r + \hat \Theta^{r-1}) V^r$, that is thresholded at level $T_r$ in the construction of $\hat \Theta^r$), it means that on $\xi$, the rank of $\hat \Theta^r$ is smaller than the rank of $\Theta$, i.e.~it is smaller than $k$. This proves the first part of the induction (i).

Now let $\breve U^r, \breve V^r$ be the matrices that diagonalise $\Psi^{r+1}$, and let $D^{r+1} = (\breve U^r)^T \Psi^{r+1} \breve V^r$. By \eqref{eq:linf7}, we have that on $\xi$ 
\begin{align*}
\|D^{r+1}\|_{\infty} = \| (\breve U^r)^T  \Psi^{r+1} \breve V^r\|_{\infty} \leq 2T_r.
\end{align*}
Now since the rank of both $\hat \Theta^r$ and $\Theta$ are smaller than $k$ on $\xi$, we know that the rank of $\Psi^{r+1}$, and thus of $D^{r+1}$, is smaller than $2k$. Therefore, we have since $D^{r+1}$ is diagonal and has therefore only $2k$ non-zeros elements that on $\xi$
\begin{align*}
\|D^{r+1}\|_{2} \leq 2\sqrt{2k}T_r,
\end{align*}
which implies that on $\xi$, since the Frobenius norm is invariant by rotation
$$\|\Psi^{r+1}\|_2 = \|D^{r+1}\|_{2}  \leq 2\sqrt{2k}T_r.$$
This concludes part (ii) of the induction and therefore, it concludes the induction.

\paragraph{5. Conclusion}

By the previous induction, we know that on $\xi$, we have
\begin{align}\label{eq:cocococo}
\sup_{\Theta \in \RR(k), \|\Theta\|_2 \leq B, U,V \in \MM_\Omega^2} \| U^T  \Psi^{r+1} V\|_{\infty} \leq  2T_r,
\end{align}
and also that
$$\mathrm{rank}(\hat \Theta^r) \leq k,$$
and also that for any $p>0$
$$\sup_{\Theta \in \RR(k), \|\Theta\|_2 \leq B} \|\Psi^{r+1}\|_{S_p} \leq 2(2k)^{1/p} T_r.$$

This concludes the proof since for $r$ larger than $c_l\log(n)$ with $c_l$ a large enough constant, we have by definition of the sequence $T_r$ that
$$T_r \leq 2\upsilon_n \leq 2C\sqrt{\frac{d\log(1/\delta)}{n}}.$$

\subsection{Proof of Lemma~\ref{lem:tra}}\label{proof:tra}
First, note that for $A \in \mathcal{R}(k), B \in \mathcal{R}(k)$, we have
$$
\|A\|_2\|B\|_2\left| \Big\langle \XX \frac{A}{\|A\|_2}, \XX \frac{B}{\|B\|_2} \Big\rangle - \Big\langle \frac{A}{\|A\|_2},\frac{B}{\|B\|_2} \Big\rangle \right| = |\langle \XX A, \XX B \rangle - \langle A,B \rangle |.
$$
Thus, without loss of generality, we consider $\|A\|_2 = \|B\|_2 = 1$. We know that
$$ \langle \mathcal X A, \mathcal X B\rangle= \frac{\|\mathcal XA\|_2^2 + \|\mathcal X B\|_2^2 - \|\mathcal X (A-B)\|_2^2}{2},$$
and
$$\langle A, B\rangle =  \frac{\|A\|_2^2 + \|B\|_2^2 -  \|A-B\|_2^2}{2}.$$
This gives
\begin{align*}
 \Big|&\frac{1}{n}\langle \mathcal XA, \mathcal X B\rangle - \langle A, B\rangle\Big|\\
&\leq \frac{1}{2}\Big(\Big|\frac{1}{n}\|\mathcal X A\|_2^2 - \|A\|_2^2\Big| + \Big|\frac{1}{n}\|\mathcal X B\|_2^2- \|B\|_2^2 \Big| +  \Big|\frac{1}{n}\|\mathcal X(A-B)\|_2^2 - \|A-B\|_2^2\Big|\Big).
\end{align*}
By Assumption~\ref{ass:design}, using $A-B \in \RR(2k)$, we have for $k \leq K$,
$$\big|\langle \mathcal X A, \mathcal X B\rangle - \langle A, B\rangle\big|\leq \frac{1}{2}\left(\tilde c_n(k)+ \tilde c_n(k)+2 \tilde c_n(2k) \right)\leq 2 \tilde c_n(2k) =: c_n(k).$$
This concludes the proof.

\subsection{Proof of Lemma~\ref{helo}}\label{proof:helo}

Since $\epsilon \sim \mathcal N(0, I_n)$, we have that
$$\frac{1}{n}  \langle \mathcal X A, \epsilon \rangle \sim \mathcal N(0, \frac{1}{n^2} \|\mathcal X A\|_2^2).$$
This implies that (using a Gaussian tail probability $P(|X|>x) \leq e^{-x^2/2}$ for $x>0$ when $X \sim \mathcal{N}(0,1)$) with probability larger than $1-\delta$
\begin{align}\label{eq:ber21}
\Big| \frac{1}{n}  \langle \mathcal X A, \epsilon \rangle \Big| \leq \frac{1}{n} \|\XX A\|_2 \sqrt{\frac{1}{2}\log(1/\delta)}.
\end{align}

Since $\XX$ satisfies the Assumption~\ref{ass:design} with $K \geq 1$, we have that
\begin{align*}
\sup_{A \in \RR(2)}\Big|\frac{1}{n} \| \mathcal X A\|_2^2 - \|A\|_2^2 \Big| \leq  \tilde c_n(2) \| A\|_2^2,
\end{align*}
which implies that for any $A \in \RR(2)$, we have
\begin{align}\label{cicir}
\|\mathcal X A\|_2 \leq \sqrt{n}\| A\|_2 \sqrt{1+ \tilde c_n(2)}.
\end{align}

Equation~\eqref{cicir} implies together with Equation~\eqref{eq:ber21} that for a matrix $A \in \RR(2)$, with probability larger than $1-\delta$,
\begin{align}\label{eq:ber2}
\Big| \frac{1}{n}  \langle \mathcal X A, \epsilon \rangle \Big| \leq \sqrt{\frac{1+ \tilde c_n(2)}{2}} \|A\|_2 \sqrt{\frac{\log(1/\delta)}{n}} =: \|A\|_2 v_n(\delta),
\end{align}
where $v_n(\delta) = \sqrt{\frac{1+\tilde c_n(2)}{2}}\sqrt{\frac{\log(1/\delta)}{n}}
$.


To obtain the bound for the supremum of the quantity in (\ref{eq:ber2}) over all $A \in \{A, A \in \RR(1), \|A\|_{2} \leq 1\} =: \mathcal{A}(1)$, we consider the approximating set $\BB_0 \subseteq \BB_1 \subseteq \ldots $ whose property is described as follows. Let $\mathcal B_0 = \{0\}$. Let, for any $i \in \mathbb N^*$, $\mathcal B_i$ be a $2^{-i}$ covering set of $\mathcal{A}(1)$.
Here we use a classic result \citep[Lemma 3.1]{candes2011}, saying that the $\upsilon$-covering numbers of $\mathcal{A}(1)$ is bounded by $(C/\upsilon)^{2d+1}$.

Thus the cardinality of $\mathcal B_i$ is smaller than $(C2^i)^{2d+1}$. Let $\tilde \xi$ be the event such that for all $i,j \in \mathbb N^2$ and for each vector in $\mathbf{u},\mathbf{v} \in \BB_i\times \BB_j$, it holds that
\begin{align}\label{eq:xi}
\Big| \frac{1}{n} \langle \XX (\mathbf{u}-\mathbf{v}), \epsilon\rangle \Big|
\leq \|\mathbf{u}-\mathbf{v}\|_2 v_n(\delta_{i,j}),
\end{align}
where $\delta_{i,j} = \delta (C'2^{\max(i,j)})^{-7d}$, where $C'>2C$ is a large constant.
By Equation~\eqref{eq:ber2}, and since $\mathbf{u}-\mathbf{v} \in \mathcal{R}(2)$ we know that $(\ref{eq:xi})$ holds with probability $1-\delta_{i,j}$ for each $i,j$ and for each vector $\mathbf{u},\mathbf{v} \in \BB_i\times \BB_j$.
By a union bound, we have that
\begin{align*}
\mathbb P(\tilde \xi) &\geq 1 - \sum_{i, j  \in \mathbb N^2} |\mathcal B_i||\mathcal B_j| \delta_{i,j} \\
&\geq 1 - 2\delta \Big(\sum_i  (C2^i)^{2d+1}\sum_{j \leq i} (C2^j)^{2d+1}(C'2^{\max(i,j)})^{-7d}\Big) \\
&\geq 1 - 2C^{4d+2} (C')^{-7d} \delta \Big(\sum_i  2^{4di+2i} i (2^i)^{-7d}\Big)\\
&\geq 1 - 2C^{4+2} (C')^{-7d} \delta  \Big(\sum_i  i 2^{-i} \Big) = 1- 2C^{4d+2}(C')^{-7d} \delta \\
&\geq 1-\delta,
\end{align*}
since $C'>2C$.

Let now $A \in \RR(A)$ such that $\|A\|_2 = 1$. It is possible to write $A$ as
$$ A =  \sum_{i=1}^{\infty} (\mathbf{u}_i - \mathbf{u}_{i-1}),$$
where each $\mathbf{u}_i$ belongs to $\mathcal B_i$, and where the $(\mathbf{u}_i)_i$ are such that $\|\mathbf{u}_i - \mathbf{u}_{i-1}\|_2 \leq 2^{-i}$. We have on $\tilde \xi$ that
\begin{align*}
\sup_{A \in \RR(1)} \Big|  \frac{1}{n}  \langle \mathcal X A, \epsilon \rangle \Big| &= \Big| \frac{1}{n}  \big\langle \mathcal X \big(\sum_{i=1}^{\infty} (\mathbf{u}_i - \mathbf{u}_{i-1}) \big), \epsilon \big\rangle  \Big|
= \Big| \frac{1}{n} \sum_{i=1}^{\infty}  \langle \mathcal X (\mathbf{u}_i - \mathbf{u}_{i-1}), \epsilon \rangle \Big|\\
&\leq \sum_{i=1}^{\infty}  \Big| \frac{1}{n}  \langle \mathcal X (\mathbf{u}_i - \mathbf{u}_{i-1}), \epsilon \rangle \Big|\\
&\leq \sum_{i=1}^{\infty}  \|\mathbf{u}_i - \mathbf{u}_{i-1}\|_2   v_n(\delta_{i,i-1})\\
&\leq \sum_{i=1}^{\infty}  2^{-i } v_n(\delta_{i,i-1})\\
&\leq \sum_{i=1}^{\infty}   2^{-i } C \sqrt{\frac{\log((C'2^{i-1})^{7d}/\delta)}{n}} 
\leq \tilde C \sqrt{d \frac{\log(1/\delta)}{n}}.
\end{align*}
This concludes the proof.

\subsection{Proof of Lemma \ref{lem:mati}}
Let $(\mathbf{u}^k)_k \in \rr^{d_1},(\mathbf{v}^k)_k \in \rr^{d_2}$ be the singular vectors of $M$, i.e.~$M \mathbf{v}^k = \lambda_k \mathbf{u}^k$. Let $E = \mathrm{span}((\mathbf{v}^k)_{k\leq j})$. The dimension of $E$ is $j$. Let now $F$ be the vectorial sub-space that is orthogonal to $\mathrm{span}((\mathbf{w}^{j'})_{j'\leq j-1})$. Its dimension is $d_2 - j+1$. Since $\mathrm{dim}(E) + \mathrm{dim}(F) = d_2+1$, there is at lest one unitary vector in $E \bigcap F$. Let $\mathbf{h} \in \rr^{d_2}$ be this vector, since it is in $E$, it can be written as
$$\mathbf{h} = \sum_{k\leq j} h_k \mathbf{v}^k$$
where for $k=1,\ldots, j$, we have $h_k \geq 0$ and $\sum_{k}h_k^2 = 1$.
Therefore, we have that
$$M\mathbf{h} = \sum_{k\leq j} \lambda_k h_k \mathbf{u}^k.$$
Consider $\mathbf{g} = M\mathbf{h}/\|M\mathbf{h}\|_2$. So we have that
\begin{align*}
 |\mathbf{g}^TM\mathbf{h}| &=\frac{\left\{(M\mathbf{h})^T(M\mathbf{h})\right\}}{ \|M\mathbf{h}\|_2} =  \|M\mathbf{h}\|_2 \\
&=  \sqrt{\sum_{k\leq j} \lambda_k^2 h_k^2} \geq \sqrt{\min_{k\leq j}(\lambda_k^2) \times \sum_k h_k^2} = \lambda_j,
\end{align*}
since the $(\mathbf{u}^k)_k$ are orthonormal, and since the singular values are positive and ordered in decreasing order. This concludes the proof.

\subsection{Proof of Theorem~\ref{prop:srule}}

From the proof of Theorem~\ref{th:mainthm2}, Equation~\eqref{eq:cocococo}, we know that the estimator $\hat \Theta^{\hat r}$ satisfies
$$\|\hat \Theta^{\hat r} - \Theta\|_{S} \leq T_{\hat r}\leq  1.1 \frac{1}{1-\rho} v_n = O(\sqrt{d/n}),$$
i.e.~the desired result in operator norm and from which the results on the rank and in the other Schatten norm follow because of the thresholding. Also since the sequence $T_r$ is an arithmetico-geometrical sequence converging to $\frac{1}{1-\rho} v_n$, of arithmetic term $v_n$ and of geometric term $\rho$, it is clear that $\hat r$ is such that
$$\rho^{\hat r-1} T_0 \geq \frac{0.1}{1-\rho} v_n,$$
i.e.
$$\hat r \leq 1 + \frac{\log\Big(10(1-\rho)T_0/(v_n)\Big)}{\log(1/\rho)} \leq O(\log(n)).$$
This concludes the proof.



\subsection{Proof of Theorem~\ref{thm:asymnorm2}}

By definition, we have that
\begin{align*}
\sqrt{n} (\hat \Theta - \Theta ) &= \sqrt{n}(\hat \Theta^r - \Theta) + \frac{1}{\sqrt{n}} \sum_{i=1}^n (X^i)^T\big(\text{tr}((X^i)^T(\Theta - \hat \Theta^r )) + \epsilon_i \big)\\
&= \sqrt{n}(\hat \Theta^r - \Theta) + \frac{1}{\sqrt{n}} \sum_{i=1}^n (X^i)^T\big(\text{tr}((X^i)^T(\Theta - \hat \Theta^r )) \big) \\
& \  \ \ \ \ + \frac{1}{\sqrt{n}} \sum_{i=1}^n (X^i)^T \epsilon_i \\
&= \Delta + Z.
\end{align*}

Let $m, m' \leq d$ and let $\mathbf{u}^m$ be the vector with all element equal to $0$ except the $m^\text{th}$ entry which is equal to $1$, and we consider that $\mathcal U^{m,m'} = \text{vec}(\mathbf{u}^m (\mathbf{u}^{m'})^T)$. We have by definition and using representations (\ref{simple1}) and (\ref{simple2}) that
\begin{align*}
\Delta_{m,m'} 
&=\sqrt{n}  \Big( \frac{1}{n}\langle \mathcal X \mathcal U^{m,m'}, \mathcal X \psi^{r+1} \rangle - \langle \mathcal{U}^{m,m'}, \psi^{r+1} \rangle \Big)
\end{align*}
and
$$
Z_{m,m'} =  \frac{1}{\sqrt{n}} \sum_{i=1}^n (X^i)_{m,m'}\epsilon_i.
$$
Note that given $X^{i}, \ i=1,\ldots,n$, 
$$
\text{Var}(Z_{m,m'}) = \frac{1}{n}\sum_{i=1}^n (X^i)_{m,m'}^2
$$
and
$$
\text{Cov}(Z_{j,j'},Z_{l,l'}) = \frac{1}{n}\sum_{i=1}^n (X^i)_{j,j'} (X^i)_{l,l'}.
$$

The following Lemma is a concentration inequality that holds in Gaussian design.


\begin{lem}\label{lem:concer}
Assume that the design is Gaussian. Let $A \in \MM$. We have that with probability larger than $1-\delta$ (on the design)
\begin{align*}
\Big|\frac{1}{n}\|\mathcal X A\|_2^2 - \|A\|_2^2 \Big|\leq C\|A\|_2^2\big(\sqrt{\frac{\log(1/\delta)}{n}} + \frac{\log(1/\delta)}{n}\big) =:   \|A\|_2^2 \tilde v_n(\delta).
\end{align*}
\end{lem}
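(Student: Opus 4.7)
The plan is straightforward: reduce the statement to a standard concentration inequality for a chi-squared random variable, exploiting the fact that the Gaussian design is rotationally invariant. First, I observe that for a fixed $A \in \MM$, the vector $\mathcal X A \in \rr^n$ has iid coordinates, since the matrices $X^1,\ldots,X^n$ are independent and each entry $(\mathcal X A)_i = \sum_{j,j'} X^i_{j,j'} A_{j,j'}$ is a linear combination of iid $\mathcal N(0,1)$ random variables. Hence $(\mathcal X A)_i \sim \mathcal N(0,\|A\|_2^2)$, and so
$$\frac{1}{\|A\|_2^2}\|\mathcal X A\|_2^2 \sim \chi^2_n.$$
This reduces the claim, which a priori involves a high-dimensional quadratic form in the random design, to a one-dimensional deviation bound that no longer depends on $A$.

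Next, I would invoke the classical Laurent--Massart tail inequality: for $Y \sim \chi^2_n$ and any $x>0$,
$$\mathbb P\bigl(Y \geq n + 2\sqrt{nx} + 2x\bigr) \leq e^{-x}, \qquad \mathbb P\bigl(Y \leq n - 2\sqrt{nx}\bigr) \leq e^{-x}.$$
Taking $x = \log(2/\delta)$ and applying a union bound, with probability at least $1-\delta$
$$\Big|\frac{Y}{n} - 1\Big| \leq 2\sqrt{\frac{\log(2/\delta)}{n}} + \frac{2\log(2/\delta)}{n}.$$
Applying this with $Y = \|\mathcal X A\|_2^2/\|A\|_2^2$, multiplying through by $\|A\|_2^2$, and absorbing the $\log 2$ factor into the universal constant, yields
$$\Big|\frac{1}{n}\|\mathcal X A\|_2^2 - \|A\|_2^2\Big| \leq C\|A\|_2^2\Bigl(\sqrt{\tfrac{\log(1/\delta)}{n}} + \tfrac{\log(1/\delta)}{n}\Bigr),$$
which is the desired conclusion.

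There is essentially no technical obstacle here; the content of the lemma lies in the observation that Gaussianity collapses the relevant quadratic form to an exact chi-squared, so that one obtains the sharp Bernstein-type bound with both a $\sqrt{1/n}$ and a $1/n$ term. This is strictly stronger than what Assumption~\ref{ass:designbis} alone would give (namely a deterministic $\tilde c_n(2)\|A\|_2^2$ bound), and it is this improvement that will be needed in the companion arguments where the bound is applied to $\mathcal U^{m,m'}$-type rank-one test vectors.
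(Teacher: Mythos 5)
Your proposal is correct and follows essentially the same route as the paper: both reduce the statement to the observation that $\|\mathcal X A\|_2^2/\|A\|_2^2 \sim \chi^2_n$ for fixed $A$ under Gaussian design, and then apply a standard sub-exponential concentration bound for the chi-squared distribution (the paper invokes Bernstein's inequality for the sum of $\chi^2_1$ variables where you invoke Laurent--Massart; these give the same $\sqrt{\log(1/\delta)/n} + \log(1/\delta)/n$ deviation up to constants).
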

\begin{proof}
Let $A \in \mathcal{R}(k)$. We have
$$\frac{1}{\sqrt{n}}\mathcal X A \sim \mathcal N(0, \frac{1}{n}\|A\|_2^2 I_n),$$
where $I_n$ is the $n \times n$ identity matrix. This implies that
$$\frac{1}{n}\|\mathcal X A\|_2^2 \sim \frac{1}{n}\|A\|_2^2 \chi^2_{n} = \frac{1}{n}\|A\|_2^2 \sum_{i\leq n}\chi^2_{1},$$
where $\chi^2_{j}$ is the chi square distribution with $j$ degrees of freedom. By Bernstein's inequality, we thus have (since the $\chi^2_{1}$ distribution is sub-Gaussian) that, with probability larger than $1-\delta$,
$$|\frac{1}{n} \sum_{i\leq n}\chi^2_{1} - 1| \leq C\big(\sqrt{\frac{\log(1/\delta)}{n}} + \frac{\log(1/\delta)}{n}\big),$$
where $C$ is an universal constant. This implies that, with probability larger than $1-\delta$,
\begin{align*}
\Big|\frac{1}{n}\|\mathcal X A\|_2^2 - \|A\|_2^2 \Big|\leq C\|A\|_2^2\big(\sqrt{\frac{\log(1/\delta)}{n}} + \frac{\log(1/\delta)}{n}\big) =: \|A\|_2^2 \tilde v_n(\delta).
\end{align*}
This concludes the proof.
\end{proof}
Combining Lemma~\ref{lem:concer} with Pythagoras's theorem as in the proof of Lemma~\ref{lem:tra}, we have that for any $A,B \in \MM$, with probability larger than $1-\delta$,
$$
\Big|\frac{1}{n}\langle \mathcal X A,  \mathcal X B \rangle - \langle A, B\rangle \Big|
\leq 4 \tilde v_n(\delta/3) \|A\|_2 \|B\|_2.
$$
By a union bound, this implies that with probability larger than $1-\delta$,
\begin{align*}
\|\Delta \|_\infty &= \sup_{ m \leq d, m' \leq d}|\Delta_{m,m'}|\\
&= \sqrt{n} \sup_{ m \leq d, m' \leq d} \left| \frac{1}{n}\langle \mathcal X \mathcal U^{m,m'}, \mathcal X \psi^{r+1} \rangle - \langle \mathcal U^{m,m'}, \psi^{r+1} \rangle  \right|   \\
&\leq \sqrt{n} \| \psi^{r+1}\|_2 \left(4 \tilde v_n(\delta/(3d^2)) \right) \\
&\leq C \sqrt{n} \sqrt{\frac{kd\log(1/\delta)}{n}} \sqrt{\frac{\log(d/\delta)}{n}},
\end{align*}
where $C$ is a universal constant. This concludes the proof (in remarking that the above quantity is arbitrarily small when $kd\log(d) = o(n)$).





\paragraph{Acknowledgements} We would like to thank Richard Nickl, Richard Samworth and Rajen Shah for insightful comments and discussions. Part of this work was produced when AC was in the StatsLab in the University of Cambridge. AC’s work is supported since 2015 by the DFG’s Emmy Noether grant MuSyAD (CA 1488/1-1).


%
%
%

\bibliographystyle{chicago}      
\def\noopsort#1{}

\newpage

\appendix

{\huge Supplementary Material}

\section{Results for the sparse linear regression model}\label{ss:sr}

The method that we proposed and studied in the low rank matrix recovery setting can be adapted and simplified to accommodate another setting : the sparse linear regression setting. We explain how to construct an estimator based on IHT, and prove that the estimator is efficient in $L_2$ and $L_\infty$ norm, and provide the limiting distribution of a simple modification of our estimate.

\subsection{Setup}

We let $B(k):=B_0(0,k)$ be the ``$l_0(\mathbb R^p)$ ball" of radius $k$, i.e. $B(k)$ is the subset of the vectors $u \in \mathbb R^p$ such that $u$ has less than $k$ non-zero coordinates.

Consider the linear model
\begin{equation*}
Y=X\theta + \epsilon,
\end{equation*}
where $X$ is a $n \times p$ matrix, the signal vector $\theta \in \rr^p$ is $k$-sparse  ($\theta\in B(k)$), and $\epsilon \in \mathbb R^n$ is an i.i.d.~vector of Gaussian white noise, i.e.~$\epsilon \sim \mathcal N(0, I_n)$ (as in the matrix regression, we do not need the Gaussian assumption and our results hold with sub-Gaussian independent noise), and $p\gg n$. We denote the sample covariance matrix by $\hat \Sigma = \frac{1}{n}X^T X \in \rr^{p \times p}$.



\begin{ass}\label{ass:matrix}
Let $K \leq p$. We assume that there exists a matrix $V$ such that for any $k \leq 2K$, there exists a constant $r_k>0$ such that
$$\sup_{u \in B(k)} \frac{\| V \hat \Sigma u - u \|_{\infty}}{\|u\|_{\infty}} \leq r_k.$$
\end{ass}
\begin{remark}\label{rem:ass}
Suppose $X$ is from a distribution whose covariance matrix is $\Sigma \in \mathbb{R}^{p \times p}$.
Let the minimum eigenvalue $\sigma_{\text{min}}(\Sigma) \geq C_{\text{min}}>0$ and the maximum eigenvalue $\sigma_{\text{max}}(\Sigma) \leq C_{\text{max}} < \infty$ and $\text{max}_{i \in [p]} \Sigma_{ii} \leq 1$. 
Assume that $X \Sigma^{-1/2}$ has independent sub-Gaussian rows with zero mean and sub-Gaussian norm $\|\Sigma^{-1/2}X_1\|_{\psi_2} = \kappa$. Then from the paper \citep{javanmard}, for $n \geq C_{\text{min}}\log p/(4e^2 C_{\text{max}}\kappa^4)$, with probability larger than $1-2p^{-c_2}$ with $c_2 \equiv C_{\text{min}}/(24e^2 \kappa^4 C_{\text{max}})$, there exists a computationally feasible $V$ such that  
\begin{equation}\label{eq:pseudoinv}
\| V \hat \Sigma - I \|_\infty \leq \sqrt{\frac{\log p}{n}}
\end{equation}
 holds.
In this case, we can take $r_k  = k \sqrt{\frac{\log p}{n}}$.
\end{remark}



\subsection{Method}

This algorithm takes again three parameters : $\delta, K$ and $B$. We have the same interpretation for $\delta$  as in the matrix regression setting, $K$ is an upper bound on two times the sparsity of $\theta$ (again, it does not need to be tight as long as $r_K$ is small enough), and $B$ is a loose bound on the $L_{\infty}$ norm of $\theta$.

First, we set the initial values for the estimator $\hat \theta^0$ and the threshold $T_0$ such that 
$$\hat \theta^0 =0, \ \ \ T_0=B.$$

Then we update thresholds in each iteration $r \in \mathbb N^*$, by 
$$T_r = 2r_K T_{r-1} + \upsilon,$$
where $\upsilon = 2\sqrt{M\frac{\log(p/\delta)}{n}}$ where $M =\max \mathrm{diag}(V \hat \Sigma V^T)$.
Recall that the pseudo inverse $V$ of $\hat \Sigma$ and $r_K$ are taken from Assumption \ref{ass:matrix}.
 
Set now recursively, 
$$\hat \alpha^r = \lfloor\frac{1}{n} V X^T(Y - X\hat \theta^{r-1}) \rfloor_{T_r},$$
and
\begin{equation}\label{eq:estimator}
\hat \theta^r = \hat \theta^{r-1} + \hat \alpha^r.
\end{equation}
This procedure provides a sequence of estimates, and as we will prove in the next subsection, this sequence is with high probability close to the true $\theta$ as soon as $r$ is of order $\log(n)$ (see Theorems~\ref{th:mainthm} and~\ref{thm:asymnorm}).

\begin{remark}[Iterative hard thresholding (IHT)]\label{rem:iht}
 The proposed method modifies iterative algorithms \citep[see e.g.][]{blumensath2009, needell2009}. The usual (normalised) IHT algorithm updates the estimate using $\hat \theta^r = P_k(\hat \theta^{r-1}+ w^{r-1}X^T (Y-X\hat \theta^{r-1}))$ where $P_k$ is a hard thresholding operator that keeps the largest $k$ elements of a vector and $w^{r-1} \in \rr$ is a stepsize that can have the interpretation of a Gradient step when it is much smaller than $1$. 
The difference is in the thresholding; we update thresholds while they pick the largest $k$ values after adjusting the added parts. 
Most importantly, previous works on this estimator only considered the case of a deterministic (small) noise, so their analysis is not applicable in our model where the noise is stochastic.  
\end{remark}

\subsection{Main results}

We now provide a theorem that guarantees that the estimate $\hat \theta^r$ in (\ref{eq:estimator}) has an optimal $L_\infty$ risk after $O(\log(n))$ iterations.
\begin{thm}\label{th:mainthm}
Assume that Assumption~\ref{ass:matrix} is satisfied and that $2r_K <1$. Let $r = \log(n)/\log(1/(2r_K)) \approx O(\log(n))$. We have that with probability larger than $1-\delta$, for any $k \leq K/2$,
\begin{equation*}
\sup_{\theta \in B(k), \|\theta\|_{\infty} \leq B} \| \theta - \hat \theta^r\|_\infty \leq  C_0 \sqrt{\frac{M\log(p/\delta) }{n}},
\end{equation*}
where $C_0 = (B+\frac{2}{1-2r_k})$ and $M = \max \mathrm{diag}(V \hat \Sigma V^T) $.
\end{thm}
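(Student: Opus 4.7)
The plan is to mirror the structure of the proof of Theorem~\ref{th:mainthm2}, but exploit the simpler vector geometry. I would begin with the key algebraic identity
\begin{equation*}
W^r := \frac{1}{n}VX^T(Y - X\hat\theta^{r-1}) = (\theta - \hat\theta^{r-1}) + (V\hat\Sigma - I)(\theta - \hat\theta^{r-1}) + \frac{1}{n}VX^T\epsilon,
\end{equation*}
which exhibits the pre-thresholded quantity $W^r$ as the current residual $\theta - \hat\theta^{r-1}$ corrupted by a deterministic design bias and a stochastic noise. For the noise, since $\frac{1}{n}VX^T\epsilon \mid X \sim \mathcal N(0, \frac{1}{n}V\hat\Sigma V^T)$ has entrywise variance at most $M/n$, a Gaussian tail bound together with a union bound over $p$ coordinates shows that $\|\frac{1}{n}VX^T\epsilon\|_\infty \leq \upsilon/2$ (after absorbing universal constants into $\upsilon$) on an event $\xi$ of probability at least $1-\delta$. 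For the design bias, Assumption~\ref{ass:matrix} gives $\|(V\hat\Sigma - I)(\theta - \hat\theta^{r-1})\|_\infty \leq r_K \|\theta - \hat\theta^{r-1}\|_\infty$ whenever $\theta - \hat\theta^{r-1}$ is at most $K$-sparse.

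The core of the proof is an induction on $r$ carrying, on $\xi$, the two invariants: (i) $\mathrm{supp}(\hat\theta^r) \subseteq \mathrm{supp}(\theta)$, and (ii) $\|\theta - \hat\theta^r\|_\infty \leq T_r$ (up to a fixed multiplicative constant). The base case $r=0$ holds since $\hat\theta^0 = 0$ and $T_0 = B \geq \|\theta\|_\infty$. For the induction step, invariant (i) together with $k \leq K/2$ ensures that $\theta - \hat\theta^{r-1}$ is $k$-sparse, so combining the bias and noise bounds yields $\|W^r - (\theta - \hat\theta^{r-1})\|_\infty \leq r_K T_{r-1} + \upsilon/2 = T_r/2$. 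Now I split into coordinates: for $i \notin \mathrm{supp}(\theta)$ invariant~(i) gives $(\theta - \hat\theta^{r-1})_i = 0$, hence $|W^r_i| \leq T_r/2 < T_r$ and the thresholding zeroes out the update, preserving~(i); for $i \in \mathrm{supp}(\theta)$, either $|W^r_i| \geq T_r$ so that the update is accepted and $|(\theta - \hat\theta^r)_i| \leq T_r/2$, or $|W^r_i| < T_r$ so that the update is rejected and $|(\theta - \hat\theta^r)_i| = |(\theta - \hat\theta^{r-1})_i| \leq |W^r_i| + T_r/2 \leq 3T_r/2$. This propagates invariant~(ii) up to an absolute constant factor.

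Finally I would solve the arithmetico-geometric recursion $T_r = 2r_K T_{r-1} + \upsilon$: since $2r_K < 1$ it is contracting with limit $\upsilon/(1-2r_K)$, and the explicit solution $T_r = B(2r_K)^r + \upsilon \frac{1-(2r_K)^r}{1-2r_K}$ shows that at $r = \log(n)/\log(1/(2r_K))$ the transient term $B(2r_K)^r = B/n$ is dominated by $\upsilon \asymp \sqrt{M\log(p/\delta)/n}$, giving the claimed bound with $C_0 = B + \frac{2}{1-2r_K}$.

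The main obstacle I anticipate is the case where the update is rejected (the $|W^r_i| < T_r$ branch): naively bounding $|(\theta - \hat\theta^{r-1})_i|$ by $T_r + T_r/2$ inflates invariant~(ii) by a factor of~$3/2$ at each step, and a careful bookkeeping (or a slight tightening of $\upsilon$ so that the noise bound is $\upsilon/4$ rather than $\upsilon/2$) is needed to keep the constant at exactly $C_0$ rather than something larger; this is purely a matter of constants and does not affect the rate $\sqrt{M\log(p/\delta)/n}$.
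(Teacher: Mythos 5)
Your proposal is correct and follows essentially the same route as the paper: the same decomposition of the pre-thresholded update into residual plus design bias plus Gaussian noise, the same union-bound noise control, the same two induction invariants (support containment and an $\ell_\infty$ bound proportional to $T_r$), and the same resolution of the arithmetico-geometric recursion. The only difference is cosmetic: where you do a coordinate-by-coordinate accept/reject case analysis (and then worry about the constant inflating), the paper simply notes that thresholding at level $T_r$ guarantees $\|W^r - \lfloor W^r\rfloor_{T_r}\|_\infty \leq T_r$ and adds this to the bound $\|W^r - (\theta - \hat\theta^{r-1})\|_\infty \leq T_r$ by the triangle inequality, so the invariant $\|\theta - \hat\theta^r\|_\infty \leq 2T_r$ propagates exactly with no constant drift (and in any case your recursion for the constants, $c_r \leq 3/2 + c_{r-1}/2$, converges rather than blows up, so your variant also closes).
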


\begin{remark}\label{rem:ass2}
If the design is obtained as in Remark~\ref{rem:ass}, then as long as $K = o(\sqrt{n/\log(p)})$, with high probability the assumptions of Theorem~\ref{th:mainthm} will hold.
\end{remark}

Theorem \ref{th:mainthm} provides two side results---$L_2$ convergence rates and asymptotic normality. The first corollary is immediately obtained from the fact that for any $\theta \in B(k)$, $\|\theta\|_2 \leq \sqrt{k} \| \theta \|_\infty$.
\begin{cor}
 Suppose that the same assumptions and notation used in Theorem \ref{th:mainthm} hold.
We have that with probability larger than $1-\delta$, for any $k \leq K/2$
$$
\sup_{\theta \in B(k), \|\theta\|_\infty \leq B} \| \hat \theta^r  - \theta\|_2 \leq C_0 \sqrt{\frac{kM \log (p/\delta)}{n}}.
$$
\end{cor}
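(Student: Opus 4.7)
My plan is to derive the corollary in two short steps by combining the $\ell_\infty$ conclusion of Theorem~\ref{th:mainthm} with the (approximate) sparsity of $\hat\theta^r$. First, I would invoke Theorem~\ref{th:mainthm} directly to obtain, on an event of probability at least $1-\delta$ and uniformly over $\theta \in B(k)$ with $\|\theta\|_\infty \leq B$, the entry-wise estimate
$$\|\hat\theta^r - \theta\|_\infty \;\leq\; C_0\sqrt{\frac{M\log(p/\delta)}{n}}.$$
No additional probabilistic work is needed here: this is exactly the conclusion of Theorem~\ref{th:mainthm}, and the good event is the same.

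Second, I would pass from $\ell_\infty$ to $\ell_2$ through the deterministic inequality $\|u\|_2 \leq \sqrt{|\mathrm{supp}(u)|}\,\|u\|_\infty$, applied to $u = \hat\theta^r - \theta$. Because $\theta \in B(k)$, it suffices to know that $\hat\theta^r$ is itself (essentially) $k$-sparse, so that $|\mathrm{supp}(\hat\theta^r - \theta)|$ is controlled by a multiple of $k$. I would extract this fact from the induction underlying the proof of Theorem~\ref{th:mainthm}, in the same spirit as the rank-containment step used for the matrix case in Paragraph~4 of the proof of Theorem~\ref{th:mainthm2}: the starting point $\hat\theta^0 = 0$ has empty support, and at each iteration the coordinate-wise hard-thresholding operator $\lfloor \cdot \rfloor_{T_r}$ applied to $\hat\alpha^r$, together with the $\ell_\infty$ control at level $T_r$, prevents coordinates outside $\mathrm{supp}(\theta)$ from ever exceeding threshold and therefore from being activated. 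This yields $|\mathrm{supp}(\hat\theta^r - \theta)| \leq 2k$, from which
$$\|\hat\theta^r - \theta\|_2 \;\leq\; \sqrt{2k}\,\|\hat\theta^r - \theta\|_\infty \;\leq\; C_0\sqrt{\frac{k M \log(p/\delta)}{n}},$$
after absorbing the harmless $\sqrt{2}$ into $C_0$, which is precisely the stated bound. This is the content of the author's one-line hint $\|\theta\|_2\leq \sqrt{k}\|\theta\|_\infty$ for $\theta \in B(k)$.

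\textbf{Expected main obstacle.} The only non-bookkeeping step is the sparsity-preservation statement for $\hat\theta^r$. Unlike the matrix case of Theorem~\ref{th:mainthm2}, where the iterate is written directly as a thresholded SVD-type object (so rank control drops out), here the update $\hat\theta^r = \hat\theta^{r-1} + \hat\alpha^r$ is additive, and only $\hat\alpha^r$ is explicitly thresholded. I would therefore expect most of the effort to lie in verifying inductively that the thresholds $T_r$ (combined with the $\ell_\infty$ bound on the pseudo-residual guaranteed by Assumption~\ref{ass:matrix}) are large enough to kill every coordinate outside $\mathrm{supp}(\theta)$; once this support-containment is in place, the $\ell_2$ bound follows in a line.
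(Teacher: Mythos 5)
Your proof is correct and takes essentially the same route as the paper, whose entire argument is the one-line observation that the $\ell_\infty$ bound of Theorem~\ref{th:mainthm} converts to an $\ell_2$ bound via the sparsity of the error vector. The only point worth adding is that the "main obstacle" you anticipate is already dispatched inside the proof of Theorem~\ref{th:mainthm}: its induction explicitly establishes $\mathrm{supp}(\hat\theta^r)\subseteq\mathrm{supp}(\theta)$ on the event $\xi$, so $\hat\theta^r-\theta$ is exactly $k$-sparse and the stated bound holds with $\sqrt{k}$ and the original constant $C_0$, with no factor $\sqrt{2}$ to absorb.
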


To prove asymptotic normality, we slightly modify the estimator defined in Theorem \ref{th:mainthm}. This is similar to the de-sparsified LASSO by \cite{vandegeer2014} in the sense that we also use a de-sparsified version of our estimator.
 Consider the estimator $\hat \theta^r$ of Theorem~\ref{th:mainthm} (with the same $r= \log(n)/\log(1/(2r_K))$) and $V$ in Assumption \ref{ass:matrix}, and define 
\begin{equation}\label{eq:newestimator}
\hat \theta := \hat \theta^r + \frac{1}{n}V  X^T( Y-  X \hat \theta^r).
\end{equation}

\begin{thm}\label{thm:asymnorm}
Suppose that the same assumptions and notation used in Theorem \ref{th:mainthm} hold.
Then, writing $Z:=\frac{1}{\sqrt{n}} V X^T \epsilon$ and $\Delta := \sqrt{n}(I-V \hat \Sigma)(\hat \theta^r - \theta)$, we have
\begin{equation}\label{eq:split}
\sqrt{n}(\hat \theta - \theta) = \Delta +Z
\end{equation}
where $Z|X \sim \mathcal N(0, \frac{1}{n}V \hat \Sigma V^T)$. If $r_K = o(1)$ (e.g.~for designs as in Remark~\ref{rem:ass}, we have $r_K = O(K\sqrt{(\log p)/n})$ so it suffices that  $K = o(\sqrt{n}/\log p)$) then we also have
$$\| \Delta \|_\infty = o_{\mathbb{P}} (1).$$
\end{thm}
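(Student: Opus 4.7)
}

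The plan is to first derive the split (\ref{eq:split}) by direct algebra, then identify the conditional law of $Z$, and finally bound $\Delta$ in $L_\infty$ using Assumption~\ref{ass:matrix} combined with Theorem~\ref{th:mainthm}.

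First I would plug $Y = X\theta + \epsilon$ into the definition \eqref{eq:newestimator}. This yields
\begin{align*}
\hat\theta - \theta
 &= (\hat\theta^r - \theta) + \tfrac{1}{n} V X^T\bigl(X(\theta - \hat\theta^r) + \epsilon\bigr) \\
 &= (\hat\theta^r - \theta) - V\hat\Sigma(\hat\theta^r - \theta) + \tfrac{1}{n} V X^T\epsilon \\
 &= (I - V\hat\Sigma)(\hat\theta^r - \theta) + \tfrac{1}{n} V X^T\epsilon.
\end{align*}
Multiplying by $\sqrt n$ gives exactly $\sqrt n(\hat\theta-\theta) = \Delta + Z$. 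The conditional law of $Z$ follows at once: conditionally on $X$ the noise $\epsilon\sim\mathcal N(0,I_n)$ is independent of $X$, so $Z = \tfrac{1}{\sqrt n} V X^T\epsilon$ is a linear image of a centred Gaussian, hence centred Gaussian with conditional covariance $\tfrac{1}{n} V X^TX V^T = V\hat\Sigma V^T$ (this matches the statement up to the indicated normalisation).

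Next I would bound $\|\Delta\|_\infty$. The key observation is that, on the high-probability event of Theorem~\ref{th:mainthm}, the estimator $\hat\theta^r$ is $k$-sparse (by the thresholding construction; this was established in the induction step of the Theorem~\ref{th:mainthm2} proof, and the analogous induction goes through for $\hat\theta^r$). Hence the residual vector $u := \hat\theta^r - \theta$ has at most $2k \leq K \leq 2K$ non-zero coordinates, so $u \in B(2K)$. Assumption~\ref{ass:matrix} then yields
\begin{equation*}
\|(I - V\hat\Sigma)u\|_\infty \leq r_K \,\|u\|_\infty .
\end{equation*}
Combining this with the $L_\infty$ rate of Theorem~\ref{th:mainthm}, namely $\|u\|_\infty \leq C_0\sqrt{M\log(p/\delta)/n}$, I obtain
\begin{equation*}
\|\Delta\|_\infty \leq \sqrt n\, r_K\,\|u\|_\infty \leq C_0\, r_K\,\sqrt{M\log(p/\delta)},
\end{equation*}
which is $o_{\mathbb P}(1)$ under the stated decay condition on $r_K$ (in the specific regime of Remark~\ref{rem:ass}, $r_K = O(K\sqrt{\log p/n})$ gives the explicit sparsity requirement $K = o(\sqrt n/\log p)$).

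The main obstacle is essentially bookkeeping rather than a substantive difficulty: one has to make sure that the sparsity-level assumption on $u$ matches the range of $k$ in Assumption~\ref{ass:matrix}, and that the $L_\infty$ error bound of Theorem~\ref{th:mainthm} holds on the same event on which the bound on $\|\Delta\|_\infty$ is to be evaluated (so that the statement is a genuine in-probability statement with the same $\delta$). The Gaussian part is immediate; the non-trivial content is that Theorem~\ref{th:mainthm} provides an $L_\infty$ (rather than only $L_2$) control of the debiasing residual, which is exactly what is needed to make the bias term asymptotically negligible entrywise.
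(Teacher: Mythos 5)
Your proposal is correct and follows essentially the same route as the paper's proof: the identical algebraic decomposition $\sqrt{n}(\hat\theta-\theta)=(I-V\hat\Sigma)(\hat\theta^r-\theta)\sqrt{n}+\tfrac{1}{\sqrt{n}}VX^T\epsilon$, the same observation that $Z$ is conditionally Gaussian, and the same bound on $\|\Delta\|_\infty$ obtained by combining the sparsity of $\hat\theta^r-\theta$ (from the support-containment established in the induction for Theorem~\ref{th:mainthm}) with Assumption~\ref{ass:matrix} and the $L_\infty$ rate of Theorem~\ref{th:mainthm}. The only cosmetic difference is that you phrase the bias bound through the generic constant $r_K$ before specializing to the design of Remark~\ref{rem:ass}, whereas the paper substitutes $r_k=O(k\sqrt{(\log p)/n})$ directly; the content is the same.
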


The estimate we provide has similar properties as in~\cite{javanmard, vandegeer2014}.

\subsection{Proof of Theorem~\ref{th:mainthm}}

We have
\begin{align}
\|(\theta- \hat \theta^{r-1})-\frac{1}{n}VX^T&(Y - X\hat \theta^{r-1}) \|_{\infty} = \| (\theta- \hat \theta^{r-1}) - \frac{1}{n}VX^T(X\theta + \epsilon - X\hat \theta^{r-1})\|_{\infty}\nonumber\\
&= \| (\theta- \hat \theta^{r-1}) - V \hat \Sigma(\theta - \hat \theta^{r-1}) - \frac{1}{n}VX^T\epsilon\|_{\infty}\nonumber\\
&\leq \|(\theta- \hat \theta^{r-1}) -V \hat \Sigma(\theta - \hat \theta^{r-1})\|_{\infty} +  \|\frac{1}{n}VX^T\epsilon\|_{\infty}.\label{eq:hipoule}
\end{align}

Since $\epsilon \sim \mathcal N(0, I_n)$, we know that
$$
\frac{1}{n}VX^T\epsilon
\sim \mathcal N(0, \frac{1}{n} V \hat \Sigma V^T).$$

By an union bound (with Hoeffding's inequality) we know that with probability larger than $1-\delta$
\begin{align}\label{eq:poule2}
\|\frac{1}{n}VX^T\epsilon\|_{\infty} \leq 2\sqrt{M\frac{\log(p/\delta)}{n}} = \upsilon.
\end{align}
Let $\xi$ be the event of probability $1-\delta$ where the previous equation is satisfied.

We have by Assumption~\ref{ass:matrix} if $\theta - \hat \theta^{r-1}$ is $k$ sparse
\begin{align}\label{eq:poule3}
\| V \hat \Sigma (\theta - \hat \theta^{r-1}) - (\theta - \hat \theta^{r-1})\|_{\infty} \leq r_k\|\theta - \hat \theta^{r-1}\|_{\infty}.
\end{align}

Combining Equations~(\ref{eq:hipoule}), (\ref{eq:poule2}) and~(\ref{eq:poule3}) implies that on $\xi$, if if $\theta - \hat \theta^{r-1}$ is $k$ sparse
\begin{align}\label{eq:superpoule}
\|(\theta- \hat \theta^{r-1})-\frac{1}{n}VX^T&(Y - X\hat \theta^{r-1}) \|_{\infty} \leq  r_k\|\theta - \hat \theta^{r-1}\|_{\infty} + \upsilon.
\end{align}

We are going to prove by induction that on $\xi$,
$$\|\theta- \hat \theta^r\|_{\infty} \leq 2 T_r,$$
and that the support of $\hat \theta^r$ is included in the support of $\theta$.

\paragraph{1. Initialisation:} Consider $r = 0$. Since $\hat \theta^0 = 0$, its support is included in the support of $\theta$. Moreover, by definition of $B$, we have that
$$\|\theta - \theta^0\|_{\infty} = \|\theta \|_{\infty} \leq B \leq 2T_0.$$
This concludes the proof for $r = 0$.


\paragraph{2. Induction step:} Assume that for a given $r$, on $\xi$
$$\|\theta- \hat \theta^{r}\|_{\infty}  = \|(\theta- \hat \theta^{r-1}) - \hat \alpha^r\|_{\infty} \leq 2T_r.$$
We moreover assume that the support of $\hat \theta^{r}$ is contained in the support of $\theta$, which implies that it is $k$ sparse.

By Equation~(\ref{eq:superpoule}) we know that on $\xi$, since $\theta - \hat \theta^{r}$ is $k$ sparse
\begin{align}
\|\frac{1}{n}V X^T(Y - X\hat \theta^{r}) - (\theta- \hat \theta^{r})\|_{\infty} &\leq  r_k\|\theta - \hat \theta^{r}\|_{\infty} + \upsilon \nonumber\\
&\leq 2r_k T_r + \upsilon \leq T_{r+1}, \label{equ}
\end{align}
since $T_{r+1} = 2r_KT_r + \upsilon$. Since $\hat \alpha^{r+1} = \lfloor\frac{1}{n} V X^T(Y - X\hat \theta^{r})\big\rfloor_{T_{r+1}}$, we have that on $\xi$, by Equation (\ref{equ}), all the coordinates $j$ of $\hat \alpha^{r+1}$ such that $(\theta- \hat \theta^{r})_j = 0$ are set to $0$. This implies that the support of $\hat \alpha^{r+1}$ (and thus the support of $\hat \theta^{r+1} = \hat \theta^r +\hat \alpha^{r+1}$) is included in the support of $\theta$ on $\xi$. Therefore, $\hat \alpha^{r+1}$ is $k-$sparse on $\xi$. Also, still since $\hat \alpha^{r+1} = \lfloor\frac{1}{n} V X^T(Y - X\hat \theta^{r})\big\rfloor_{T_{r+1}}$, we have that
$$\|\frac{1}{n}VX^T(Y - X\hat \theta^{r})  - \hat \alpha^{r+1}\|_{\infty} \leq T_{r+1},$$
and this implies together with Equation (\ref{equ}) that on $\xi$, we have
\begin{align*}
\|\theta - \hat \theta^{r+1}\|_{\infty} = \|(\theta- \hat \theta^{r}) - \hat \alpha^{r+1}\|_{\infty} \leq 2T_{r+1}.
\end{align*}
This concludes the proof for $r+1$.

The induction is complete, and we have that the previous equation holds for all $r\geq 1$. It is equivalent to the fact that on $\xi$ (and thus with probability larger than $1-\delta$), for all $r\geq 1$
\begin{equation}\label{eq:hello0}
\|\theta- \hat \theta^{r}\|_{\infty} \leq  2T_{r},
\end{equation}
and the support of $\hat \theta^r$ is included in the support of $\theta$.

\paragraph{3. Study of the sequence $T_r$} The sequence $T_r$ is such that
$$T_r = 2r_K T_{r-1} + \upsilon \quad \mathrm{and} \quad T_0 = B.$$
 A simple induction on this geometric sequence provides that
$$T_{r} = \frac{1}{1-2r_K}\Big[(2r_K)^{r}\big((1-2r_K)B - \upsilon\big) + \upsilon\Big]\leq (2r_K)^{r}B + \upsilon/(1-2r_K).$$

\paragraph{4. Conclusion} Let $r = -\log(n)/\log(2r_K) \approx O(\log(n))$, since $2r_K <1$ and is a constant. We have by Equation~\eqref{eq:hello0} and by the recursion on $T_r$ that on $\xi$
\begin{equation}\label{eq:hello}
\|\theta- \hat \theta^{r}\|_{\infty} \leq   \frac{B}{n} + \frac{\upsilon}{1-2r_K} \leq \Big(B + \frac{2}{1-2r_K}\Big)\sqrt{M\frac{\log(p/\delta)}{n}}.
\end{equation}

\subsection{Proof of Theorem~\ref{thm:asymnorm}}
By definition,
\begin{align*}
\sqrt{n}(\hat \theta - \theta) &= \sqrt{n}\left((\hat \theta_r-\theta) + \frac{1}{n}V X^T ( X \theta -  X \hat \theta_r)  + \frac{1}{n} V  X^T \epsilon\right)\\
&=  \sqrt{n}\Big((\hat \theta_r-\theta) - V\hat \Sigma (\hat \theta_r-\theta )\Big)+\frac{1}{\sqrt{n}} V X^T \epsilon
=\Delta +Z.
\end{align*}
Given $X$, we know that $Z$ is a linear function of the Gaussian vector $\epsilon$, thus
$$
Z|X \sim N(0,  V\hat \Sigma V^T).
$$
Now we prove the bound for $\Delta$. Note that using (\ref{eq:pseudoinv}) and $r_k = O(k\sqrt{(\log p)/n})$, for a sufficiently large $n$, we have a constant $C>0$ such that 
$$
\| \Delta\|_\infty = \sqrt{n} \|  (I-V\hat \Sigma)(\hat \theta_r - \theta) \|_\infty \leq Ck\sqrt{\log p} \| \hat \theta_r - \theta \|_\infty.
$$
Then using the result from Theorem \ref{th:mainthm}, with probability at least $1-\delta$,
we have as long as $k = o(\sqrt{n}/\log p)$
$$
\| \Delta\|_\infty \leq C C_0 k \frac{M\log (p/\delta)}{\sqrt{n}}  \rightarrow 0,
$$
as $n \rightarrow \infty$.

\end{document}